\newtheorem{myclm}{Claim}
\newtheorem{mythm}{Theorem} \numberwithin{mythm}{section} 
\newtheorem{mycor}[mythm]{Corollary}
\newtheorem{mylem}[mythm]{Lemma} 
\newtheorem{myquest}[mythm]{Question}
\newcommand{ \N } { \mathbb{N} }
\newcommand{\w}{\omega}
\newcommand{\wstar}{\omega^*}
\newcommand{\Cstar}{C^*}
\newcommand{\cont}{\mathfrak{c}}
\newcommand{\script}{\mathcal}
\newcommand{\parentheses}[1]{{\left( {#1} \right)}}
\newcommand{\p}{\parentheses}
\newcommand{\closure}[1]{\overline{#1}}
\newcommand{\closureIn}[2]{\closure{#1}^{#2}}
\newcommand{\Set}[1]{{\left\lbrace {#1} \right\rbrace}}
\newcommand{\singleton}{\Set}
\newcommand{\Union}{\bigcup}
\newcommand{\cardinality}[1]{{\left\lvert {#1} \right\rvert}}
\newcommand{\clopens}[1]{\script{CO}\p{#1}}
\newcommand{\cut}[1]{\script{C}\p{#1}}
\newcommand{\pair}[1]{\langle {#1} \rangle}
\def\set#1:#2{\Set{{#1} \colon {#2}}}
\newcommand{\continuum}{\mathfrak{c}}
\renewcommand{\subset}{\subseteq}
\renewcommand{\supset}{\supseteq}
\begin{document}


\renewcommand{\bf}{\bfseries}
\renewcommand{\sc}{\scshape}

\title{Compactifications of $\wstar \setminus \singleton{x}$ and $S_\kappa \setminus \singleton{x}$}

\author[M.\,Pitz]{Max F.\ Pitz}
\address[Pitz]{Mathematical Institute\\University of Oxford\\Oxford OX2 6GG\\United Kingdom}
\email[Corresponding author]{pitz@maths.ox.ac.uk}
\thanks{The first author acklowledges support of the German Academic Exchange Service (DAAD)}

\author[R.\,Suabedissen]{Rolf Suabedissen}
\address[Suabedissen]{Mathematical Institute\\University of Oxford\\Oxford OX2 6GG\\United Kingdom}
\email{suabedis@maths.ox.ac.uk}

\subjclass[2010]{Primary	54D40; Secondary 54D35, 54G05, 06E15}

\keywords{Stone-\v{C}ech compactification, Parovi\v{c}enko, $\kappa$-Parovi\v{c}enko, $\kappa$-saturated Boolean algebra, monotone $F$-space}

\begin{abstract}
The title refers to the Stone-\v{C}ech remainder of the integers $\wstar$ and the Stone space $S_\kappa$ of the $\kappa$-saturated Boolean algebra of cardinality $\kappa$. The latter space is characterised topologically as the unique $\kappa$-Parovi\v{c}enko space of weight $\kappa$. It exists if and only if the consistent and independent equality $\kappa = \kappa^{<\kappa}$ holds. The spaces $S_\kappa$ are generalisations of the space $\wstar$: under the Continuum Hypothesis, $S_{\w_1}$ is homeomorphic to $\wstar$.

In this paper we investigate compactifications of spaces $S_\kappa \setminus \singleton{x}$, building on and extending corresponding results obtained by Fine \& Gillman and Comfort \& Negrepontis for the space $\wstar$. 

We show that for every point $x$ of $S_\kappa$, the Stone-\v{C}ech remainder of $S_\kappa \setminus \singleton{x}$ is a $\kappa^+$-Parovi\v{c}enko space of cardinality $2^{2^\kappa}$ which admits a family of $2^\kappa$ disjoint clopen sets. As a corollary we get that it is consistent with CH that the Stone-\v{C}ech remainders of $\omega^* \setminus \{x\}$ are all homeomorphic.
\end{abstract}

\maketitle

\section{\bf Introduction}
The purpose of this paper is to study compactifications of subspaces of the Stone-\v{C}ech remainder of the integers $\wstar = \beta \w \setminus \w$, namely subspaces of the form $\wstar \setminus \singleton{x}$. The questions discussed in this paper are motivated by two classical theorems about extensions of real-valued continuous functions on $\wstar \setminus \singleton{x}$.

\begin{mythm}[Fine and Gillman, 1960 \cite{fine,Gillmann}]
\label{thm1.1}
Assuming the Continuum Hypothesis, for every point $x \in \wstar$ there are continuous bounded real-valued functions on $\wstar \setminus \singleton{x}$ that cannot be continuously extended to $\wstar$.
\end{mythm}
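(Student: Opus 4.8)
The plan is to show that for every $x$ the dense subspace $Y := \wstar \setminus \singleton{x}$ fails to be $\Cstar$-embedded in $\wstar$. Since $\singleton{x}$ is nowhere dense (as $\wstar$ has no isolated points), $\wstar$ is a compactification of $Y$, and every bounded continuous real-valued function on $Y$ extends continuously to $\beta Y$; so it suffices to produce one such function that does \emph{not} extend over the single remainder point $x$. By the standard criterion for $\Cstar$-embedding it is enough to partition $Y = U \union V$ into two relatively clopen pieces with $x \in \closure{U} \intersect \closure{V}$ (closures taken in $\wstar$): then the indicator $\mathds{1}_U$ is a bounded continuous function on $Y$ with no limit at $x$, hence with no continuous extension to $\wstar$.

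Under CH we have $\continuum = \w_1$. As $\wstar$ has weight $\continuum$ and no point is a $G_\delta$, every point has character exactly $\w_1$; so fix a clopen neighbourhood base $\Set{A_\alpha^* : \alpha < \w_1}$ at $x$ with $\intersect_\alpha A_\alpha^* = \singleton{x}$, where $A_\alpha^* = \closure{A_\alpha} \intersect \wstar$ for $A_\alpha \subseteq \w$. Because the clopen algebra of $\wstar$ has size $\continuum = \w_1$, a recursion of length $\w_1$ over this algebra will close up, even though $\cardinality{\wstar} = 2^\continuum$ forbids any enumeration of points. Put $N_\alpha := \intersect_{\beta \le \alpha} A_\beta^*$, a decreasing family of closed neighbourhoods of $x$ with $\intersect_\alpha N_\alpha = \singleton{x}$, and $G_\alpha := \wstar \setminus N_\alpha$, an increasing family of open sets with $\Union_\alpha G_\alpha = Y$.

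I would then recursively build increasing open sets $U_\alpha, V_\alpha$, each a union of clopen sets, with $U_\alpha \intersect V_\alpha = \emptyset$ and $U_\alpha \union V_\alpha = G_\alpha$. At each step I split the increment $G_{\alpha+1} \setminus G_\alpha = N_\alpha \setminus N_{\alpha+1}$ (clopen at successor stages) between the two sides, and I simultaneously reserve inside the current neighbourhood $N_\alpha$ one nonempty clopen piece for $U$ and one for $V$. Since $N_\alpha \subseteq A_\beta^*$ for all $\alpha \ge \beta$, carrying this out at every stage forces both $U := \Union_\alpha U_\alpha$ and $V := \Union_\alpha V_\alpha$ to meet every basic neighbourhood $A_\beta^*$ of $x$, so that $x \in \closure{U} \intersect \closure{V}$; and $\Union_\alpha G_\alpha = Y$ gives $U \union V = Y$. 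The function $\mathds{1}_U$ is then the desired example.

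The hard part will be the limit stages. The regions that remain uncovered just below a limit $\lambda$ — governed by $N_\lambda = \intersect_{\beta<\lambda} A_\beta^*$ — are in general only closed $G_\delta$, or cozero, sets rather than clopen ones, so the genuine work is to split them into pieces that keep $U$ and $V$ \emph{relatively clopen} in $Y$, i.e.\ to guarantee $\closure{U} \intersect \closure{V} = \singleton{x}$. This is exactly where the $F$-space property of $\wstar$ is indispensable: disjoint cozero subsets of $\wstar$ have disjoint closures, which confines every potential collision of $\closure{U}$ and $\closure{V}$ to the lone non-$G_\delta$ point $x$. Threading the splitting through all $\w_1$ limit stages while simultaneously (a) covering all of $Y$, (b) keeping the two sides disjoint and cozero, and (c) maintaining accumulation at $x$, is the main technical obstacle; CH is used both to bound the recursion length by $\w_1$ and to furnish the character-$\w_1$ neighbourhood base at $x$ that makes $\w_1$ steps suffice.
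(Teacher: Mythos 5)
Your plan is the paper's own argument: it is exactly Fine and Gillman's Butterfly Lemma (Lemma \ref{butterfly}) --- a transfinite recursion of length $\w_1$ along a clopen neighbourhood base at $x$ producing a partition of $\wstar \setminus \singleton{x}$ into two open sets $U$ and $V$ with $\closure{U} \intersect \closure{V} = \singleton{x}$, followed by the observation that $\mathds{1}_U$ has no continuous extension. The limit stages that you single out as the main obstacle are in fact the part your stated tools already handle: at any stage $\alpha < \w_1$ the two accumulated wings are disjoint unions of countably many $\wstar$-clopen sets, hence disjoint open $F_\sigma$-sets; the $F$-space property gives them disjoint closures, and compactness plus zero-dimensionality upgrades this to a clopen partition $C \sqcup D = \wstar$ with one wing inside each half. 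Taking the new increments to be $C \setminus N_\alpha$ and $D \setminus N_\alpha$ keeps every partial wing a countable union of $\wstar$-clopen sets, which is what lets the recursion close up.

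The genuine gap is the step you treat as routine: ``reserving'' a nonempty clopen piece for each wing inside every $N_\alpha$. This is not a formality --- if you omit it, one wing can come out compact (a clopen subset of $\wstar$ missing $x$), in which case $\mathds{1}_U$ \emph{does} extend continuously over $x$; forcing \emph{both} wings to accumulate at $x$ is the whole point. To carry it out you must know that $N_\alpha$ minus everything built so far --- a set containing $x$ that is an intersection of countably many clopen sets --- has nonempty interior, so that two disjoint nonempty clopen sets avoiding $x$ can be found inside it. That is the second Parovi\v{c}enko property of $\wstar$ (every nonempty $G_\delta$-set has nonempty interior), which your proposal never invokes and which does not follow from the $F$-space property. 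A related bookkeeping slip: you cannot simultaneously insist on $U_\alpha \union V_\alpha = G_\alpha$ and reserve pieces inside $N_\alpha = \wstar \setminus G_\alpha$; as in the paper, one should only require $U_\alpha \union V_\alpha \supset G_\alpha$ with both partial wings countable unions of clopen sets avoiding $x$.
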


\begin{mythm}[van Douwen, Kunen and van Mill, 1989 \cite{douwenkunenmill}]
\label{thm1.2}
It is consistent with the continuum being $\aleph_2$ that for every point $x \in \wstar$ all continuous real-valued functions on $\wstar \setminus \singleton{x}$ can be continuously extended to $\wstar$.
\end{mythm}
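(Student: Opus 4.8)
My plan is to prove the statement as a consistency result obtained by forcing, after first recasting the extension problem in purely topological terms. The starting observation is that, since $\wstar\setminus\singleton{x}$ is dense in the compact space $\wstar$, demanding that every bounded continuous real-valued function on $\wstar\setminus\singleton{x}$ extend over $\wstar$ is exactly the assertion that $\wstar\setminus\singleton{x}$ is $\Cstar$-embedded in $\wstar$; equivalently $\beta\p{\wstar\setminus\singleton{x}}=\wstar$, so that the Stone--\v{C}ech remainder of $\wstar\setminus\singleton{x}$ collapses to the single point $x$. Moreover, because no point of $\wstar$ is a $G_\delta$ (nonempty $G_\delta$ subsets of $\wstar$ have nonempty interior), the dense subspace $Y_x:=\wstar\setminus\singleton{x}$ is $G_\delta$-dense in $\wstar$ and hence pseudocompact; thus every continuous real-valued function on $Y_x$ is automatically bounded and $C$- and $\Cstar$-embedding coincide here, so the phrase ``all continuous real-valued functions'' is unambiguous. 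I would therefore aim to produce a model of $\cont=\aleph_2$ in which, for every free ultrafilter $x$ on $\w$, the space $Y_x$ is $\Cstar$-embedded in $\wstar$.

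The second step is to isolate, for a fixed $x$, the obstruction to $\Cstar$-embedding as a combinatorial object living on $\w$. A nonextendable bounded function on $Y_x$ is, after rescaling, a continuous $f\colon Y_x\to[0,1]$ whose cluster set at $x$ is a nondegenerate interval; its level sets $F_0=f^{-1}(0)$ and $F_1=f^{-1}(1)$ are then completely separated in $Y_x$ yet satisfy $x\in\closure{F_0}\intersect\closure{F_1}$, closures taken in $\wstar$. Conversely, any two completely separated sets both clustering at $x$ furnish a nonextendable $f$. The crucial feature, exploited by Fine and Gillman, is that such a pair can be manufactured by a recursion of length $\cont$: enumerating a base $\Set{C_\alpha \colon \alpha<\cont}$ for the neighbourhood filter of $x$ and, inside each copy $\closure{[C_\alpha]}\cong\wstar$, peeling off disjoint clopen pieces to feed alternately into $F_0$ and $F_1$ while keeping the two sides completely separated (which is possible since each $[C_\alpha]$ carries abundant disjoint clopen subsets). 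Under $\mathsf{CH}$ this recursion has length $\w_1$ and always succeeds, which is precisely Theorem~\ref{thm1.1}. My target model must block every such construction simultaneously. To make the obstruction small enough to attack, I would reduce a would-be pair $(F_0,F_1)$ to a localised gap-type object in $\powerSet{\w}/\mathrm{fin}$ relative to the filter $x$ --- a pair of $\subseteq^*$-monotone $\w_1$-sequences of subsets of $\w$ straddling $x$ --- so that the witness to non-$\Cstar$-embeddedness is coded by an object of size $\aleph_1$ rather than by an arbitrary function on the $2^\cont$-point space $Y_x$.

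The third step is the forcing itself. Beginning from a ground model of $\mathsf{GCH}$, I would run a countable-support iteration of proper forcings of length $\w_2$ with iterands of size $\aleph_1$; standard bookkeeping then yields $\cont=\aleph_2$, and the bound on the iterands guarantees the $\aleph_2$-chain condition, so that any configuration of size $\aleph_1$ in the final model already lives in some initial segment $V[G_\alpha]$ with $\alpha<\w_2$. At a typical stage I would generically destroy one localised pregap --- either by interpolating it, thereby forcing a continuous limit to exist, or by adjoining a subset of $\w$ that prevents the two sides from clustering at any common ultrafilter --- choosing the single-step poset to be proper. The $\aleph_2$-chain condition then supplies a reflection argument: any pair $(F_0,F_1)$ witnessing non-$\Cstar$-embeddedness in the final model appears, as a pregap, at some stage $\alpha<\w_2$ and is handled at a later stage. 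This is what makes an iteration of length $\w_2$ suffice to cover the $2^\cont$ many points $x$ at once. Carrying this out uniformly over all ultrafilters $x$, the final model satisfies $\cont=\aleph_2$ together with the statement that no point of $\wstar$ is straddled by a separated pregap; hence every $Y_x$ is $\Cstar$-embedded, which is the conclusion of the theorem.

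The main obstacle lies squarely at the interface between the topology and the iteration, and it has two faces. First, the reduction in the second step --- faithfully compressing an arbitrary nonextendable $f$ on the enormous space $Y_x$ down to a gap-type object coded on $\w$, in a way that survives passage to an ultrafilter and is genuinely destructible by a proper poset --- is delicate; extracting a clean combinatorial invariant whose destruction is equivalent to the existence of the continuous limit is the real crux. Second, one must show that the iteration neither resurrects a destroyed obstruction nor creates a new separated pregap at limit stages, which requires a preservation theorem for the relevant class of proper forcings, the usual danger being that a new function on some $Y_x$, or a new ultrafilter, appears late and straddles a pregap that was safe earlier. I expect the bulk of the work, and the place where the value $\aleph_2$ and the properness machinery are genuinely used, to lie in this preservation and reflection analysis rather than in the single-step destruction.
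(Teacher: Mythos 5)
First, a point of reference: the paper offers no proof of Theorem~\ref{thm1.2} at all --- it is quoted as a known theorem of van Douwen, Kunen and van Mill with a citation to \cite{douwenkunenmill} --- so there is no in-paper argument to compare yours against. Judged on its own, your text is a strategy outline rather than a proof. The preliminary topological recasting is sound ($G_\delta$-density of $\wstar\setminus\singleton{x}$ gives pseudocompactness, so $C$- and $\Cstar$-embedding coincide, and non-extendability is equivalent to a pair of completely separated sets both clustering at $x$), but everything after that is deferred: you never define the single-step poset, never prove it is proper, never prove the limit-stage preservation, and you explicitly flag the ``real crux'' (the combinatorial reduction) as unresolved. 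A plan that names its own missing lemmas is not a proof of a theorem of this depth.

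The most concrete gap is the reflection step. Your argument needs every witness to non-$\Cstar$-embeddedness in the final model to be coded by an object of size $\aleph_1$ appearing at some stage $\alpha<\w_2$. But in a model of $\cont=\aleph_2$ the open $F_\sigma$ subsets of $\wstar\setminus\singleton{x}$ are countable unions of \emph{clopen subsets of} $\wstar\setminus\singleton{x}$, and (as Sections~\ref{section3}--\ref{section4} of this paper make clear) such clopen sets are built from as many clopen pieces of $\wstar$ as the character of $x$, i.e.\ potentially $\aleph_2$ many; moreover the ultrafilter $x$ itself is an $\aleph_2$-sized object that does not belong to any intermediate model. So the claim that a witnessing pair $(F_0,F_1)$ ``appears as a pregap at some stage $\alpha<\w_2$'' does not follow from the $\aleph_2$-chain condition and is exactly where a substantive new idea is required. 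A second concrete worry is your single-step forcing: ``interpolating'' an $(\w_1,\w_1)$-pregap is in general impossible by a ccc or proper poset once the pregap is a genuine (e.g.\ Hausdorff) gap without collapsing $\w_1$, so the dichotomy you propose for the iterand needs justification, and the danger that limit stages create new separated pregaps (or new ultrafilters straddling old ones) is precisely the preservation theorem you acknowledge but do not supply. In short: plausible architecture, but the theorem's content lives in the parts you have left open.
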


Phrased in the language of compactifications, Theorem \ref{thm1.2} says that the Stone-\v{C}ech compactification of $\wstar \setminus \singleton{x}$ consistently coincides with $\wstar$. Theorem \ref{thm1.1}, however, guarantees that under the Continuum Hypothesis (CH), the Stone-\v{C}ech compactification certainly adds more than a single point to $\wstar \setminus \singleton{x}$. 

This last observation gives rise to a variety of new, interesting questions. What are the precise mechanisms that increase the size of the Stone-\v{C}ech compactification of $\wstar \setminus \singleton{x}$ under CH? Can one determine its size or is this independent of ZFC+CH? Does the Stone-\v{C}ech remainder of $\wstar \setminus \singleton{x}$ reflect structural properties of $\wstar$ or $\wstar \setminus \singleton{x}$ and in particular, does this depend on which point $x$ we remove?

What makes these problems particularly interesting is a surprising link between compactifications of $\wstar \setminus \singleton{x}$ and another research area in the vicinity of $\wstar$: the theory of $\kappa$-Parovi\v{c}enko spaces, developed by Negrepontis  \cite{negrepontis}, Comfort and Negrepontis \cite{Ultrafilters} and Dow \cite{Dow}.

A rigorous introduction to $\kappa$-Parovi\v{c}enko spaces will follow in the next section. Intuitively, however, $\kappa$-Parovi\v{c}enko spaces generalise crucial characteristics of $\wstar$ to spaces of (potentially larger) weight $\kappa$. Recall that under CH, or equivalently under $\omega_1 = \w_1^{<\w_1}$, the space $\wstar$ is topologically characterised as the unique Parovi\v{c}enko space of weight $\continuum$.  Here, the Parovi\v{c}enko properties entail a precise description of the behaviour of countable unions and intersections of clopen sets in $\wstar$: disjoint pairs of such countable unions have disjoint closures, and every such non-empty  intersection has non-empty interior. The $\kappa$-Parovi\v{c}enko properties essentially consist of corresponding requirements for all $\lambda$-unions and $\lambda$-intersections of clopen sets for all $\lambda < \kappa$. And as for $\wstar$ under CH, for every cardinal $\kappa$ with the property $\kappa = \kappa^{<\kappa}$ there is a unique $\kappa$-Parovi\v{c}enko space of weight $\kappa$, denoted by $S_\kappa$. In particular, $S_{\w_1}$ coincides with $\wstar$ under CH.

The main result of this paper is that under CH, the Stone-\v{C}ech remainder of $\wstar \setminus \singleton{x}$ is an $\w_2$-Parovi\v{c}enko space of cardinality $2^{2^{\w_1}}$ and contains a family of $2^{\w_1}$ disjoint open sets, regardless of the choice of $x$. In general, under $\kappa=\kappa^{<\kappa}$, the Stone-\v{C}ech remainder of $S_\kappa \setminus \singleton{x}$ is a $\kappa^+$-Parovi\v{c}enko space of cardinality $2^{2^\kappa}$ and contains a family of $2^\kappa$ disjoint clopen sets. Again, this holds for every point $x$ in $S_\kappa$. 					

As a consequence, assuming $2^\cont=\w_2$, the Stone-\v{C}ech remainder of any $\wstar \setminus \singleton{x}$ is homeomorphic to the unique $\w_2$-Parovi\v{c}enko space of weight $\w_2$, independently of which point $x$ gets removed. This is surprising, considering that subspaces $\wstar \setminus \singleton{x}$ and $\wstar \setminus \singleton{y}$ are typically very different.

Our main result also improves a theorem by A.\,Dow, who showed in 1985 that the remainder of $S_\kappa \setminus \singleton{p}$ is a $\kappa^+$-Parovi\v{c}enko space provided that $p$ has a nested neighbourhood base in $S_\kappa$, \cite{Dow}. Consequently, we now have a more comprehensive answer to a question by S.\,Negrepontis \cite[p.\,522]{negrepontis}: under $\kappa^+=2^\kappa$, the space $S_{\kappa^+}$ can be represented as the Stone-\v{C}ech remainder of any $S_\kappa \setminus \singleton{x}$, regardless of whether $x$ has a nested neighbourhood base or not. 

This paper is organised as follows. In Section \ref{section2} we recall the relevant background for the investigation of $\wstar$ and $S_\kappa$. Section \ref{section3} provides a short account of Fine and Gillman's result that under CH, no $\wstar \setminus \singleton{x}$ is $\Cstar$-embedded in $\wstar$, and concludes with a characterisation of the clopen subsets of $\wstar \setminus \singleton{x}$. Section \ref{section4} generalises these results to spaces $S_\kappa$ for any $\kappa$ with $\kappa=\kappa^{<\kappa}$. In Section \ref{section6} we show that the Stone-\v{C}ech compactification of $S_\kappa \setminus \singleton{x}$ is a $\kappa$-Parovi\v{c}enko space of weight $2^\kappa$. 

The next two sections are concerned with the remainder of $S_\kappa \setminus \singleton{x}$. First we show in Section \ref{section7} that  all remainders are $\kappa^+$-Parovi\v{c}enko spaces and conclude that under additional set theoretic assumptions they are all homeomorphic. In Section \ref{section8} we answer some questions about cardinal invariants of these spaces and prove that the Stone-\v{C}ech remainder of $S_\kappa \setminus \singleton{x}$ has cardinality $2^{2^\kappa}$. Our proof builds on the observation that under CH, the space $\wstar$ is a \emph{monotone F-space}, i.e.\ that it is possible to assign a separating clopen partition to pairs of disjoint open $F_\sigma$-sets, in the spirit of a monotone normality operator. We also give an explicit topological construction of a family of $2^\kappa$ disjoint open sets in the Stone-\v{C}ech remainder of $S_\kappa \setminus \singleton{x}$.

Each of Section \ref{section6} to \ref{section8} builds only on Section \ref{section4} and can be read independently. Section \ref{section9} concludes the paper with some open questions. 

The authors would like to thank Alan Dow for his help with the proof of Theorem \ref{hui} at the Spring Topology and Dynamics Conference 2014 in Richmond, Virginia.

\section{\bf Background}
\label{section2}

We recall properties of $\wstar$ and $S_\kappa$ which we use freely in later sections. All this and more can be found in \cite{Ultrafilters,Rings, Intro}. The reader should note that our notion of a ($\kappa$-)Parovi\v{c}enko space differs from the commonly adopted definition in the sense that it does not a priori include any assumptions regarding weight.

A subset $Y$ of a space $X$ is $\Cstar$-\emph{embedded} if every bounded real-valued continuous function on $Y$ can be continuously extended to all of $X$. For a Tychonoff space $X$, we write $\beta X$ for its \emph{Stone-\v{C}ech compactification}, the unique compact Hausdorff space in which $X$ is densely $\Cstar$-embedded, and we write $X^*=\beta X \setminus X$ for the \emph{remainder} of $X$. A space is \emph{zero-dimensional} if it has a base of clopen (closed-and-open) sets. A space $X$ is \emph{strongly zero-dimensional} if its Stone-\v{C}ech compactification $\beta X$ is zero-dimensional. A Tychonoff space $X$ is called $F$-\emph{space} if each cozero-set is $\Cstar$-embedded in $X$. We list some facts about $F$-spaces \cite[Ch.\,14]{Rings}.

\begin{enumerate}
\item $X$ is an $F$-space if and only if $\beta X$ is an $F$-space.
\item A normal space is an $F$-space if and only if disjoint open $F_\sigma$-sets have disjoint closures.
\item Closed subspaces of normal $F$-spaces are again $F$-spaces.
\item Infinite closed subspaces of compact $F$-spaces contain a copy of $\beta \w$. Therefore, compact $F$-spaces do not contain convergent sequences.
\end{enumerate}

In compact zero-dimensional spaces, cozero-sets are countable unions of clopen sets. This motivates the following definition from \cite[Ch.\,14]{Ultrafilters}. In a zero-dimensional space $X$, the ($X$-)\emph{type} of an open subset $U$ of $X$ is the least cardinal number $\tau=\tau(U)$ such that $U$ can be written as a union of $\tau$-many clopen subsets of $X$. A zero-dimensional space where open subsets of type less than $\kappa$ are $\Cstar$-embedded is called $F_\kappa$-\emph{space}. In zero-dimensional compact spaces the notions of $F$- and $F_{\w_1}$-space coincide.

\begin{enumerate}
\item[($1'$)] $X$ is a strongly zero-dimensional $F_\kappa$-space if and only if $\beta X$ is a strongly zero-dimensional $F_\kappa$-space. See Theorem \ref{FKappaSpaces}.
\item[($2'$)] In $F_\kappa$-spaces, disjoint open sets of types less than $\kappa$ have disjoint closures, and in normal spaces both conditions are equivalent, \cite[6.5]{Ultrafilters}.
\end{enumerate}

The space $\wstar$ is a compact zero-dimensional Hausdorff space without isolated points with the following two extra properties: it is an $F$-space in which each non-empty $G_\delta$-set has non-empty interior. Such a space is also called \emph{Parovi\v{c}enko space}. 

We call a space a $G_\kappa$\emph{-space} if every non-empty intersection of less than $\kappa$-many open sets has non-empty interior. Then $\wstar$ is a $G_{\w_1}$-space.

\begin{enumerate}
\setcounter{enumi}{4}
\item The following are equivalent for a zero-dimensional space $X$:
\begin{enumerate}
\renewcommand{\labelenumi}{(\alph{enumi})}
\item $X$ is a $G_\kappa$-space,
\item For every open subset $U$ of $X$ of type less than $\kappa$, no set $H$ with $U \subsetneq H \subset \closure{U}$ is open,
\item For every open subset $U$ of $X$ such that $1 < \tau(U) < \kappa$, its closure $\closure{U}$ is not open,
\item No open subset $U$ of $X$ with $1 < \tau(U) < \kappa$ is dense in $X$.
\end{enumerate}
\end{enumerate}

For proofs of the non-trivial implications $(a) \Rightarrow (b)$ and $(d) \Rightarrow (a)$ see \cite[14.5]{Ultrafilters} and \cite[6.6]{Ultrafilters} respectively.

We call a space a $\kappa$-\emph{Parovi\v{c}enko space} if it is a compact zero-dimensional $F_\kappa$- and $G_\kappa$-space without isolated points. This modifies the corresponding definitions of \cite{douwenmill} and \cite[1.2]{Dow}, freeing $\kappa$-Parovi\v{c}enko spaces from additional weight restrictions. Under our definition, the space $\wstar$ is an ($\w_1$-)Parovi\v{c}enko space of weight $\cont$.

It is not hard to prove that any $\kappa$-Parovi\v{c}enko space has weight at least $\kappa^{<\kappa}$. Thus, the following two  characterisation theorems say, loosely speaking, that the smallest possible ($\kappa$-)Parovi\v{c}enko spaces, namely the ones of weight $\kappa$, are topologically unique, and larger ones are not. 

\begin{mythm}[Parovi\v{c}enko {\cite{parov}}, van Douwen \& van Mill {\cite{douwenmill}}]
CH is equivalent to the assertion that every ($\w_1$-)Parovi\v{c}enko space of weight $\cont$ is homeomorphic to $\wstar$.
\end{mythm}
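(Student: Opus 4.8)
Since the statement is an equivalence, I would prove the two implications separately: the forward direction (CH yields uniqueness) is Parovi\v{c}enko's theorem, while the converse (uniqueness yields CH) is the result of van Douwen and van Mill. For the forward direction the plan is to pass to Boolean algebras via Stone duality. As $X$ and $Y$ are compact zero-dimensional Hausdorff spaces, each is determined up to homeomorphism by its clopen algebra $\clopens{X}$, and homeomorphisms correspond exactly to Boolean isomorphisms $\clopens{X} \to \clopens{Y}$; moreover the weight of such a space equals the cardinality of its clopen algebra, so under CH both algebras have cardinality $\cont = \w_1$. I would translate the four Parovi\v{c}enko conditions into algebraic terms: no isolated points becomes atomlessness; the $F$-space property, together with compactness and zero-dimensionality, becomes the countable separation property, namely that whenever $\Set{a_n : n \in \w}$ and $\Set{b_m : m \in \w}$ satisfy $a_n \wedge b_m = 0$ for all $n,m$ there is a clopen $c$ with $a_n \le c$ and $c \wedge b_m = 0$ for all $n,m$ (disjoint cozero-sets have disjoint closures, and disjoint closed sets in a compact zero-dimensional space are separated by a clopen set); and the $G_{\w_1}$-property becomes the statement that every strictly decreasing $\w$-chain of clopen sets with non-empty intersection has a non-zero clopen lower bound.

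The core is then a back-and-forth construction of length $\w_1$. I would enumerate $\clopens{X} = \Set{u_\alpha : \alpha < \w_1}$ and $\clopens{Y} = \Set{v_\alpha : \alpha < \w_1}$ and build a continuous increasing chain of isomorphisms $\phi_\alpha \colon A_\alpha \to B_\alpha$ between countable subalgebras, at successor steps alternately forcing $u_\alpha$ into the domain and $v_\alpha$ into the range, so that $\bigcup_\alpha \phi_\alpha$ is an isomorphism of the full algebras. Everything rests on the one-element extension lemma: given an isomorphism $\phi \colon A \to B$ of countable subalgebras and $u \in \clopens{X} \setminus A$, produce $v \in \clopens{Y}$ such that $u \mapsto v$ extends $\phi$ to an isomorphism of the subalgebra generated by $A \union \singleton{u}$. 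Writing $L = \Set{a \in A : a \le u}$ and $U = \Set{a \in A : u \le a}$, the order constraints ``$\phi(a) \le v$ for $a \in L$ and $v \le \phi(a)$ for $a \in U$'' are met by applying the countable separation property in $Y$ to the two disjoint countable families $\phi[L]$ and $\Set{\sim \phi(a) : a \in U}$, since $a \le u \le a'$ forces $\phi(a) \le \phi(a')$.

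I expect the main obstacle to lie not in this existence but in \emph{faithfulness}: for the assignment to generate an isomorphism rather than collapse the algebra, one additionally needs every $a \in A$ that crosses $u$ (that is, $a \wedge u \ne 0 \ne a \wedge \sim u$) to be sent to an element crossing $v$, i.e.\ $\phi(a) \wedge v \ne 0 \ne \phi(a) \wedge \sim v$. Meeting these countably many strictness conditions simultaneously with the order constraints is the delicate point, and it is exactly here that atomlessness and the $G_{\w_1}$-property enter, through a recursive refinement of the separating clopen set that carves out a non-zero piece of each crossing $\phi(a)$ on either side of $v$. Once the lemma is secured the recursion goes through and $\bigcup_\alpha \phi_\alpha$ witnesses $X \cong Y \cong \wstar$.

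For the converse I would argue contrapositively: assuming $\w_1 < \cont$, exhibit two Parovi\v{c}enko spaces of weight $\cont$ that are not homeomorphic, one of them being $\wstar$ itself. The second would be the Stone space of a Boolean algebra of cardinality $\cont$ built by transfinite recursion to satisfy all of the algebraic Parovi\v{c}enko conditions above, while planting a topological feature --- such as a point of prescribed character or a nested local base --- that forces non-homeomorphism with $\wstar$. That such distinguishing examples exist precisely when CH fails is the content of the argument of van Douwen and van Mill \cite{douwenmill}; the obstacle there is the explicit construction together with the ZFC verification that the chosen invariant genuinely separates the two spaces. Combining the two implications yields the stated equivalence.
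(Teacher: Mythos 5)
The paper does not prove this statement: it is quoted in Section~2 as a known characterisation theorem, with the two directions attributed to Parovi\v{c}enko and to van Douwen--van Mill, so there is no in-paper argument to compare yours against. Judged on its own merits, your plan for the forward direction is the standard one and is essentially sound: Stone duality reduces the problem to showing that under CH any two atomless Boolean algebras of cardinality $\w_1$ with the countable separation property and the lower-bound property for decreasing $\w$-chains of non-zero elements are isomorphic, and your translations of the Parovi\v{c}enko conditions are correct (for the $G_{\w_1}$ condition one should add the remark that in a compact zero-dimensional space every non-empty $G_\delta$ contains a non-empty countable decreasing intersection of clopen sets, so nothing is lost by passing to chains of clopens). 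You also correctly isolate the only delicate point of the back-and-forth, namely forcing every $a \in A$ that crosses $u$ to be sent to an element crossing $v$. But you do not carry this step out, and it is not a one-liner: for each crossing $a$ one must carve out of $\phi(a)$ a non-zero clopen piece lying below every member of $\phi[U]$ (to adjoin to $v$) and another non-zero piece disjoint from every member of $\phi[L]$ (to exclude from $v$); each such piece lives inside a countable intersection of clopen sets whose non-emptiness must be transported through $\phi$ via the finite intersection property and compactness before the $G_{\w_1}$ property can supply a non-zero clopen subset, and the countably many pieces must in addition be chosen pairwise disjoint by a further recursion. As written this is a genuine, though fillable, gap.

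The converse is where the proposal stops being a proof. You reduce it to ``construct a second Parovi\v{c}enko space of weight $\cont$ with a planted feature forcing non-homeomorphism with $\wstar$'' and then defer the existence of such a feature to \cite{douwenmill}. That deferral is the entire content of the direction: the whole difficulty is to exhibit an invariant that ZFC proves one Parovi\v{c}enko space of weight $\cont$ possesses and another lacks whenever CH fails, and the candidates you float (a point of prescribed character, a nested local base) are themselves properties whose presence in $\wstar$ is independence-prone, so it is far from clear they can do the job. If you intend the theorem to be used as a citation, as this paper does, your write-up is adequate; if you intend it as a proof of the stated equivalence, the second half is missing.
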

\begin{mythm}[Negrepontis {\cite{negrepontis}}, Dow {\cite{Dow}}]
The assumption $\kappa = \kappa^{<\kappa}$ is equivalent to the assertion that all $\kappa$-Parovi\v{c}enko spaces of weight $\kappa^{<\kappa}$ are homeomorphic.
\end{mythm}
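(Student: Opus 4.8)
The plan is to pass through Stone duality and reduce the topological statement to a uniqueness statement about Boolean algebras. Throughout, a compact zero-dimensional Hausdorff space $X$ corresponds to its algebra of clopen sets $\clopens{X}$, homeomorphisms correspond to Boolean isomorphisms, and the weight of $X$ equals $\cardinality{\clopens{X}}$ for infinite algebras. The first step is a \emph{translation lemma}: $X$ is a $\kappa$-Parovi\v{c}enko space if and only if $B = \clopens{X}$ is atomless and $\kappa$-saturated as a model of the theory of Boolean algebras, equivalently $B$ has the interpolation property that for all $\lambda,\mu<\kappa$ and all families $\{a_i : i<\lambda\}$, $\{b_j : j<\mu\}$ in $B$ with $a_i \le b_j$ there is a $c \in B$ with $a_i \le c \le b_j$ for all $i,j$ (together with the strict-inequality clauses encoding atomlessness). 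Having no isolated points is exactly atomlessness; the $F_\kappa$-property unwinds, via fact $(2')$, to the existence of a clopen $c$ separating two disjoint open sets of type $<\kappa$, that is, to gap-filling from below and above; and the $G_\kappa$-property supplies, via the characterisation (5), the realisation of the remaining descending and ``$\ne$'' constraints of one-types. Granting the translation, the theorem becomes: $\kappa = \kappa^{<\kappa}$ holds if and only if any two atomless $\kappa$-saturated Boolean algebras of cardinality $\kappa^{<\kappa}$ are isomorphic.

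For the sufficiency direction I would assume $\kappa=\kappa^{<\kappa}$; then $\kappa$ is regular (else $\kappa^{<\kappa}\ge\kappa^{\mathrm{cf}(\kappa)}>\kappa$ by K\"onig), the minimal weight $\kappa^{<\kappa}$ equals $\kappa$, and every subalgebra generated by $<\kappa$ elements again has size $<\kappa$. Given atomless $\kappa$-saturated algebras $B,B'$ of cardinality $\kappa$, I would run the standard back-and-forth, building an increasing chain $\pair{f_\alpha : \alpha<\kappa}$ of isomorphisms $f_\alpha \colon A_\alpha \to A'_\alpha$ between subalgebras of size $<\kappa$, enumerating $B$ and $B'$ in type $\kappa$ and alternately forcing the next element into the domain and the range. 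The one non-trivial step is the extension: to enlarge $f \colon A \to A'$ so that a prescribed $b \in B$ enters the domain, transport the quantifier-free type of $b$ over $A$ through $f$ to a type over $A'$ of size $<\kappa$ (here the theory of atomless Boolean algebras admits quantifier elimination, so complete types are quantifier-free and the transported type is finitely satisfiable because $f$ is an isomorphism on finite subalgebras), and realise it by the $\kappa$-saturation of $B'$; the realising element is $f(b)$. Regularity of $\kappa$ keeps every $A_\alpha$ of size $<\kappa$, so saturation stays applicable at all successor and limit stages, and $f=\Union_\alpha f_\alpha$ is the desired isomorphism.

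The hard part will be the converse (necessity), and I regard it as the main obstacle. Arguing contrapositively, if $\kappa<\kappa^{<\kappa}$ I would exhibit two atomless $\kappa$-saturated Boolean algebras of cardinality $\kappa^{<\kappa}$ that fail to be isomorphic, that is, two $\kappa$-Parovi\v{c}enko spaces of the minimal weight $\kappa^{<\kappa}$ that are not homeomorphic. The reason a single back-and-forth can no longer be run is precisely that $\kappa$-saturation ceases to determine the algebra at the now strictly larger cardinality $\kappa^{<\kappa}$, leaving room to make the two algebras differ in a genuine topological invariant. The cleanest candidate is the character spectrum: in any $\kappa$-Parovi\v{c}enko space the $G_\kappa$-property forces every point to have character $\ge\kappa$, but it does not collapse the gap between $\kappa$ and $\kappa^{<\kappa}$. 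Thus I would, by a chain construction, build one such algebra whose Stone space has a point with a nested neighbourhood base of length $\kappa$ (hence character exactly $\kappa$, in the spirit of Dow's nested-base hypothesis), and a companion algebra, still atomless and $\kappa$-saturated of the same cardinality, in which every point has character $\kappa^{<\kappa}$. Their character spectra then differ, so they cannot be homeomorphic. Verifying that these constructions preserve atomlessness and $\kappa$-saturation while controlling the character of every point is the delicate step, and it is here that the hypothesis $\kappa=\kappa^{<\kappa}$ is seen to be necessary rather than merely convenient.
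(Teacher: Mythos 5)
The paper does not prove this statement at all --- it is quoted as a known result with citations to Negrepontis and Dow --- so there is no internal proof to compare against; I can only assess your proposal on its own terms. Your forward direction (sufficiency of $\kappa=\kappa^{<\kappa}$) is essentially the classical Negrepontis argument and is sound in outline: Stone duality, the translation of the $F_\kappa$-, $G_\kappa$- and no-isolated-points properties into model-theoretic $\kappa$-saturation of the atomless clopen algebra (this translation lemma is nontrivial but is exactly the content of Comfort--Negrepontis, Ch.~6, and it does rest on quantifier elimination for atomless Boolean algebras as you say), followed by the standard back-and-forth proving uniqueness of saturated models of cardinality $\kappa$.

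The genuine gap is the converse, which you correctly identify as the hard half but then do not prove; this is precisely the content of van Douwen--van Mill (for $\kappa=\w_1$) and Dow (in general), and a sketch that ends with ``verifying \dots is the delicate step'' has not discharged it. Worse, the specific strategy you propose is flawed. If $\kappa$ is singular then automatically $\kappa<\kappa^{<\kappa}$, yet no point of a $\kappa$-Parovi\v{c}enko space can have a nested neighbourhood base of length $\kappa$: refining along a cofinal set of order type $\mathrm{cf}(\kappa)<\kappa$ would exhibit the point as a non-empty intersection of fewer than $\kappa$ open sets with empty interior, contradicting the $G_\kappa$-property. So your first witness space simply does not exist in the singular case. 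Even for regular $\kappa$ with $2^{<\kappa}>\kappa$, you give no construction of either algebra, and no argument that a $\kappa$-Parovi\v{c}enko space of weight $\kappa^{<\kappa}$ with \emph{every} point of character $\kappa^{<\kappa}$ can be produced; the character spectrum of such spaces is not obviously controllable (indeed the known proofs of necessity do not distinguish the two spaces by character). As it stands, one implication of the equivalence is unproved.
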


If the condition $\kappa=\kappa^{<\kappa}$ is satisfied then the unique $\kappa$-Parovi\v{c}enko space of weight $\kappa$ exists and is denoted by $S_\kappa$ \cite[6.12]{Ultrafilters}. Whenever the space $S_{\w_1}$ exists, it is homeomorphic to $\wstar$. The existence of uncountable cardinals satisfying the equality $\kappa=\kappa^{<\kappa}$ is independent of ZFC but an assumption like $\kappa^+=2^\kappa$ implies the equality for $\kappa^+$. Also note that $\kappa=\kappa^{<\kappa}$ implies that $\kappa$ is regular.

A $P_\kappa$\emph{-point} $p$ is a point such that the intersection of less than $\kappa$-many neighbourhoods of $p$ contains again an open neighbourhood of $p$. A $P_{\w_1}$-point is simply called $P$-point. That $P_\kappa$-points exist in $S_\kappa$ is well-known \cite[6.17]{Ultrafilters}. A new proof of this fact is contained in Corollary \ref{mycor12}. We list some facts about $P_\kappa$-points.
\begin{enumerate}
\setcounter{enumi}{5}
\item In $S_\kappa$, $p$ is a $P_\kappa$-point if and only if $p$ has a nested neighbourhood base if and only if $p$ is not contained in the boundary of any open set of type less than $\kappa$.  
\item For every pair of $P_\kappa$-points in $S_\kappa$ there exists an autohomeomorphism of $S_\kappa$ mapping one $P_\kappa$-point to the other \cite[6.21]{Ultrafilters}.
\end{enumerate}
In particular, the subspaces $S_\kappa \setminus \singleton{p}$ are homeomorphic for all $P_\kappa$-points $p$. Corresponding results hold for $P$-points in $\wstar$ under CH.

\section{\bf The Butterfly Lemma}
\label{section3}

We begin with the classic result that under CH, $\wstar$ does not occur as the Stone-\v{C}ech compactification of any of its dense subspaces.

A point $x$ of a Hausdorff space $X$ is called \emph{strong butterfly point} if its complement $X \setminus \singleton{x}$ can be partitioned into open sets $A$ and $B$ such that $\closure{A} \cap \closure{B} = \singleton{x}$. The sets $A$ and $B$ are called \emph{wings} of the butterfly point $x$. Note that in $X\setminus \singleton{x}$, the wings $A$ and $B$ are clopen and non-compact.

The following lemma by Fine and Gillman \cite{fine,Gillmann} states that under CH, every point in $\wstar$ is a strong butterfly point. We outline the proof, as later on we will use variations of this approach in Theorem \ref{hui} and Lemmas \ref{moretechnicalbits} and \ref{cell}.

\begin{mylem}[Butterfly Lemma, Fine and Gillman]
\label{butterfly}
\textnormal{[CH].} Every point in $\wstar$ is a strong butterfly point.
\end{mylem}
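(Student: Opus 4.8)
The plan is to reduce the statement to a purely positive construction and then carry it out by a transfinite recursion of length $\w_1$, using CH in the form $\mathrm{weight}(\wstar)=\cont=\w_1$. The first thing I would record is the elementary observation that \emph{any} partition of $\wstar\setminus\singleton{x}$ into two disjoint open sets $A$ and $B$ already satisfies $\closure{A}\intersect\closure{B}\subseteq\singleton{x}$: a point $y\neq x$ lies in $A$ or in $B$, say in $A$, and then $A$ is an open neighbourhood of $y$ missing $B$, so $y\notin\closure{B}$. Hence it suffices to produce disjoint open sets $A,B$ with $A\union B=\wstar\setminus\singleton{x}$ and $x\in\closure{A}\intersect\closure{B}$; the wings are then automatically clopen and non-compact in the punctured space, and $\closure{A}\intersect\closure{B}=\singleton{x}$. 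Under CH there are only $\cont=\w_1$ clopen subsets of $\wstar$, so I fix an enumeration $\Set{C_\xi:\xi<\w_1}$ of all of them and build increasing sequences of disjoint open $F_\sigma$-sets $A_\xi,B_\xi$ ($\xi<\w_1$) avoiding $x$, taking unions at limit stages.

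At stage $\xi$ I process $C_\xi$. If $x\notin C_\xi$, I want to absorb $C_\xi$ into the wings. Since $A_\xi,B_\xi$ are disjoint open $F_\sigma$-sets in the compact normal $F$-space $\wstar$, the $F$-space property gives $\closure{A_\xi}\intersect\closure{B_\xi}=\emptyset$; separating these disjoint compact sets by a clopen $U$ with $\closure{A_\xi}\subseteq U$ and $\closure{B_\xi}\intersect U=\emptyset$, I put $C_\xi\intersect U$ into $A$ and $C_\xi\setminus U$ into $B$, which keeps the wings disjoint, avoids $x$, and forces $C_\xi\subseteq A_{\xi+1}\union B_{\xi+1}$. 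If instead $x\in C_\xi$, I must feed both wings inside the neighbourhood $C_\xi$ of $x$: here I use that $V_\xi:=A_\xi\union B_\xi$ has type at most $\w<\w_1$, so the characterisation of $G_{\w_1}$-spaces yields $\interior{\closure{V_\xi}}=V_\xi$. As $x\in C_\xi$ but $x\notin V_\xi$, the set $C_\xi$ cannot lie inside $\closure{V_\xi}$, so the open set $C_\xi\setminus\closure{V_\xi}$ is non-empty; since $\wstar$ has no isolated points I choose inside it two disjoint non-empty clopen sets $D_A,D_B$ avoiding $x$ and adjoin them to $A$ and $B$, thereby guaranteeing $C_\xi\intersect A\neq\emptyset$ and $C_\xi\intersect B\neq\emptyset$.

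The step I expect to be the crux is the passage through limit stages, which is exactly where $x$ may fail to be a $P$-point: one cannot hope to keep $x$ out of $\closure{A_\xi}$ and $\closure{B_\xi}$, since a countable increasing union of clopen neighbourhood-complements of $x$ need not contain a clopen neighbourhood of $x$. The whole point of the reduction above is that this does not matter — I never separate the two closures by hand, and the \emph{only} thing I ever need near $x$ is room, which the identity $\interior{\closure{V_\xi}}=V_\xi$ supplies at every stage $\xi<\w_1$ (where $V_\xi$ still has countable type). Finally I would put $A=\Union_{\xi<\w_1}A_\xi$ and $B=\Union_{\xi<\w_1}B_\xi$ and verify the requirements: disjointness and $x\notin A\union B$ are immediate; every $y\neq x$ lies in some clopen $C_\xi$ missing $x$ and was absorbed at stage $\xi$, so $A\union B=\wstar\setminus\singleton{x}$; and every clopen neighbourhood $C_\xi$ of $x$ meets both $A$ and $B$ by the feeding step, so $x\in\closure{A}\intersect\closure{B}$. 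By the opening observation $\closure{A}\intersect\closure{B}=\singleton{x}$, and $A,B$ are the desired wings.
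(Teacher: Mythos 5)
Your proof is correct and follows essentially the same strategy as the paper's: a transfinite recursion of length $\w_1$ (using CH to bound the number of clopen sets to enumerate) that keeps the two wings apart via the $F$-space property and keeps both wings limiting onto $x$ via the non-empty-interior ($G_{\w_1}$) property. The only difference is organisational --- you enumerate all clopen sets and absorb or feed case by case, whereas the paper enumerates a clopen neighbourhood base $\Set{U_\alpha}$ of $x$ and at stage $\alpha$ simultaneously covers $\wstar \setminus U_\alpha$ and plants a piece of each wing inside $U_\alpha$ --- but the two key ingredients and the roles they play are identical.
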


\begin{proof}
Let $x \in \wstar$ and fix a neighbourhood base $\set{U_\alpha}:{\alpha < \omega_1}$ of $x$ consisting of clopen sets. By transfinite recursion we define families of clopen sets $\set{A_\alpha}:{\alpha < \omega_1}$ and $\set{B_\alpha}:{\alpha < \omega_1}$ not containing $x$ such that all ($A_\alpha$,$B_\beta$)-pairs are disjoint and 
$$ X \setminus U_\alpha \subset A_\alpha \cup B_\alpha \; \text{ and } \; A_\alpha \cap U_\alpha \neq \emptyset \neq B_\alpha \cap U_\alpha \;  \text{ for all} \; \alpha < \w_1.$$
Once the construction is completed, we define disjoint open sets
$$A = \Union_{\alpha < \omega_1} A_\alpha \quad \text{and} \quad B = \Union_{\alpha < \omega_1} B_\alpha.$$
Their union covers of all of $\wstar \setminus \singleton{x}$ and both $A$ and $B$ limit onto $x$. 

It remains to describe the recursive construction. Let $\alpha < \w_1$ and assume that $A_\beta$ and $B_\beta$ have been defined for all  ordinals $\beta < \alpha$. Since countable unions of clopen sets are open $F_\sigma$-sets, by the $F$-space property there exist  clopen sets $C$ and $D$ partitioning $\wstar$ and containing the disjoint sets $\Union_{\beta < \alpha} A_\beta$ and $\Union_{\beta < \alpha} B_\beta$ respectively. 

The set $U_\alpha \setminus \Union_{\beta < \alpha} (A_\beta \cup B_\beta)$ is a non-empty $G_\delta$-set of the Parovi\v{c}enko space $\wstar$ as it contains $x$ and thus, it has non-empty interior. Hence, inside this set we may find disjoint non-empty clopen sets $C'$ and $D'$ not containing $x$. By defining $A_\alpha = (C \setminus U_\alpha) \cup C'$ and $B_\alpha = (D \setminus U_\alpha) \cup D'$ we see that $A_\alpha$ and $B_\alpha$ are as required. 
\end{proof}

We list some consequences of the Butterfly Lemma regarding $\wstar$ under CH. First must come the result for which the Butterfly Lemma was originally invented. 

\begin{mycor}[Fine and Gillman]
\label{nonStone}
\textnormal{[CH].} For every point $x$ of $\wstar$ the subspace $\wstar \setminus \singleton{x}$ is not $\Cstar$-embedded in $\wstar$. \qed
\end{mycor}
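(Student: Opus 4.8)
The plan is to convert the butterfly structure at $x$ directly into a bounded continuous function on $\wstar \setminus \singleton{x}$ whose only conceivable obstruction to extension sits at the removed point. Since the Butterfly Lemma already does the hard set-theoretic work, the corollary should follow from a short limit argument exploiting the two wings.

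Concretely, I would first invoke Lemma \ref{butterfly} to obtain a partition of $\wstar \setminus \singleton{x}$ into wings $A$ and $B$, which are clopen in the subspace and satisfy $\closure{A} \cap \closure{B} = \singleton{x}$, where the closures are taken in $\wstar$. I then define $f \colon \wstar \setminus \singleton{x} \to \R$ by setting $f \equiv 0$ on $A$ and $f \equiv 1$ on $B$. Because $A$ and $B$ are disjoint clopen sets whose union is the whole subspace, $f$ is continuous, and it is plainly bounded, taking values in $\Set{0,1}$.

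Next, I would argue by contradiction. Suppose $\wstar \setminus \singleton{x}$ were $\Cstar$-embedded in $\wstar$. Then $f$ would admit a continuous extension $\tilde f \colon \wstar \to \R$. Since $x \in \closure{A}$, continuity of $\tilde f$ forces $\tilde f(x)$ to be a limit of values of $f$ taken on $A$, whence $\tilde f(x) = 0$; symmetrically, $x \in \closure{B}$ forces $\tilde f(x) = 1$. This contradiction shows that no continuous extension of $f$ to $\wstar$ exists, so $f$ witnesses the failure of $\Cstar$-embedding.

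I do not expect any genuine obstacle here: the entire difficulty has been front-loaded into the existence of the butterfly decomposition, which is precisely where CH enters through the Parovi\v{c}enko $G_\delta$- and $F$-space properties used in Lemma \ref{butterfly}. Granting that lemma, the corollary is essentially immediate, since the two-valued wing function cannot reconcile the conflicting boundary values $0$ and $1$ that continuity would force simultaneously at the single point $x$.
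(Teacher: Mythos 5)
Your proof is correct and is exactly the argument the paper intends: the corollary is stated with an immediate \qed precisely because the two-valued function on the wings of the butterfly at $x$ cannot be continuously extended, since $x$ lies in the closure of both wings. No gaps.
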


The construction of the Butterfly Lemma can be used to build $2^{\w_1}$ many distinct clopen subsets of $\wstar \setminus \singleton{x}$. Indeed, in the proof of Lemma \ref{butterfly}, for every $\alpha < \w_1$ we have the choice of adding $C'$ or $D'$ to $A_\alpha$. The collection of all clopen subsets of a space $X$ is denoted by $\clopens{X}$. 

\begin{mycor}
\label{carclopens}
\textnormal{[CH].} For all $x$ in $\wstar$ we have $\cardinality{\clopens{\wstar \setminus \singleton{x}}} = 2^{\w_1}$. \qed
\end{mycor}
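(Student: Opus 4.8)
The plan is to establish the two bounds $\cardinality{\clopens{\wstar \setminus \singleton{x}}} \le 2^{\w_1}$ and $\cardinality{\clopens{\wstar \setminus \singleton{x}}} \ge 2^{\w_1}$ separately. The upper bound is the routine half: the space $\wstar$ is the Stone space of $\powerSet{\w}/\mathrm{fin}$ and hence has weight $\cont$, which equals $\w_1$ under CH, so the open subspace $\wstar \setminus \singleton{x}$ also has weight at most $\w_1$. Fixing a base of size $\w_1$, every open set equals the union of the base elements it contains, so the assignment of an open set to this subfamily is injective; as there are only $2^{\w_1}$ subfamilies, we get at most $2^{\w_1}$ open sets and a fortiori at most $2^{\w_1}$ clopen sets.

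For the lower bound I would run the recursion of Lemma \ref{butterfly} coherently along every branch of the tree $2^{\w_1}$ and extract one wing per branch. The key is to arrange the construction so that all data produced strictly before the assignment step of stage $\alpha$ depends only on $f \restriction \alpha$, for $f \in 2^{\w_1}$. Concretely, at stage $\alpha$ one first forms the partition $C, D$ of $\wstar$ separating the partial wings built from $f \restriction \alpha$, and then selects, inside the $G_\delta$-set $U_\alpha \setminus \Union_{\beta < \alpha}(A^f_\beta \cup B^f_\beta)$ — which contains $x$ and therefore has non-empty interior — a pair of disjoint non-empty clopen sets $C'_{f \restriction \alpha}, D'_{f \restriction \alpha}$ avoiding $x$; all of this uses only $f \restriction \alpha$. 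Only now is the bit $f(\alpha)$ consulted, to decide which of the two cores is placed into the $A$-wing and which into the $B$-wing. Setting $A^f = \Union_{\alpha < \w_1} A^f_\alpha$, Lemma \ref{butterfly} guarantees that $A^f$ and its complement $B^f$ form an open partition of $\wstar \setminus \singleton{x}$, so every $A^f$ is a clopen subset.

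It then remains to check that $f \mapsto A^f$ is injective, and this is where the coherence pays off. If $f \neq g$ first differ at $\alpha_0$, then both runs agree through the selection of the core pair at stage $\alpha_0$, producing the same non-empty clopen set $E := C'_{f \restriction \alpha_0} = C'_{g \restriction \alpha_0}$; the opposite bits $f(\alpha_0) \neq g(\alpha_0)$ then drive $E$ into opposite wings, so that $E \subset A^f$ while $E \subset B^g$, whence $E \cap A^g = \emptyset$. Since $E \neq \emptyset$, this already gives $A^f \neq A^g$, and the map is injective, producing the required $2^{\w_1}$ distinct clopen sets.

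The step I expect to need the most care is precisely this coherent bookkeeping: one must verify that deferring the use of $f(\alpha)$ until after the cores are chosen really does make the stage-$\alpha$ data a function of $f \restriction \alpha$ alone, and that interchanging the two cores between the wings never disturbs the invariants of Lemma \ref{butterfly}. The latter holds because $C'$ and $D'$ enter the construction in completely symmetric roles, as two disjoint non-empty clopen subsets of the freshly exposed $G_\delta$-interior that both avoid $x$; swapping them leaves every disjointness and covering requirement of the recursion intact.
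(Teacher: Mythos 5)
Your proposal is correct and follows essentially the same route as the paper, which obtains the lower bound by observing that at each stage $\alpha$ of the butterfly recursion one may freely choose which of the two cores $C'$, $D'$ to place in the $A$-wing; your coherent tree-indexed bookkeeping and the injectivity check via the first point of disagreement are exactly the details the paper leaves implicit. The upper bound via the weight of $\wstar$ being $\cont = \w_1$ under CH is the standard counting argument the paper takes for granted.
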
 

Compact clopen sets of $\wstar \setminus \singleton{x}$ are of course homeomorphic to $\wstar$. The next lemma describes how the non-compact clopen sets look like.

\begin{mylem}
\label{lemma1}
\textnormal{[CH].} For every $x$ in $\wstar$, the one-point compactification of a clopen non-compact subset of $\wstar \setminus \singleton{x}$ is homeomorphic to $\wstar$.
\end{mylem}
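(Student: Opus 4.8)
The plan is to realise the one-point compactification of $A$ as the closure $\closureIn{A}{\wstar}$ and then to identify $\closureIn{A}{\wstar}$ with $\wstar$ by means of Parovi\v{c}enko's characterisation theorem. First I would record the elementary features of $\closure{A} := \closureIn{A}{\wstar}$. Since $A$ is clopen, hence closed, in the open subspace $\wstar \setminus \singleton{x}$, its boundary in $\wstar$ is contained in $\singleton{x}$; as $A$ is non-compact it is not closed in $\wstar$, so $\closure{A} = A \union \singleton{x}$ with $x \notin A$. Because $A$ is locally compact, non-compact, and dense and open in the compact Hausdorff space $\closure{A}$ whose remainder $\closure{A} \setminus A = \singleton{x}$ is a single point, the space $\closure{A}$ is, up to the identity on $A$, the one-point compactification of $A$. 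It therefore suffices to show $\closure{A} \cong \wstar$.

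Next I would verify that $\closure{A}$ has all the Parovi\v{c}enko properties save possibly the $G_{\w_1}$-property. It is compact (closed in $\wstar$), zero-dimensional and Hausdorff (a subspace of $\wstar$), and an $F$-space by fact (3), being a closed subspace of the compact, hence normal, $F$-space $\wstar$. It has no isolated points, since $A$ is open in the crowded space $\wstar$ and $x$ is a limit point of $A$. Finally $\closure{A}$ is an infinite compact $F$-space, so by fact (4) it contains a copy of $\beta \w$ and hence has weight at least $\cont$; being a subspace of $\wstar$ it has weight at most $\cont$, so $w(\closure{A}) = \cont$.

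It remains to prove that $\closure{A}$ is a $G_{\w_1}$-space, i.e.\ that every non-empty $G_\delta$-set $G \subset \closure{A}$ has non-empty interior, and this is where the real work lies. If $G$ meets $A$, then writing $G = \bigcap_n \p{W_n \intersect \closure{A}}$ with $W_n$ open in $\wstar$, one sees that $G \intersect A = \bigcap_n \p{W_n \intersect A}$ is a non-empty $G_\delta$-set of $\wstar$; by the $G_{\w_1}$-property of $\wstar$ it has non-empty interior $O$ in $\wstar$, and since $O \subset A$ is open in $\closure{A}$ and contained in $G$, we are done. The only other possibility is $G = \singleton{x}$, so the heart of the matter is to show that $\singleton{x}$ is \emph{not} a $G_\delta$ in $\closure{A}$; I expect this to be the main obstacle.

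Suppose it were. Then $A = \closure{A} \setminus \singleton{x}$ is an increasing union of compacta, and using zero-dimensionality of $\wstar$ to fatten each compactum to a clopen set of $\wstar$ lying inside the open set $A$, we may write $A = \Union_n K_n$ with each $K_n$ clopen in $\wstar$ and $K_n \subset K_{n+1}$. Put $L_n = K_n \setminus K_{n-1}$, so that $A = \Union_n L_n$ is a partition of $A$ into clopen sets of $\wstar$, infinitely many of them non-empty because $A$ is non-compact. Now split the index set into two infinite sets $I$ and $J$, each containing infinitely many indices $n$ with $L_n \neq \emptyset$, and set $P = \Union_{n \in I} L_n$ and $Q = \Union_{n \in J} L_n$. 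These are disjoint open $F_\sigma$-sets of $\wstar$, so by the $F$-space property (fact (2)) their closures are disjoint. As $\closure{P} \union \closure{Q} = \closure{A} = A \union \singleton{x}$, the point $x$ lies in exactly one of them, say $x \in \closure{P}$; then $\closure{Q}$ is disjoint from $\closure{P} \supset P$, whence $\closure{Q} \intersect A = Q$, and since $\closure{Q} \subset A$ we get $Q = \closure{Q}$, a compact set. But $Q$ is partitioned into infinitely many non-empty clopen sets and so is not compact — a contradiction. This shows that $x$ is not a $G_\delta$-point, so $\closure{A}$ is a Parovi\v{c}enko space of weight $\cont$, and Parovi\v{c}enko's theorem yields $\closure{A} \cong \wstar$.
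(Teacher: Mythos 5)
Your proof is correct and follows the same overall strategy as the paper: represent the one-point compactification as $\closure{A}=A\cup\singleton{x}$ inside $\wstar$, check the Parovi\v{c}enko properties, and invoke Parovi\v{c}enko's theorem. The only place you diverge is the key sub-step that $\singleton{x}$ is not a $G_\delta$ in $\closure{A}$: the paper disposes of this in one line by noting that a $G_\delta$-point of a compact Hausdorff space has countable character, which would produce a non-trivial convergent sequence in the compact $F$-space $\wstar$, contradicting fact (4). Your argument instead shows directly that $A$ cannot be a countable union of $\wstar$-clopen sets, by splitting such a union into two disjoint open $F_\sigma$-sets and using the $F$-space separation property to force one of them to be compact yet partitioned into infinitely many non-empty clopen pieces. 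This is slightly longer but entirely sound, and it has the advantage of being exactly the argument that survives the passage to $S_\kappa$: it is, in the case $\kappa=\w_1$, the proof of Lemma \ref{nicelemma}, which the paper needs later precisely because the convergent-sequence shortcut is unavailable for $F_\kappa$-spaces.
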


\begin{proof}
Let $A$ be a clopen non-compact subset of $\wstar \setminus \singleton{x}$. Taking $A \cup \singleton{x}$, a closed subset of $\wstar$, as representative of its one-point compactification, we see that it is a zero-dimensional compact $F$-space of weight $\cont$ without isolated points. 

For the $G_{\w_1}$-space property, suppose that $U \subset A \cup \singleton{x}$ is a non-empty $G_\delta$-set. If $U$ has empty intersection with $A$, then the singleton $U=\singleton{x}$ is a $G_\delta$-set, and hence has countable character  in the compact Hausdorff space $A \cup \singleton{x}$. It follows that there is a non-trivial sequence in $\wstar$ converging to $x$, a contradiction. Thus, $U$ intersects the open set $A$ and their intersection is a non-empty $G_\delta$-set of $\wstar$ with non-empty interior.

An application of Parovi\v{c}enko's theorem completes the proof.
\end{proof}

In absence of CH, the above proof still shows that the one-point compactification of any clopen non-compact subset of $\wstar \setminus \singleton{x}$ is a Parovi\v{c}enko space of weight $\cont$. 

It also follows that $\wstar$ contains $P$-points under CH, as the next lemma shows. Even more, we have another proof of \cite[2.3]{Three} that under CH for every point $x$ of $\wstar$ there is a clopen copy of $\wstar$ contained in $\wstar$ such that $x$ is a $P$-point with respect to that copy.

\begin{mycor}
\label{mycor1}
\textnormal{[CH].} For every point $x$ of $\wstar$, at least one of its wings together with $x$ itself is a copy of $\wstar$ such that $x$ is a $P$-point with respect to that copy. 
\end{mycor}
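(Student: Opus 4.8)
The plan is to combine the Butterfly Lemma with Lemma~\ref{lemma1} and exploit the freedom in the recursive construction to control where $x$ sits. By Lemma~\ref{butterfly}, the point $x$ has wings $A$ and $B$ partitioning $\wstar \setminus \singleton{x}$, and by Lemma~\ref{lemma1} each of $A \cup \singleton{x}$ and $B \cup \singleton{x}$ is already a copy of $\wstar$. So the only thing left to arrange is that $x$ is a $P$-point with respect to at least one of these copies. Recall from fact~(6) in Section~\ref{section2} that in a $\kappa$-Parovi\v{c}enko space a point is a $P_\kappa$-point if and only if it has a nested neighbourhood base; here $\kappa = \w_1$, so being a $P$-point in $A \cup \singleton{x}$ amounts to $x$ having a nested clopen neighbourhood base \emph{relative to that copy}.

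First I would revisit the recursion in the proof of the Butterfly Lemma and track the trace of the clopen base sets $U_\alpha$ on the wings. At stage $\alpha$, in addition to splitting off $C'$ and $D'$, I would like the sets $A_\alpha \cup \singleton{x}$ (and $B_\alpha \cup \singleton{x}$) to form, in the limit, a nested neighbourhood base of $x$ in one of the two wings. The natural device is to carry out the construction so that the restrictions of the $U_\alpha$ to, say, the $A$-side are refined into a $\subseteq$-decreasing (nested) family: at each stage one shrinks the candidate neighbourhood inside the $G_\delta$-set $U_\alpha \setminus \Union_{\beta<\alpha}(A_\beta \cup B_\beta)$, which by the Parovi\v{c}enko property has non-empty interior, so there is room to choose the new clopen piece inside all previously chosen ones on that side. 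Concretely I would maintain, alongside $A_\alpha, B_\alpha$, an auxiliary decreasing sequence $\set{V_\alpha}:{\alpha<\w_1}$ of clopen sets with $x \in V_\alpha \subseteq U_\alpha$ and $V_\beta \supseteq V_\alpha$ for $\beta < \alpha$, arranging that $V_\alpha \cap (A\cup\singleton{x})$ is a nested neighbourhood base of $x$ in the copy $A \cup \singleton{x}$.

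The key point making this possible is that the intersection $U_\alpha \setminus \Union_{\beta<\alpha}(A_\beta \cup B_\beta)$ together with the previously chosen $V_\beta$ is again a non-empty $G_\delta$-set containing $x$ (a countable intersection, since $\alpha<\w_1$), so by the $G_{\w_1}$-property it has non-empty interior and we can pick $V_\alpha$ inside it. Once the wings and the nested family are built, I would verify that $\set{V_\alpha \cap (A\cup\singleton{x})}:{\alpha<\w_1}$ is genuinely a neighbourhood base of $x$ in $A \cup \singleton{x}$: any clopen neighbourhood of $x$ in that copy extends (by taking closures in $\wstar$) to a neighbourhood of $x$ in $\wstar$, hence contains some $U_\alpha$, hence contains $V_\alpha$, giving the base property. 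Nestedness then yields that $x$ is a $P$-point of the copy $A\cup\singleton{x}$, and Lemma~\ref{lemma1} confirms this copy is homeomorphic to $\wstar$.

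The main obstacle I anticipate is the bookkeeping needed to guarantee nestedness \emph{and} simultaneously keep all the Butterfly-Lemma requirements (disjointness of all $(A_\alpha,B_\beta)$-pairs and the covering condition $X \setminus U_\alpha \subseteq A_\alpha \cup B_\alpha$) intact. In particular one must check that shrinking to the nested $V_\alpha$ does not conflict with the demand that both wings limit onto $x$ and that the recursion still produces clopen, non-compact wings. A subtle point is that we only control one wing: there is no reason both $A$ and $B$ can be made to give $x$ a nested base at once, which is exactly why the statement asserts only that \emph{at least one} wing works. I would therefore commit the nesting to the $A$-side from the outset and let the $B$-side absorb whatever is left, checking at the end that $A$ is non-compact (so that $A \cup \singleton{x}$ is a genuine copy of $\wstar$ rather than a one-point space) by ensuring $A_\alpha \cap U_\alpha \neq \emptyset$ is preserved throughout.
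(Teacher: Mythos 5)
There is a genuine gap in your construction of the nested family $\Set{V_\alpha}$. You require each $V_\alpha$ to be a clopen subset of $\wstar$ with $x \in V_\alpha \subseteq U_\alpha$ and $V_\alpha \subseteq V_\beta$ for $\beta<\alpha$. But such a family would be a nested clopen neighbourhood base of $x$ \emph{in $\wstar$}, which by fact (6) forces $x$ to be a $P$-point of $\wstar$ --- and the corollary must cover non-$P$-points. Concretely, the recursion breaks at a limit stage $\lambda$: for a non-$P$-point $x$ there is a countable $\lambda$ with $x \notin \interior{\bigcap_{\beta<\lambda}U_\beta} \supseteq \interior{\bigcap_{\beta<\lambda}V_\beta}$, so no clopen $V_\lambda$ containing $x$ fits inside all previous $V_\beta$. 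The $G_{\w_1}$-property you invoke only yields that the relevant $G_\delta$-set has \emph{non-empty} interior, not that $x$ lies in that interior; picking $V_\lambda$ in the interior then loses the property $x\in V_\lambda$, and the family ceases to be a neighbourhood base of $x$. Retreating to sets clopen only relative to the eventual wing does not rescue the plan either: $x$ is a $P$-point of $A\cup\singleton{x}$ precisely when no countable union of compact clopen subsets of $A$ limits onto $x$, and there are $2^{\w_1}$ such countable families --- too many to diagonalise against in an $\w_1$-recursion, and in any case invisible during the construction because $A$ is only determined at the end. Finally, note that the statement (see its generalisation, Corollary \ref{mycor12}) is meant to apply to the wings of a \emph{given} butterfly, whereas your argument would at best manufacture one special butterfly with a good wing.

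The paper's proof avoids all of this with a soft, two-line argument requiring no modification of the butterfly construction: by Lemma \ref{lemma1} each wing together with $x$ is already a copy of $\wstar$, and if $x$ were a non-$P$-point with respect to \emph{both} wings, then by fact (6) there would be open $F_\sigma$-sets $W_A\subseteq A$ and $W_B\subseteq B$, each with $x$ in its closure; these are disjoint open $F_\sigma$-sets of $\wstar$ with non-disjoint closures, contradicting the $F$-space property. You should replace your recursive scheme by this dichotomy argument; your correct observation that ``there is no reason both wings can be controlled at once'' is in fact the whole content of the proof, read contrapositively.
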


\begin{proof}
This follows from the last lemma together with the observation that if $x$ was a non-$P$-point with respect to both of its wings, it would be in the closure of two disjoint open $F_\sigma$-sets, contradicting the $F$-space property of $\wstar$.
\end{proof}

\section{\texorpdfstring{\bf Butterflies in $\mathbf{S_\kappa \setminus \singleton{x}}$}{Butterflies in S-kappa}}
\label{section4}

In this section we generalise results from the previous section about $\wstar = S_{\w_1}$ to general $S_\kappa$, assuming $\kappa=\kappa^{<\kappa}$ throughout. The reader is encouraged to check that the Butterfly Lemma and its immediate consequences carry over nicely to $S_\kappa$.

\begin{mylem}[Butterfly Lemma]
\label{butterfly2}
Assume $\kappa=\kappa^{<\kappa}$. Every point of $S_\kappa$ is a strong butterfly point. \qed
\end{mylem}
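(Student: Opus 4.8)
The plan is to reprise the proof of the classical Butterfly Lemma (Lemma \ref{butterfly}) almost verbatim, simply replacing the countability assumption inherent in the CH case with the general cardinal arithmetic $\kappa=\kappa^{<\kappa}$, and invoking the $F_\kappa$- and $G_\kappa$-space properties of $S_\kappa$ in place of the $F$- and $G_{\w_1}$-properties of $\wstar$. Fix a point $x \in S_\kappa$. Since $S_\kappa$ has weight $\kappa$, we may fix a neighbourhood base $\set{U_\alpha}:{\alpha < \kappa}$ of $x$ consisting of clopen sets. By transfinite recursion of length $\kappa$ I would build families of clopen sets $\set{A_\alpha}:{\alpha < \kappa}$ and $\set{B_\alpha}:{\alpha < \kappa}$, none containing $x$, so that all $(A_\alpha, B_\beta)$-pairs are disjoint and so that
$$ S_\kappa \setminus U_\alpha \subset A_\alpha \cup B_\alpha \; \text{ and } \; A_\alpha \cap U_\alpha \neq \emptyset \neq B_\alpha \cap U_\alpha \; \text{ for all } \alpha < \kappa.$$
Setting $A = \Union_{\alpha < \kappa} A_\alpha$ and $B = \Union_{\alpha < \kappa} B_\alpha$ then yields disjoint open sets whose union is $S_\kappa \setminus \singleton{x}$, with both wings limiting onto $x$, exhibiting $x$ as a strong butterfly point.

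The recursive step is where the hypothesis $\kappa=\kappa^{<\kappa}$ does the work. At stage $\alpha < \kappa$, the already-constructed unions $\Union_{\beta < \alpha} A_\beta$ and $\Union_{\beta < \alpha} B_\beta$ are open sets of type at most $\cardinality{\alpha} < \kappa$, hence disjoint open sets of type less than $\kappa$. In the proof of Lemma \ref{butterfly} the key separation step used the $F$-space property to split these into complementary clopen pieces; here the analogous move is provided by the $F_\kappa$-space property together with fact $(2')$: disjoint open sets of type less than $\kappa$ have disjoint closures in an $F_\kappa$-space, and in a compact (hence normal) space this lets me find complementary clopen sets $C$ and $D$ with $\Union_{\beta < \alpha} A_\beta \subset C$ and $\Union_{\beta < \alpha} B_\beta \subset D$. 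The set $U_\alpha \setminus \Union_{\beta < \alpha}(A_\beta \cup B_\beta)$ is a non-empty intersection of fewer than $\kappa$ open sets containing $x$, so by the $G_\kappa$-space property it has non-empty interior; inside it I choose disjoint non-empty clopen sets $C', D'$ avoiding $x$, and set $A_\alpha = (C \setminus U_\alpha) \cup C'$ and $B_\alpha = (D \setminus U_\alpha) \cup B'$, where $B' = D'$.

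I do not expect a genuine obstacle, since the statement is already flagged as routine (the lemma is stated with \qed, and the text invites the reader to verify it). The only points requiring care are bookkeeping ones: confirming that at limit stages the accumulated unions still have type strictly below $\kappa$ — which holds precisely because $\kappa=\kappa^{<\kappa}$ forces $\kappa$ to be regular, so $\cardinality{\alpha} < \kappa$ for every $\alpha < \kappa$ — and checking that the inductive disjointness of all $(A_\alpha, B_\beta)$-pairs is preserved, which follows because $C'$ and $D'$ are chosen inside the region not yet covered by any earlier $A_\beta$ or $B_\beta$ and because $C, D$ partition $S_\kappa$ consistently with the earlier sets. Once these verifications are in place the argument closes exactly as in the CH case.
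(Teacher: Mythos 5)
Your proposal is correct and follows exactly the route the paper intends: the recursion of Lemma \ref{butterfly} of length $\kappa$, with the $F$-space separation step replaced by the $F_\kappa$-property applied to the accumulated unions (which have type $\cardinality{\alpha}<\kappa$) and the $G_\delta$-interior step replaced by the $G_\kappa$-property applied to $U_\alpha \setminus \Union_{\beta<\alpha}(A_\beta\cup B_\beta)$, a non-empty intersection of fewer than $\kappa$ clopen sets containing $x$. (Only a cosmetic remark: $\cardinality{\alpha}<\kappa$ for $\alpha<\kappa$ already holds because $\kappa$ is a cardinal; regularity is not needed for that particular point.)
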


\begin{mycor}
\label{nonStone2}
Assume $\kappa=\kappa^{<\kappa}$. For every point $x$ of $S_\kappa$ the subspace $S_\kappa \setminus \singleton{x}$ is not $\Cstar$-embedded in $S_\kappa$. \qed
\end{mycor}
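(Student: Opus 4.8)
The plan is to deduce the corollary directly from the Butterfly Lemma (Lemma \ref{butterfly2}), exactly as Fine and Gillman deduced Corollary \ref{nonStone} from the classical Butterfly Lemma. To show that $S_\kappa \setminus \singleton{x}$ fails to be $\Cstar$-embedded in $S_\kappa$, it suffices to exhibit a single bounded continuous real-valued function on $S_\kappa \setminus \singleton{x}$ which admits no continuous extension to $S_\kappa$.

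First I would invoke Lemma \ref{butterfly2} to fix a decomposition of $S_\kappa \setminus \singleton{x}$ into wings $A$ and $B$, that is, disjoint open sets with $A \cup B = S_\kappa \setminus \singleton{x}$ and $\closure{A} \cap \closure{B} = \singleton{x}$, all closures taken in $S_\kappa$. As recorded immediately after the definition of a strong butterfly point, the wings $A$ and $B$ are clopen in the subspace $S_\kappa \setminus \singleton{x}$, so the function $f \colon S_\kappa \setminus \singleton{x} \to \R$ taking the value $0$ on $A$ and the value $1$ on $B$ is well defined, continuous and bounded.

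Next I would argue that $f$ cannot be extended. Since $x$ lies in $\closure{A} \cap \closure{B}$, both wings limit onto $x$, so there are nets in $A$ and in $B$ converging to $x$ in $S_\kappa$. Any continuous extension $\tilde{f}$ of $f$ to $S_\kappa$ would then have to satisfy $\tilde{f}(x) = 0$, reading off the limit along $A$, and simultaneously $\tilde{f}(x) = 1$, reading off the limit along $B$, which is absurd. Hence no continuous extension of $f$ to $S_\kappa$ exists, and $S_\kappa \setminus \singleton{x}$ is not $\Cstar$-embedded.

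I do not expect any genuine obstacle here: the entire difficulty has already been packaged into the Butterfly Lemma, whose construction delivers the separating wings. The deduction is merely the standard observation that a clopen partition of $S_\kappa \setminus \singleton{x}$ whose two pieces share the single limit point $x$ in the ambient space obstructs the continuous extension of the associated two-valued indicator function. This is precisely why the corollary is stated as an immediate consequence.
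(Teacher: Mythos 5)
Your proposal is correct and is precisely the intended deduction: the paper states Corollary \ref{nonStone2} as an immediate consequence of the Butterfly Lemma \ref{butterfly2}, and the argument it leaves implicit is exactly yours, namely that the $\{0,1\}$-valued indicator of one wing is a bounded continuous function on $S_\kappa \setminus \singleton{x}$ that cannot be extended because both wings accumulate at $x$. No gaps; this matches the Fine--Gillman deduction of Corollary \ref{nonStone}.
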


\begin{mycor}
\label{carclopens2}
Assume $\kappa=\kappa^{<\kappa}$. For every point $x$ in $S_\kappa$ we have $\cardinality{\clopens{S_\kappa \setminus \singleton{x}}} = 2^{\kappa}$. \qed
\end{mycor}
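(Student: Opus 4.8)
The plan is to establish the two inequalities $\cardinality{\clopens{S_\kappa \setminus \singleton{x}}} \leq 2^\kappa$ and $\cardinality{\clopens{S_\kappa \setminus \singleton{x}}} \geq 2^\kappa$ separately. The upper bound is a soft weight argument, whereas the lower bound refines the recursion of the Butterfly Lemma by attaching a binary choice to each of its $\kappa$ stages, so that the $2^\kappa$ branches of the resulting binary tree yield $2^\kappa$ pairwise distinct clopen wings.

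For the upper bound I would use that $S_\kappa$ has weight $\kappa$. Fixing a base of clopen sets of size $\kappa$, every open subset of $S_\kappa$ is a union of a subfamily of this base, so $S_\kappa$ has at most $2^\kappa$ open sets. Since $S_\kappa \setminus \singleton{x}$ is an open subspace, each of its clopen subsets is in particular open in $S_\kappa$, whence $\cardinality{\clopens{S_\kappa \setminus \singleton{x}}} \leq 2^\kappa$.

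For the lower bound I would rerun the recursion of Lemma \ref{butterfly2}, but at stage $\alpha < \kappa$ record the two disjoint non-empty clopen sets produced inside $U_\alpha \setminus \Union_{\beta<\alpha}(A_\beta \union B_\beta)$ as a labelled pair $\p{C'_\alpha, D'_\alpha}$, both avoiding $x$. Given any $f \colon \kappa \to 2$, I then assemble a butterfly decomposition in which, at stage $\alpha$, the marker $C'_\alpha$ is assigned to the wing $A^f$ and $D'_\alpha$ to $B^f$ when $f(\alpha) = 0$, with the two roles interchanged when $f(\alpha) = 1$; concretely $A^f_\alpha = (C \setminus U_\alpha) \union C'_\alpha$ and $B^f_\alpha = (D \setminus U_\alpha) \union D'_\alpha$ in the first case and the markers swapped in the second, where $(C, D)$ is the clopen partition of $S_\kappa$ separating $\Union_{\beta<\alpha} A^f_\beta$ from $\Union_{\beta<\alpha} B^f_\beta$ supplied (as in Lemma \ref{butterfly2}) by the $F_\kappa$-space property. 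Exactly as in Lemma \ref{butterfly2}, each $A^f = \Union_{\alpha<\kappa} A^f_\alpha$ is then a wing of a strong butterfly decomposition of $x$, hence clopen in $S_\kappa \setminus \singleton{x}$.

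The crux, and the step I expect to need the most care, is to arrange that the markers $C'_\alpha, D'_\alpha$ can be fixed once and for all, independently of $f$, so that distinct $f$ genuinely compare the same pieces. The key observation is that the union $A^f_\beta \union B^f_\beta$ collapses to $(S_\kappa \setminus U_\beta) \union C'_\beta \union D'_\beta$, which involves neither $f$ nor the partition $(C,D)$; consequently the region $U_\alpha \setminus \Union_{\beta<\alpha}(A^f_\beta \union B^f_\beta)$ in which stage $\alpha$ operates is the same for every $f$, and a single recursion (ignoring $f$) suffices to pin down all the markers while preserving the $G_\kappa$-argument that this region has non-empty interior. Granting this, injectivity of $f \mapsto A^f$ is immediate: if $f(\alpha) \neq g(\alpha)$, say $f(\alpha)=0$ and $g(\alpha)=1$, then $C'_\alpha \subset A^f$ while $C'_\alpha \subset B^g$, and since the wings $A^g, B^g$ are disjoint the non-empty set $C'_\alpha$ witnesses $A^f \neq A^g$. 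This exhibits $2^\kappa$ distinct clopen subsets and, together with the upper bound, proves $\cardinality{\clopens{S_\kappa \setminus \singleton{x}}} = 2^\kappa$.
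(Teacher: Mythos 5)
Your proof is correct and follows essentially the same route as the paper: the paper obtains the lower bound exactly by observing that at each stage $\alpha<\kappa$ of the butterfly recursion one has a free binary choice of which of the two markers $C'_\alpha$, $D'_\alpha$ to attach to the $A$-wing, and the upper bound from the weight of $S_\kappa$ is the same soft count. Your careful verification that the markers can be fixed once and for all, independently of the branch $f$ (because $A^f_\beta\cup B^f_\beta$ does not depend on $f$), is precisely the detail the paper leaves to the reader.
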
 

The remaining part of this section is devoted to the proof of the following generalisation of Lemma \ref{lemma1}. 

\begin{mylem}
\label{superlemma}
Assume $\kappa = \kappa^{<\kappa}$ and let $x$ any point in $S_\kappa$. The one-point compactification of a clopen non-compact subset of $S_\kappa \setminus \singleton{x}$ is homeomorphic to $S_\kappa$.
\end{mylem}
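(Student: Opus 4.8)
The plan is to imitate the proof of Lemma \ref{lemma1} but to verify the characterising properties of a $\kappa$-Parovi\v{c}enko space rather than those of a Parovi\v{c}enko space, and then to invoke the Negrepontis--Dow uniqueness theorem instead of Parovi\v{c}enko's theorem. Let $A$ be a clopen non-compact subset of $S_\kappa \setminus \singleton{x}$, and take $Y := A \cup \singleton{x}$, a closed subspace of $S_\kappa$, as a concrete representative of the one-point compactification of $A$. Since $S_\kappa$ is itself compact, $Y$ is compact; it is Hausdorff and zero-dimensional as a subspace of a zero-dimensional compact Hausdorff space, and it inherits having no isolated points (the point $x$ is not isolated because $A$ is non-compact and hence not closed in $S_\kappa$, so $x \in \closure{A}$, while points of $A$ are non-isolated since $S_\kappa$ has none). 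By fact (3) in Section \ref{section2}, closed subspaces of compact (hence normal) $F$-spaces are again $F$-spaces, so $Y$ is a compact zero-dimensional $F$-space of weight $\kappa$ without isolated points. It therefore only remains to check that $Y$ is an $F_\kappa$-space and a $G_\kappa$-space.

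For the $G_\kappa$-property I would argue exactly as in Lemma \ref{lemma1}, but with countable intersections replaced by intersections of fewer than $\kappa$ many open sets. Suppose $U \subset Y$ is a non-empty intersection of fewer than $\kappa$ open sets. If $U \intersect A = \emptySet$, then $U = \singleton{x}$ is such an intersection, so $x$ has a neighbourhood base of size less than $\kappa$ in $Y$; pulling this back shows $x$ lies on the boundary of an open set of type less than $\kappa$ in $S_\kappa$, which by fact (6) makes $x$ a $P_\kappa$-point and in fact forces a convergent net of small character at $x$ inside the compact $F$-space $S_\kappa$, contradicting the $F_\kappa$-/$F$-space property (analogous to the sequence contradiction in Lemma \ref{lemma1}). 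Hence $U$ meets the open set $A$, and $U \intersect A$ is a non-empty intersection of fewer than $\kappa$ open sets of $S_\kappa$; the $G_\kappa$-property of $S_\kappa$ gives it non-empty interior in $S_\kappa$, which is then interior in $Y$ as well. For the $F_\kappa$-property I would use characterisation $(2')$ together with normality: it suffices to show that two disjoint open subsets $V,W$ of $Y$ of type less than $\kappa$ have disjoint closures. Each such set, being a union of fewer than $\kappa$ clopen subsets of the clopen set $A$, is an open subset of $S_\kappa$ of type less than $\kappa$; since $S_\kappa$ is an $F_\kappa$-space, $\closureIn{V}{S_\kappa}$ and $\closureIn{W}{S_\kappa}$ are disjoint, and intersecting with the closed set $Y$ gives disjoint closures in $Y$.

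The main obstacle I expect is the careful handling of the single point $x$ and its type, both in ruling out $U = \singleton{x}$ in the $G_\kappa$-argument and in checking that the clopen structure of $Y$ near $x$ does not secretly create an open set of small type whose behaviour violates the $F_\kappa$- or $G_\kappa$-axioms. Concretely, one must confirm that the types computed in the subspace $Y$ agree with types computed in $S_\kappa$: a set open in $Y$ of type $\tau < \kappa$ containing $x$ could a priori have smaller type in $Y$ than in $S_\kappa$, so one has to argue that any such set still meets $A$ in an open set of type less than $\kappa$ and that the contribution of $x$ is controlled by the generalised convergent-net argument above. Once these type bookkeeping points are settled, the verification is routine. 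Having established that $Y$ is a compact zero-dimensional $F_\kappa$- and $G_\kappa$-space without isolated points of weight $\kappa$, it is a $\kappa$-Parovi\v{c}enko space of weight $\kappa$, and the uniqueness theorem of Negrepontis and Dow (valid under the standing hypothesis $\kappa = \kappa^{<\kappa}$) identifies it with $S_\kappa$, completing the proof.
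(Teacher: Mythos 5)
Your overall strategy --- verify the $\kappa$-Parovi\v{c}enko properties of $Y=A\cup\singleton{x}$ and invoke the Negrepontis--Dow uniqueness theorem --- is the paper's strategy, but the two key verifications as you present them do not go through, and the difficulties you defer as routine bookkeeping are exactly where the paper has to do real work. The central missing ingredient is Lemma \ref{nicelemma}: a clopen non-compact subset of $S_\kappa\setminus\singleton{x}$ has $S_\kappa$-type exactly $\kappa$. In your $G_\kappa$-argument, the case $U\cap A=\emptyset$ yields that $A=\bigcup_{\alpha<\lambda}(Y\setminus U_\alpha)$ is a union of fewer than $\kappa$ compact open (hence $S_\kappa$-clopen) sets, i.e.\ that $A$ has small $S_\kappa$-type; ruling this out is precisely the content of Lemma \ref{nicelemma}, whose proof is a genuine recursion (pairwise disjoint clopen sets $V_\alpha$ escaping every initial segment, split along two cofinal index sets to violate the $F_\kappa$-property). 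Your substitute --- ``a convergent net of small character at $x$, contradicting the $F$-space property'' --- is not a contradiction for $\kappa>\w_1$: compact $F$-spaces exclude convergent \emph{sequences}, not transfinite convergent nets, and closed subspaces of $S_\kappa$ (for instance copies of $\beta\w$) do contain points of character less than $\kappa$. This is exactly the failure the paper flags before the proof: the two facts powering Lemma \ref{lemma1} (closed-heredity of the $F$-space property and the absence of small-character points in closed subspaces) do not survive the passage from $F$- to $F_\kappa$-spaces. (Your appeal to fact (6) is also reversed: lying on the boundary of an open set of type less than $\kappa$ makes $x$ a \emph{non}-$P_\kappa$-point, and in any case neither alternative is by itself contradictory.)

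The $F_\kappa$-verification has the same gap. You claim that every open subset of $Y$ of $Y$-type less than $\kappa$ is an open subset of $S_\kappa$ of type less than $\kappa$. That is true for sets missing $x$ (their clopen pieces are compact open in $A$, hence $S_\kappa$-clopen), but false for sets containing $x$: a $Y$-clopen set containing $x$ has $Y$-type $1$ while, by Lemma \ref{nicelemma}, its trace on $A$ has $S_\kappa$-type $\kappa$, so the $F_\kappa$-property of $S_\kappa$ cannot be applied to it. The paper therefore splits into two cases and, when $x\in U$, works with the $A$-types of $U\cap A$ and $V\cap A$, using that the open subspace $A$ of $S_\kappa$ is itself an $F_\kappa$-space by \cite[14.1]{Ultrafilters} together with $x\notin\closure{V}$. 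Your proposal needs both Lemma \ref{nicelemma} and this case split before it becomes a proof.
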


The proof, however, is more delicate than in the case of $\wstar$. The challenge lies in the fact that Lemma \ref{lemma1} used as corner stones two facts about $F$-spaces which do not carry through to general $F_\kappa$-spaces: In normal spaces, the $F$-space property is closed-hereditary and every infinite closed subset of $S_{\w_1}$ has the same cardinality as $S_{\w_1}$. Both assertions do not hold for $F_\kappa$-spaces since $S_\kappa$ contains, being an infinite compact $F$-space, closed copies of $\beta \w$.

The following lemma is crucial in circumventing these obstacles. 

\begin{mylem}
\label{nicelemma}
Assume $\kappa = \kappa^{<\kappa}$ and let $x$ any point in $S_\kappa$. A clopen, non-compact subset of $S_\kappa \setminus \singleton{x}$ is of $S_\kappa$-type $\kappa$. 
\end{mylem}

\begin{proof}
Suppose for a contradiction that there exists a clopen, non-compact subset $A$ of $S_\kappa \setminus \singleton{x}$ of $S_\kappa$-type $\tau < \kappa$. Find a representation 
$$A=\bigcup_{\alpha < \tau} A_\alpha$$ where all $A_\alpha$ are clopen subsets of $S_\kappa$. We first claim that there is a collection $\set{V_\alpha}:{\alpha<\tau}$ of pairwise disjoint clopen sets of $S_\kappa$ such that $V_\alpha \subset A \setminus \bigcup_{\beta < \alpha}A_\beta$ for all $\alpha < \tau$. 

We proceed by transfinite recursion. Choose a clopen subset $V_0$ in the non-empty open set $A \setminus A_0$. Now consider $\alpha < \tau$ and suppose that $V_\beta$ have been defined for all $\beta < \alpha$. By minimality of $\tau$, the set $U_\alpha= \bigcup_{\beta < \alpha} A_\beta \cup V_\beta$ is a proper subset of $A$, from which it follows by $(5)$b of Section \ref{section2} that $U_\alpha$ is not dense in $A$. Thus, there is a non-empty clopen set $V_\alpha$ in the interior of $A \setminus U_\alpha$. This completes the recursion and proves the claim.

Now, let $f$ and $g$ be disjoint cofinal subsets of $\tau$. We define disjoint open sets 
$$V_f=\bigcup_{\alpha \in f} V_{\alpha} \quad \text{and} \quad V_g=\bigcup_{\alpha \in g} V_{\alpha}$$ of type at most $\tau$ and claim that both sets limit onto $x$, contradicting the $F_\kappa$-space property of $S_\kappa$. Suppose the claim was false. Then $\closure{V_f}$ is a subset of $A=\bigcup_{\alpha < \tau} A_\alpha$. By compactness, there is a finite set $F \subset \tau$ such that $\closure{V_f} \subset \bigcup_{\beta \in F} A_\beta$. But there are sets $V_\alpha$ with arbitrarily large index contributing to $V_f$, a contradiction.
\end{proof}

\begin{proof}[Proof of Lemma \ref{superlemma}]
Let $A$ be a clopen non-compact subset of $S_\kappa \setminus \singleton{x}$, and denote by $X$ the closure of $A$ in $S_\kappa$, i.e.\ $X=A \cup \singleton{x} \subset S_\kappa$. Then $X$ is a compact zero-dimensional space of weight $\kappa$ without isolated points. We check for the remaining $\kappa$-Parovi\v{c}enko properties.

To show that $X$ has the $F_\kappa$-space property, let $U$ and $V$ be disjoint open sets of $X$ of type less than $\kappa$. By normality, it suffices to show that $U$ and $V$ have disjoint closures in $X$. First suppose that $x$ belongs to $U \cup V$. Assume $x \in U$, so that $x$ does not belong to the closure of $V$. The sets $U\cap A$ and $V\cap A$ are of $A$-type less than $\kappa$. And since $A$ is an $F_\kappa$-space by \cite[14.1]{Ultrafilters}, they have disjoint closures in $A$, and therefore in $X$. Next, suppose that $x$ does not belong to $U \cup V$. Then $U$ and $V$ are subsets of $A$, and consequently of $S_\kappa$-type less than $\kappa$. Thus, $U$ and $V$ have disjoint closures in $S_\kappa$, and hence in $X$. This establishes that $X$ is an $F_\kappa$-space.

To show that $X$ has the $G_\kappa$-space property, suppose that $U=\bigcap_{\alpha < \lambda} U_\alpha$ is a non-empty set for $\lambda < \kappa$ where all $U_\alpha$ are clopen subsets of $X=A \cup \singleton{x}$. If $U$ has empty intersection with $A$, then all $X \setminus U_\alpha$ are clopen subsets of $S_\kappa$. It follows that $A=\bigcup_{\alpha < \lambda} X \setminus U_\alpha$ is a clopen non-compact subspace $S_\kappa \setminus \singleton{x}$ of type less than $\kappa$, contradicting Lemma \ref{nicelemma}. Thus, $U$ intersects $A$, and their intersection has non-empty interior in $S_\kappa$. 
\end{proof}

Following the proof of Corollary \ref{mycor1}, we see that for every point $x$ of $S_\kappa$ there is a clopen copy of $S_\kappa$ contained in $S_\kappa$ such that $x$ is a $P_\kappa$-point with respect to that copy. In particular, $S_\kappa$ contains $P_\kappa$-points.
 
\begin{mycor}
\label{mycor12}
Assume $\kappa = \kappa^{<\kappa}$. For every point $x$ of $S_\kappa$ and for every butterfly around $x$, one of its wings together with $x$ itself is a copy of $S_\kappa$ such that $x$ is a $P_\kappa$-point with respect to that copy. \qed
\end{mycor}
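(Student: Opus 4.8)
The plan is to mimic the proof of Corollary \ref{mycor1}, now using Lemma \ref{superlemma} in place of Lemma \ref{lemma1}. Fix a point $x$ of $S_\kappa$ and a butterfly around $x$, that is, a partition of $S_\kappa \setminus \singleton{x}$ into clopen wings $A$ and $B$ with $\closure{A} \cap \closure{B} = \singleton{x}$. Since both wings are clopen and non-compact in $S_\kappa \setminus \singleton{x}$, Lemma \ref{superlemma} tells us that $A \cup \singleton{x}$ and $B \cup \singleton{x}$ are each homeomorphic to $S_\kappa$. So the remaining content is purely the claim that $x$ is a $P_\kappa$-point with respect to at least one of these copies.

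The key observation is the characterisation of $P_\kappa$-points recorded in fact $(6)$ of Section \ref{section2}: in $S_\kappa$ the point $x$ fails to be a $P_\kappa$-point precisely when it lies in the boundary of some open set of type less than $\kappa$. I would argue by contraposition. Suppose $x$ were a non-$P_\kappa$-point with respect to both copies $A \cup \singleton{x}$ and $B \cup \singleton{x}$. Then there is an open set $U \subset A$ of type less than $\kappa$ with $x \in \closure{U}$, and likewise an open set $V \subset B$ of type less than $\kappa$ with $x \in \closure{V}$. Because $A$ and $B$ are disjoint, $U$ and $V$ are disjoint open sets of type less than $\kappa$ in $S_\kappa$ whose closures both contain $x$. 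This directly contradicts the $F_\kappa$-space property of $S_\kappa$, which forces disjoint open sets of type less than $\kappa$ to have disjoint closures. Hence $x$ must be a $P_\kappa$-point with respect to at least one wing, as desired.

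The one point requiring a little care — and the place I expect the only genuine friction — is the transfer of the type bound between the copy $A \cup \singleton{x}$ and the ambient space $S_\kappa$. The $P_\kappa$-point characterisation is phrased intrinsically inside the relevant copy of $S_\kappa$, so a non-$P_\kappa$-point with respect to $A \cup \singleton{x}$ yields an open set of small $(A \cup \singleton{x})$-type accumulating at $x$. I would note that such a set may be taken inside $A$ (deleting $x$ changes nothing about the closure condition since $x \notin U$), and that its $A$-type equals its $S_\kappa$-type: any representation of $U$ as a union of fewer than $\kappa$ clopen subsets of $A$ is simultaneously a representation by clopen subsets of $S_\kappa$, as $A$ is itself clopen in $S_\kappa$. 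With this identification the $F_\kappa$-argument above applies in $S_\kappa$ verbatim, and the proof is complete.
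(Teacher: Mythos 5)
Your argument is correct and is essentially the paper's own proof: the paper disposes of this corollary by transferring the argument of Corollary \ref{mycor1}, namely that Lemma \ref{superlemma} makes each (necessarily non-compact) wing together with $x$ a copy of $S_\kappa$, and that if $x$ were a non-$P_\kappa$-point with respect to both wings it would lie in the closures of two disjoint open sets of type less than $\kappa$, contradicting the $F_\kappa$-space property of $S_\kappa$. The one inaccuracy is your justification of the type transfer: $A$ is clopen in $S_\kappa \setminus \singleton{x}$ but \emph{not} clopen in $S_\kappa$ (its closure is $A \cup \singleton{x}$); the correct reason is that a clopen subset of the compact copy $A \cup \singleton{x}$ which avoids $x$ is compact and open in $S_\kappa$, hence clopen in $S_\kappa$, so an open $U \subset A$ of $(A \cup \singleton{x})$-type less than $\kappa$ also has $S_\kappa$-type less than $\kappa$.
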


\section{\texorpdfstring{\bf The Stone-\v{C}ech compactification of $\mathbf{S_\kappa \setminus \singleton{x}}$}{The Stone-Cech Compactification}}
\label{section6}

In this section we show that $\beta (S_\kappa \setminus \singleton{x})$ is a $\kappa$-Parovi\v{c}enko space of large weight. The result is best possible in the sense that $\beta (S_\kappa \setminus \singleton{x})$ cannot be a $\kappa^+$-Parovi\v{c}enko space, for points in $S_\kappa \setminus \singleton{x}$ continue to have character $\kappa$.

We first prove a theorem about $F_\kappa$-spaces which generalises the well-known corresponding theorem for $F$-spaces (compare ($1$) and ($1'$) in Section \ref{section2}). It is interesting to note that the $F$-space property is even hereditary with respect to $C^*$-embedded subspaces, but the same is not true for $F_\kappa$-spaces---see the remarks after Lemma \ref{superlemma}.

\begin{mythm}
\label{FKappaSpaces}
A strongly zero-dimensional space is an $F_\kappa$-space if and only if its Stone-\v{C}ech compactification is an $F_\kappa$-space.
\end{mythm}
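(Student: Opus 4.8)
The plan is to prove both implications by relating the $F_\kappa$-space property, which concerns $C^*$-embedding of open sets of type less than $\kappa$, to the geometry of clopen sets under the Stone-\v{C}ech functor. The convenient reformulation to keep in mind throughout is property $(2')$: in a (strongly) zero-dimensional normal space, being an $F_\kappa$-space is equivalent to the statement that any two disjoint open sets of type less than $\kappa$ have disjoint closures. Since compact Hausdorff spaces are normal, $\beta X$ satisfies this characterisation, and the main work is to transport the ``disjoint closures'' condition between $X$ and $\beta X$ in both directions. I would first record that strong zero-dimensionality of $X$ guarantees $\beta X$ is zero-dimensional, so that types and clopen-set decompositions make sense on both sides.

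\medskip
\noindent\textbf{The forward direction ($X \Rightarrow \beta X$).} Assume $X$ is a strongly zero-dimensional $F_\kappa$-space. Let $U$ and $V$ be disjoint open subsets of $\beta X$ of type $\tau < \kappa$; I want $\closure{U} \cap \closure{V} = \emptyset$ (closures taken in $\beta X$). The key step is to trace these sets back to $X$. Writing $U = \Union_{\alpha < \tau} U_\alpha$ and $V = \Union_{\alpha < \tau} V_\alpha$ as unions of clopen subsets of $\beta X$, I set $U' = U \intersect X$ and $V' = V \intersect X$. Because clopen subsets of $\beta X$ restrict to clopen subsets of $X$, both $U'$ and $V'$ are open in $X$ of type at most $\tau < \kappa$, and they are disjoint. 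The $F_\kappa$-property of $X$ gives $\closureIn{U'}{X} \intersect \closureIn{V'}{X} = \emptyset$. The crucial observation is that since $X$ is dense in $\beta X$, each $U_\alpha$ (being clopen in $\beta X$) equals $\closureIn{U_\alpha \intersect X}{\beta X}$, so $\closureIn{U}{\beta X} = \closureIn{U'}{\beta X}$ and likewise for $V$. Thus it suffices to show that disjoint closures in $X$ of the traces force disjoint closures in $\beta X$; here I would invoke that for a pair of open sets of bounded type in a strongly zero-dimensional space, their separation is witnessed by a partition of $X$ into two clopen sets (by the $F_\kappa$-property these types behave like cozero-sets being $C^*$-embedded, hence separated by a clopen partition), and such a partition extends to a clopen partition of $\beta X$ separating $\closureIn{U}{\beta X}$ from $\closureIn{V}{\beta X}$.

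\medskip
\noindent\textbf{The reverse direction ($\beta X \Rightarrow X$).} Assume $\beta X$ is a strongly zero-dimensional $F_\kappa$-space; then $X$ is strongly zero-dimensional by definition. To check $X$ is an $F_\kappa$-space, let $U$ and $V$ be disjoint open subsets of $X$ of type less than $\kappa$, say $U = \Union_{\alpha < \tau} U_\alpha$ with each $U_\alpha$ clopen in $X$, and similarly for $V$. Each $U_\alpha$ has the form $W_\alpha \intersect X$ for a clopen $W_\alpha$ of $\beta X$, and I would replace $\widetilde{U} = \Union_{\alpha<\tau} W_\alpha$ and $\widetilde{V}$ correspondingly; these are open of type less than $\kappa$ in $\beta X$ but need not be disjoint. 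The remedy is to first separate $U$ from $V$ inside $X$ by a clopen partition: this is exactly what an $F_\kappa$-space must deliver and what I am trying to prove, so instead I separate at the level of $\beta X$ after correcting the overlap. Concretely, I would use that disjointness of $U$ and $V$ in $X$ means $\widetilde{U} \intersect \widetilde{V} \intersect X = \emptyset$, i.e.\ $\widetilde{U} \intersect \widetilde{V}$ misses the dense set $X$ and is therefore a nowhere dense open set, hence empty; thus $\widetilde{U}$ and $\widetilde{V}$ are already disjoint open sets of type less than $\kappa$ in $\beta X$. The $F_\kappa$-property of $\beta X$ then yields $\closureIn{\widetilde{U}}{\beta X} \intersect \closureIn{\widetilde{V}}{\beta X} = \emptyset$, and restricting to $X$ gives $\closureIn{U}{X} \intersect \closureIn{V}{X} = \emptyset$, as required.

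\medskip
\noindent\textbf{Main obstacle.} The delicate point is the type-control when passing between $X$ and $\beta X$: I must ensure that restriction and extension of clopen sets neither raises the type past $\kappa$ nor destroys disjointness, and in particular that the trace of a type-$\tau$ open set of $\beta X$ is genuinely of type at most $\tau$ in $X$ (and conversely). The cleanest route is to argue entirely through the ``disjoint closures'' reformulation $(2')$ together with the fact that a clopen partition of $X$ extends uniquely to a clopen partition of $\beta X$ and vice versa, which reduces both implications to checking that disjoint unions of boundedly many clopen sets have disjoint closures. I expect the forward implication to require slightly more care, since there the overlap-removal trick is unavailable and one genuinely needs the $F_\kappa$-separation of $X$ to produce a clopen partition that then extends to $\beta X$.
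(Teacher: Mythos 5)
Your forward direction is essentially sound, and it takes a slightly different route from the paper's: you reduce to the ``disjoint closures'' form of the $F_\kappa$-property for $\beta X$ (legitimate, since $\beta X$ is compact Hausdorff, hence normal, so $(2')$ applies there) and then transport a separation of the traces $U\cap X$ and $V\cap X$ up to $\beta X$. To make the middle step airtight, note that $(U\cap X)\cup(V\cap X)$ is open of type less than $\kappa$ in $X$, hence $\Cstar$-embedded, so the $\{0,1\}$-valued function separating the two traces extends over $X$ and then over $\beta X$; continuity already forces $\closureIn{U}{\beta X}\cap\closureIn{V}{\beta X}=\emptyset$, with no need for a clopen partition. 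The paper instead extends an arbitrary bounded function on $U$ directly; both arguments work for this implication.

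The reverse direction has a genuine gap. Your lifting of the $U_\alpha$ to clopen $W_\alpha\subset\beta X$ and the density argument showing $\widetilde{U}\cap\widetilde{V}=\emptyset$ are correct, so you do prove that disjoint open subsets of $X$ of type less than $\kappa$ have disjoint closures. But you then conclude that $X$ is an $F_\kappa$-space by invoking $(2')$, and that equivalence is stated---and is only true---for \emph{normal} spaces; $X$ is merely assumed strongly zero-dimensional and Tychonoff, and nothing in the hypotheses makes it normal. The $F_\kappa$-property demands that every open set $U$ of type less than $\kappa$ be $\Cstar$-embedded, i.e.\ that every bounded continuous function on $U$ extend, and a separation property for pairs of open sets of small type does not deliver this: the natural Urysohn-type reduction would require the cozero subsets of $U$ witnessing a complete separation to again have small type in $X$, which there is no reason to expect. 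This is exactly where the paper works harder: it inducts on $\lambda\le\kappa$, uses the inductive hypothesis (that $X$ is an $F_\mu$-space for all $\mu<\lambda$) to extend a given $f$ over each partial union $V_\beta=\bigcup_{\alpha<\beta}U_\alpha$, glues the resulting extensions along the clopen sets $\bigcup_{\alpha<\beta}\closureIn{U_\alpha}{\beta X}$ (well-defined because $V_\beta$ is dense there), and only then applies the $F_\kappa$-property of $\beta X$ to extend to all of $\beta X$ and restrict back to $X$. Some such direct construction of the extension is needed; the disjoint-closures reduction alone does not suffice.
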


\begin{proof} Suppose that $X$ is an $F_\kappa$-space, and $U$ an open set of $\beta X$ of type less than $\kappa$. To show that $U$ is $C^*$-embedded in $\beta X$, fix a continuous $[0,1]$-valued function $f$ defined on $U$. The set $U\cap X$ is of type less than $\kappa$ in $X$. By assumption, $f|_{U\cap X}$ can be extended to a continuous $[0,1]$-valued function $F$ on $X$ which again can be extended to a continuous $[0,1]$-valued function $\beta F$ on $\beta X$. Now $\beta F|_U = f$, as both functions agree on the dense subset $U\cap X$.  

For the converse, assume that $\beta X$ is an $F_\kappa$-space. We show by induction on $\lambda$ that $X$ is an $F_\lambda$-space for all $\lambda \leq \kappa$. For $\lambda = \w$ there is nothing to prove. So let $\lambda \leq \kappa$ be uncountable and assume that $X$ is an $F_\mu$-space for all $\mu < \lambda$. Let $U$ be an open set of $X$-type $\tau < \lambda$. We aim to show that $U$ is $C^*$-embedded in $X$. 

There are $X$-clopen sets $U_\alpha$ such that $U=\bigcup_{\alpha < \tau} U_\alpha$. Write $V_\beta = \bigcup_{\alpha<\beta}U_\alpha$ and $W_\beta = \bigcup_{\alpha < \beta} \closure{U}_\alpha$ where the closure is taken in $\beta X$. Note that all $\closure{U}_\alpha$ are clopen subsets of $\beta X$ and that $V_\beta = W_\beta \cap X$. Let $f$ be a continuous $[0,1]$-valued map on $V_\tau=U$. For each $\beta < \tau$, the set $V_\beta$ is $C^*$-embedded in $X$ by induction hypothesis, and hence, $f|_{V_\beta}$ extends to $X$ and then to $\beta X$. Let $f_\beta$ be the restriction of this extension to $W_\beta$.

Since $V_\beta=W_\beta \cap X$ is dense in $W_\beta$ for all $\beta < \tau$, the function $f_\tau=\bigcup_{\beta < \tau} f_\beta$ is well-defined on $W_\tau$. And since every $\closure{U}_a$ is a clopen subset of $\beta X$, is is not hard to check that $f_\tau$ is continuous. Thus, $f_\tau$ extends from $W_\tau$ to $\beta X$ by the $F_\kappa$-space property. The restriction of this extension to $X$ is the required extension of $f$.
\end{proof}

\begin{mythm}
\label{main1}
Assume $\kappa = \kappa^{<\kappa}$. For every point $x$ of $S_\kappa$ the space $\beta (S_\kappa \setminus \singleton{x})$ is a $\kappa$-Parovi\v{c}enko space of weight $2^\kappa$. 
\end{mythm}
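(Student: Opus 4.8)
The plan is to check, for $Y := S_\kappa\setminus\singleton{x}$ and $Z := \beta Y$, the five defining properties of a $\kappa$-Parovi\v{c}enko space together with $w(Z)=2^\kappa$. Three are immediate. As an open subspace of $S_\kappa$, the space $Y$ is locally compact, zero-dimensional and Hausdorff, hence strongly zero-dimensional, so $Z$ is zero-dimensional; note also that $S_\kappa$ is precisely the one-point compactification of $Y$. Since $Y$ is dense in $Z$ and has no isolated points, neither does $Z$. For the weight, restriction $D\mapsto D\intersect Y$ is a Boolean isomorphism $\clopens{Z}\to\clopens{Y}$, so $\cardinality{\clopens{Z}}=2^\kappa$ by Corollary \ref{carclopens2}; as $Z$ is compact and zero-dimensional, $w(Z)=\cardinality{\clopens{Z}}=2^\kappa$.

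Both remaining properties I reduce to $S_\kappa$ through one localisation lemma: \emph{if $U,V$ are disjoint open subsets of $Y$ of type less than $\kappa$, then $\closureIn{U}{S_\kappa}\intersect\closureIn{V}{S_\kappa}\subseteq\singleton{x}$.} Indeed, a common closure point $y\ne x$ has a clopen neighbourhood $G$ in $S_\kappa$ with $x\notin G$; then $G$ is a clopen copy of $S_\kappa$, the traces $U\intersect G$ and $V\intersect G$ are disjoint open sets of type less than $\kappa$ in $G$ with $y$ in both closures, contradicting the $F_\kappa$-property of $G\cong S_\kappa$.

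For the $F_\kappa$-property of $Z$: since $Z$ is compact, hence normal, it suffices by $(2')$ to show that disjoint open sets of type less than $\kappa$ have disjoint closures, and passing to traces on the dense set $Y$ this follows once I separate any two disjoint open sets $P,Q$ of type less than $\kappa$ in $Y$ by a set clopen in $Y$. By the localisation lemma their $S_\kappa$-closures meet only possibly in $x$. If $x$ lies in at most one of them the closures are disjoint compacta, separated by a clopen set of $S_\kappa$ trimmed to avoid $x$. If $x$ lies in both, I mimic the transfinite construction of the Butterfly Lemma \ref{butterfly2}: fixing a clopen neighbourhood base $\set{W_\gamma}:{\gamma<\kappa}$ of $x$ and sorting $P$ and $Q$ into two growing disjoint clopen families outside the compacta $S_\kappa\setminus W_\gamma$, the localisation lemma guarantees that the relevant closures stay disjoint at each stage, so the $F_\kappa$-property of $S_\kappa$ provides the clopen partitions of $S_\kappa$ needed to continue; their union is the required separator.

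It remains to prove that $Z$ is a $G_\kappa$-space, and this is where I expect the real difficulty. Because $\clopens{Z}\cong\clopens{Y}$ and $Y$ is itself a $G_\kappa$-space (an open subspace of the $G_\kappa$-space $S_\kappa$), a nonempty intersection $W=\intersect_{\alpha<\lambda}D_\alpha$ of fewer than $\kappa$ clopen sets of $Z$ automatically has nonempty interior once $W$ meets $Y$; writing $C_\alpha=D_\alpha\intersect Y$, this happens exactly when $\intersect_{\alpha<\lambda}C_\alpha\ne\emptyset$ in $Y$. The entire obstacle is therefore to show that no family of fewer than $\kappa$ clopen subsets of $Y$ with the finite intersection property escapes into the remainder $Y^*$, i.e.\ has empty intersection in $Y$. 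I plan to argue by contradiction: such a family may be thinned to a strictly decreasing tower of non-compact clopen sets $\set{C_\nu}:{\nu<\theta}$ for some regular $\theta<\kappa$ with $\intersect_\nu C_\nu=\emptyset$, whence $\intersect_\nu\closureIn{C_\nu}{S_\kappa}=\singleton{x}$; the points of this tower then accumulate only at $x$ and hence produce a nontrivial sequence converging to $x$. For $\theta=\omega$ this contradicts the absence of nontrivial convergent sequences in the compact $F$-space $S_\kappa$. For uncountable $\theta$ one cannot extract a convergent $\omega$-sequence and neighbourhood bases of $x$ need not be nested; here I expect to exploit instead that every point of $S_\kappa$ has pseudocharacter $\kappa$ (forced by the $G_\kappa$-property together with the absence of isolated points), extracting from the escaping tower a family of fewer than $\kappa$ clopen sets whose intersection, inside a clopen copy of $S_\kappa$, is $\singleton{x}$ --- the desired contradiction. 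Handling this longer-tower case is the crux of the proof.
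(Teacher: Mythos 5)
Your treatment of zero-dimensionality, isolated points, the weight, and the $F_\kappa$-property is sound (for the $F_\kappa$-part you essentially reprove Lemma \ref{moretechnicalbits} and feed it into $(2')$, where the paper instead quotes that open subspaces of $S_\kappa$ are strongly zero-dimensional $F_\kappa$-spaces and applies Theorem \ref{FKappaSpaces}; both routes work). The genuine gap is exactly where you flag it: the $G_\kappa$-property for towers of uncountable length. Your reduction is correct --- everything comes down to showing that no family of fewer than $\kappa$ clopen subsets of $Y=S_\kappa\setminus\singleton{x}$ with the finite intersection property has empty intersection in $Y$ --- but your plan for the case of a regular uncountable $\theta<\kappa$ does not go through. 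If $\Set{C_\nu : \nu<\theta}$ is such a tower, the sets $C_\nu\cup\singleton{x}$ do intersect to $\singleton{x}$, but they are closed and \emph{not} open, and their complements $Y\setminus C_\nu$ are (when non-compact) of $S_\kappa$-type exactly $\kappa$ by Lemma \ref{nicelemma}; so the tower does not yield fewer than $\kappa$ open (or clopen) sets of $S_\kappa$ meeting in $\singleton{x}$, and there is nothing to set against the pseudocharacter of $x$. (A smaller issue: intersections of fewer than $\kappa$ clopen subsets of $Y$ need not be clopen, so the ``thinning to a clopen tower'' also needs an argument.)

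The device that closes this case in the paper is not pseudocharacter but the $F_\kappa$-property, via the same ``disjoint cofinal subsets'' trick used in Lemma \ref{nicelemma}. Writing $V_\beta=\bigcap_{\alpha<\beta}U_\alpha\cap X$ with $\bigcap_{\beta<\lambda}V_\beta=\emptyset$ and $V_\beta\setminus V_{\beta+1}\neq\emptyset$, one chooses a \emph{compact clopen} $W_\beta\subset V_\beta\setminus V_{\beta+1}$ (possible since this difference is a non-empty intersection of fewer than $\kappa$ open sets of $S_\kappa$, hence has interior), splits $\lambda$ into disjoint cofinal sets $f$ and $g$, and observes that $W_f=\bigcup_{\alpha\in f}W_\alpha$ and $W_g=\bigcup_{\alpha\in g}W_\alpha$ are disjoint open sets of type at most $\lambda<\kappa$ which must \emph{both} limit onto $x$: if, say, $\closure{W_f}$ missed $x$ it would be a compact subset of $X=\bigcup_{\beta<\lambda}(X\setminus V_\beta)$, yet this cover admits no finite subcover by cofinality of $f$. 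Two disjoint open sets of small type with a common closure point contradict the $F_\kappa$-property of $S_\kappa$. Replacing the chosen points of your tower by these compact clopen selectors --- so that the cofinal pieces become open sets of small type rather than mere point sets --- is the missing idea; with it, the countable and uncountable cases are handled uniformly and your reliance on convergent sequences for $\theta=\omega$ also becomes unnecessary.
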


\begin{proof}
Let $X=S_\kappa \setminus \singleton{x}$. First we verify the $\kappa$-Parovi\v{c}enko properties. Clearly, $\beta X$ does not contain isolated points. By  \cite[14.1]{Ultrafilters}, every open subspace of $S_\kappa$ is a strongly zero-dimensional $F_\kappa$-space and hence so is $\beta X$ by Theorem \ref{FKappaSpaces}.

For the $G_\kappa$-space property, let $U=\bigcap_{\alpha < \lambda} U_\alpha$ be a non-empty intersection of clopen sets in $\beta X$ for some $\lambda < \kappa$. To prove that $U$ has non-empty interior it suffices to show that it intersects $X$.

Assume for a contradiction that $\lambda$ is minimal such that $U$ has empty intersection with $X$. Consider the sets $V_\beta = \bigcap_{\alpha < \beta} U_\alpha \cap X$. Then $\bigcap_{\beta < \lambda} V_\beta$ is empty, whereas, without loss of generality, $V_\beta \setminus V_{\beta +1}$ is non-empty for all $\beta < \lambda$. This last set is a non-empty intersection of less than $\kappa$-many open sets in $S_\kappa$ and therefore contains a compact clopen set $W_\beta$.

Let $f$ and $g$ be disjoint cofinal subsets of $\lambda$. We define disjoint open sets
$$W_f=\bigcup_{\alpha \in f} W_{\alpha} \quad \text{and} \quad W_g=\bigcup_{\alpha \in g} W_{\alpha}$$ of type at most $\lambda<\kappa$ and claim that in $S_\kappa$ both sets limit onto $x$, contradicting the $F_\kappa$-space property. Suppose the claim was false, e.g.\ that $W_f$ does not limit onto $x$. Then the closure of $W_f$ in $S_\kappa$, a compact set, would be contained in $X=\bigcup_{\beta < \lambda} X \setminus V_\beta$. But by construction of $W_f$, this open cover of $\closure{W_f}$ does not have a finite subcover, giving the desired contradiction.

It remains to calculate the weight of $\beta (S_\kappa \setminus \singleton{x})$. By Theorem \cite[2.21]{Ultrafilters} and Lemmas \cite[2.23(b) \& 2.24]{Ultrafilters}, the weight of $\beta X$ for a strongly zero-dimensional space $X$ is equal to the cardinality of $\clopens{X}$. Now apply Corollary \ref{carclopens2}.
\end{proof}

It is an interesting question whether it is a ZFC theorem that the Stone-\v{C}ech compactification of $\wstar \setminus \singleton{x}$ is a Parovi\v{c}enko space. A.\,Dow showed that the assertion that all open subspaces of $\wstar$ are strongly zero-dimensional $F$-spaces is equivalent to CH \cite{Dow2}. However, we are only interested in open subspaces of the form $\wstar \setminus \singleton{x}$. And these may be strongly zero-dimensional $F$-spaces even under the negation of CH, for example when $\wstar \setminus \singleton{x}$ is $\Cstar$-embedded in $\wstar$.

The result about the weight also follows from the fact that the remainder of $S_\kappa \setminus \singleton{x}$ has weight $2^\kappa$, see Theorem \ref{cellularity}.

In the case of $\kappa=\w_1$, a tempting proof of Theorem \ref{main1} can be obtained by observing that $\wstar \setminus \singleton{x}$ is pseudocompact (as it cannot contain a closed copy of $\w$), implying that every $G_\delta$-set of $\beta (\wstar \setminus \singleton{x})$ intersects $\wstar \setminus \singleton{x}$. But this approach does not seem to generalise to $S_\kappa \setminus \singleton{x}$ without  extra effort. At the same time, both approaches are somehow intertwined: the proof of Theorem \ref{main1} shows that $S_\kappa \setminus \singleton{x}$ is $\alpha$-pseudocompact for all $\alpha < \kappa$ \cite[2.2]{alphapseudo}. For more on $\alpha$-pseudocompactness see \cite{alphapseudo}.


\section{\texorpdfstring{\bf The structure of $\mathbf{(S_\kappa \setminus \singleton{x})^*}$}{The Structure of Remainders}}
\label{section7}

The previous section has shown that $\beta (S_\kappa \setminus \singleton{x})$ is a $\kappa$-Parovi\v{c}enko space of quite large weight. Thus, these spaces are too large for the $\kappa$-Parovi\v{c}enko properties to provide meaningful topological restrictions on the variety of potential spaces of that size. 

For a better understanding we therefore turn to an investigation of the remainders of these spaces. The main result of this section is that every space of the form $(S_\kappa \setminus \singleton{x})^*$ is a $\kappa^+$-Parovi\v{c}enko space of weight $2^\kappa$, regardless of the choice of $x$. It follows that under $2^\kappa=\kappa^+$, all such remainders are homeomorphic to $S_{\kappa^+}$. In particular, it is consistent with CH that all remainders of spaces of the form $\wstar \setminus \singleton{x}$ are homeomorphic. 

Note that by Theorem \ref{main1}, $(S_\kappa \setminus \singleton{x})^*$ is a compact zero-dimensional space. The next lemma is the first step for establishing the remaining Parovi\v{c}enko properties.

\begin{mylem}
\label{atomless2}
Assume $\kappa=\kappa^{<\kappa}$. For every $x \in S_\kappa$ the space $(S_\kappa \setminus \singleton{x})^*$ has no isolated points.
\end{mylem}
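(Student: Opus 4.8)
The plan is to argue by contradiction and to convert an isolated point of the remainder into a clopen subset of $S_\kappa \setminus \singleton{x}$ whose Stone-\v{C}ech compactification adds exactly one point; the two Lemmas of Section \ref{section4} then collide. Write $Y = S_\kappa \setminus \singleton{x}$ and suppose $p \in Y^*$ were isolated in $Y^*$, so that $\singleton{p} = O \cap Y^*$ for some open $O \subset \beta Y$. By Theorem \ref{main1} the space $\beta Y$ is compact, zero-dimensional and without isolated points; in particular I can shrink $O$ to a clopen set $W$ of $\beta Y$ with $p \in W$ and $W \cap Y^* = \singleton{p}$. Put $V = W \cap Y$. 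Since $W$ is clopen in $\beta Y$, the set $V$ is clopen in $Y$, it is dense in $W$, and $W = V \union \singleton{p}$.

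The key structural observation is that $W = \beta V$ with a one-point remainder. Indeed, $V$ is $\Cstar$-embedded in $W$: any bounded continuous $f$ on $V$ extends to $Y$ by setting it $0$ on the clopen complement $Y \setminus V$, this extension extends over $\beta Y$ since $Y$ is $\Cstar$-embedded in $\beta Y$, and restricting to $W$ recovers an extension of $f$. As $V$ is dense in the compact set $W$, this gives $W = \beta V$, so $\beta V \setminus V = \singleton{p}$. Consequently $V$ cannot be compact, for otherwise $W = \closure{V}^{\beta Y} = V \subset Y$ would leave no room for $p \in Y^*$. Thus $V$ is a clopen non-compact subset of $S_\kappa \setminus \singleton{x}$, and its Stone-\v{C}ech compactification is a one-point extension of $V$. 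Since the canonical map from $\beta V$ onto the one-point compactification $V^+$ fixes $V$ and both remainders are singletons, this map is a bijection between compact Hausdorff spaces, hence a homeomorphism; so $\beta V \cong V^+$. By Lemma \ref{superlemma} the one-point compactification $V^+$ is homeomorphic to $S_\kappa$, whence $\beta V \cong S_\kappa$.

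This is the desired contradiction. Under the homeomorphism $\beta V \cong S_\kappa$ the point $p$ corresponds to some $q \in S_\kappa$, and $V$ corresponds to $S_\kappa \setminus \singleton{q}$. But $V$ is, by definition, $\Cstar$-embedded in $\beta V$, so $S_\kappa \setminus \singleton{q}$ would be $\Cstar$-embedded in $S_\kappa$, contradicting Corollary \ref{nonStone2}. Hence no point of $Y^*$ is isolated. I expect the only delicate point to be the verification that the clopen set $W$ is the Stone-\v{C}ech compactification of its trace $V = W \cap Y$ (equivalently, that clopen subsets of $Y$ are $\Cstar$-embedded); once that is in place, the heavy lifting is entirely done by Lemma \ref{superlemma} and Corollary \ref{nonStone2}, which dovetail exactly to forbid a single-point Stone-\v{C}ech remainder.
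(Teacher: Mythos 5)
Your proposal is correct and follows essentially the same route as the paper: both arguments extract from an isolated point of the remainder a clopen non-compact subset of $S_\kappa \setminus \singleton{x}$ whose Stone-\v{C}ech compactification is its one-point compactification, then apply Lemma \ref{superlemma} to identify that compactification with $S_\kappa$ and Corollary \ref{nonStone2} to rule out a one-point remainder. The paper states these reductions more tersely, while you spell out the $\Cstar$-embedding of $V=W\cap Y$ in $W$ and the identification $\beta V \cong V^+$, which is exactly the ``delicate point'' the paper leaves implicit.
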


\begin{proof}
Suppose that $z \in (S_\kappa \setminus \singleton{x})^*$ is isolated. Then there is a clopen non-compact subset $A$ of $S_\kappa \setminus \singleton{x}$ such that $A \cup \singleton{z}$ is compact and $A$ is $C^*$-embedded in $A \cup \singleton{z}$. By Lemma \ref{superlemma}, the set $A$ is homeomorphic to $S_\kappa \setminus \singleton{y}$ for some $y \in S_\kappa$. However, this space does not have a one-point Stone-\v{C}ech compactification by Corollary \ref{nonStone2}, a contradiction.
\end{proof}

Our next observation is that for a $P_\kappa$-point $p$, the space $S_\kappa \setminus \singleton{p}$ can be written as an \emph{increasing} union of $\kappa$-many compact clopen sets, each homeomorphic to $S_\kappa$. And remainders of increasing unions of $\kappa$-Parovi\v{c}enko spaces are well understood: the next theorem follows from a result by A.\,Dow from 1985 \cite[2.2]{Dow}.

\begin{mythm}[Dow]
\label{Dowsresult}
Assume $\kappa = \kappa^{<\kappa}$. For a $P_\kappa$-point $p$ of $S_\kappa$ the space $(S_\kappa \setminus \singleton{p})^*$ is a $\kappa^+$-Parovi\v{c}enko space. \qed
\end{mythm}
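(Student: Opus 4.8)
The plan is to present $S_\kappa \setminus \singleton{p}$ as an increasing $\kappa$-chain of clopen copies of $S_\kappa$ and then to appeal to Dow's theorem \cite[2.2]{Dow} on the remainders of such chains, which does the structural heavy lifting. First I would exploit the characterisation of $P_\kappa$-points recorded in fact $(6)$ of Section \ref{section2}: because $p$ is a $P_\kappa$-point it possesses a nested clopen neighbourhood base, which---since $\kappa = \kappa^{<\kappa}$ forces $\kappa$ to be regular---may be written as a $\subseteq$-decreasing chain $\set{U_\alpha}:{\alpha < \kappa}$ with $\bigcap_{\alpha < \kappa} U_\alpha = \singleton{p}$. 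Putting $K_\alpha = S_\kappa \setminus U_\alpha$ then yields an increasing chain of compact clopen subsets of $S_\kappa \setminus \singleton{p}$ with $\bigcup_{\alpha < \kappa} K_\alpha = S_\kappa \setminus \singleton{p}$, and with $K_\alpha$ clopen in $K_\beta$ whenever $\alpha \le \beta$.

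Next I would verify that each $K_\alpha$ is homeomorphic to $S_\kappa$. As a clopen subspace of the compact $\kappa$-Parovi\v{c}enko space $S_\kappa$, each $K_\alpha$ is compact, zero-dimensional and without isolated points, and it inherits both the $F_\kappa$- and the $G_\kappa$-space properties: a clopen set of $K_\alpha$ is clopen in $S_\kappa$, so disjoint open sets of type $< \kappa$ in $K_\alpha$ (respectively non-empty intersections of fewer than $\kappa$-many clopen sets) are of the same kind in $S_\kappa$, and since $K_\alpha$ is itself closed and open these closures and interiors are computed identically in $K_\alpha$ and in $S_\kappa$. Being infinite, $K_\alpha$ has weight at least $\kappa^{<\kappa} = \kappa$ and at most $\kappa$ (the weight of $S_\kappa$), hence weight exactly $\kappa$; the uniqueness theorem of Negrepontis and Dow then gives $K_\alpha \cong S_\kappa$.

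Finally I would feed the chain $\set{K_\alpha}:{\alpha < \kappa}$ into Dow's result \cite[2.2]{Dow}, which computes the Stone-\v{C}ech remainder of a locally compact space presented as an increasing $\kappa$-union of compact clopen $\kappa$-Parovi\v{c}enko subspaces of weight $\kappa$ to be a $\kappa^+$-Parovi\v{c}enko space; this at once identifies $(S_\kappa \setminus \singleton{p})^*$ as $\kappa^+$-Parovi\v{c}enko. Since the passage from the chain to its remainder is carried entirely by the cited theorem, the only real work left to me is the reduction, and the main obstacle is ensuring that the decomposition meets Dow's hypotheses exactly---that the index has the correct length and regular cofinality $\kappa$, that each link $K_\alpha$ is a genuine copy of $S_\kappa$, and that the chain is clopen-increasing so that no additional $\Cstar$-embedding subtleties arise. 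It is precisely the restriction to $P_\kappa$-points that makes this clean decomposition available: a general point of $S_\kappa$ need not admit a nested neighbourhood base by fact $(6)$, which is exactly why the wider result of the paper requires a genuinely different argument.
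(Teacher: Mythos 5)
Your proposal is correct and follows exactly the route the paper takes: the paper likewise observes that a $P_\kappa$-point's nested clopen neighbourhood base exhibits $S_\kappa \setminus \singleton{p}$ as an increasing union of $\kappa$-many compact clopen copies of $S_\kappa$ and then cites Dow \cite[2.2]{Dow}, which is why the theorem is stated with no further proof. Your additional verification that each $K_\alpha$ is a copy of $S_\kappa$ (via the clopen-hereditary Parovi\v{c}enko properties and the weight computation) is a correct filling-in of the detail the paper leaves implicit.
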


Our key result is the following strengthening of Theorem \ref{Dowsresult}. 

\begin{mythm}
\label{hui}
Assume $\kappa = \kappa^{<\kappa}$. For every point $x$ of $S_\kappa$ the space $(S_\kappa \setminus \singleton{x})^*$ is a $\kappa^+$-Parovi\v{c}enko space of weight $2^\kappa$.
\end{mythm}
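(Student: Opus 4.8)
\section*{Proof proposal}

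The plan is to verify the two missing $\kappa^+$-Parovi\v{c}enko properties of $Y^{*}:=(S_\kappa \setminus \singleton{x})^{*}$ directly, for an \emph{arbitrary} point $x$. By Theorem \ref{main1} the space $\beta(S_\kappa\setminus\singleton{x})$ is compact and zero-dimensional, so its closed subspace $Y^{*}$ is compact and zero-dimensional, and it has no isolated points by Lemma \ref{atomless2}. It therefore remains to establish the $F_{\kappa^+}$- and $G_{\kappa^+}$-space properties and to compute the weight. I note at the outset that the naive reduction via the two wings is circular: writing $S_\kappa\setminus\singleton{x}=A\oplus B$ for a butterfly gives $Y^{*}=A^{*}\oplus B^{*}$, and by Corollary \ref{mycor12} one wing carries a $P_\kappa$-structure so its remainder is handled by Theorem \ref{Dowsresult}; but if \emph{both} wings did, then $x$ would be a $P_\kappa$-point of $S_\kappa$, so the second wing is genuinely the general case. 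A direct argument is needed.

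First I would set up a combinatorial translation. Writing $\mathcal K$ for the ideal of compact clopen subsets of $Y:=S_\kappa\setminus\singleton{x}$, the clopen algebra of $Y^{*}$ is the quotient $\clopens{Y}/\mathcal K$, the algebra of germs of clopen sets at $x$: a clopen subset of $Y^{*}$ is $[W]$ for a non-compact clopen $W\subseteq Y$, with $[W]=0$ iff $W$ is compact. Under this dictionary the two properties become statements about $\le\kappa$-sized families of non-compact clopen subsets of $Y$. The $G_{\kappa^+}$-property is the \emph{pseudo-intersection} statement: if $\Set{W_\beta:\beta<\kappa}$ has all finite intersections non-compact, then there is a non-compact clopen $D\subseteq Y$ with $D\setminus W_\beta$ compact for every $\beta$. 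The $F_{\kappa^+}$-property, via normality of the compact space $Y^{*}$ and fact $(2')$ of Section \ref{section2}, is the \emph{simultaneous separation} statement: if $\Set{U_\beta:\beta<\kappa}$ and $\Set{V_\gamma:\gamma<\kappa}$ satisfy that every $U_\beta\intersect V_\gamma$ is compact, then there is a clopen $E\subseteq Y$ with $U_\beta\setminus E$ and $V_\gamma\intersect E$ compact for all $\beta,\gamma$. The single-set case of this last statement is exactly the Butterfly Lemma \ref{butterfly2}; the task here is to run the separation for $\kappa$-many sets on each side at once.

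Both statements I would prove by a transfinite recursion of length $\kappa$ modelled on the proof of the Butterfly Lemma \ref{butterfly2}, using that $\kappa=\kappa^{<\kappa}$ permits the bookkeeping and that $S_\kappa$ is an $F_\kappa$- and $G_\kappa$-space. For the separation statement, at stage $\alpha<\kappa$ I would have accumulated two disjoint clopen sets capturing the $U_\beta$ and the $V_\gamma$ seen so far; the $G_\kappa$-property supplies fresh clopen ``room'' near $x$ in which to place $U_\alpha$ and $V_\alpha$ so that both sides keep limiting onto $x$, while the $F_\kappa$-property keeps the accumulated sides apart, exactly as the clopen partitions $C,D$ are produced in Lemma \ref{butterfly}. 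The pseudo-intersection statement is handled by the dual recursion, building $D$ as a ``generalized wing'' whose germ at $x$ lies below every $[W_\beta]$; here one repeatedly uses that a non-compact clopen set together with $x$ is a copy of $S_\kappa$ (Lemma \ref{superlemma}) and has $S_\kappa$-type $\kappa$ (Lemma \ref{nicelemma}), which prevents the relevant intersections from degenerating.

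The step I expect to be the main obstacle is the limit stages for a non-$P_\kappa$-point $x$. When $x$ is a $P_\kappa$-point the construction collapses to Dow's situation (Theorem \ref{Dowsresult}), where $D$ may be taken as an increasing union of $\kappa$ compact copies of $S_\kappa$. For a general $x$ there is no descending neighbourhood base, so the witnesses cannot be built as increasing unions with compact initial segments; the recursion must instead thread through $x$ using only the $G_\kappa$- and $F_\kappa$-properties to produce, at each limit, a new clopen approximation that still accumulates at $x$. This is precisely the point at which the argument strengthens Theorem \ref{Dowsresult} and removes the $P_\kappa$-point hypothesis. Finally, for the weight I would use $w(Y^{*})\le w(\beta(S_\kappa\setminus\singleton{x}))=2^{\kappa}$ from Theorem \ref{main1}, and obtain the reverse inequality either by producing $2^{\kappa}$ distinct germs from the $2^{\kappa}$ clopen sets of Corollary \ref{carclopens2} together with the separation statement, or by appealing to Theorem \ref{cellularity}.
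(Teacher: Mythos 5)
There is a genuine gap: your proposal correctly identifies the combinatorial content of the two properties and correctly locates the difficulty (limit stages of the recursion when $x$ is not a $P_\kappa$-point), but it does not contain an idea that overcomes that difficulty --- it only asserts that ``the recursion must thread through $x$ using only the $G_\kappa$- and $F_\kappa$-properties.'' For the pseudo-intersection statement this is precisely where the plan breaks down: at a limit stage $\lambda<\kappa$ you hold a mod-compact decreasing family of $\lambda$-many non-compact clopen subsets of $S_\kappa\setminus\singleton{x}$ and must find a non-compact clopen set almost inside all of them, which is an instance of the very statement being proved. The $G_\kappa$-property of $S_\kappa$ does not supply the needed ``room'': by Lemma \ref{nicelemma} each non-compact clopen subset of $S_\kappa\setminus\singleton{x}$ has $S_\kappa$-type $\kappa$, so the relevant intersections are not intersections of fewer than $\kappa$ clopen subsets of $S_\kappa$, and the $F_\kappa$-property likewise only separates sets of small $S_\kappa$-type. (Compare: $\wstar$ itself is an $F_{\w_1}$- and $G_{\w_1}$-space, yet $\w_1$-towers in $\clopens{\wstar}$ need not have pseudo-intersections; some additional structure is indispensable.)

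The paper resolves this not by a direct recursion but by a localization that refines the wing decomposition you dismissed as circular. Fixing an open $V$ of type $<\kappa$ with $x\in\bd{V}$, the $F_\kappa$-property forces $V$ to accumulate at a single point $\star$ of the remainder $X^*$; one then shows (Claim 1) that \emph{every} clopen subset of $X^*$ avoiding $\star$ equals $D^*$ for a clopen $D\subset X$ that is a wing of a butterfly whose other wing contains $V$, whence $D\cong S_\kappa\setminus\singleton{p}$ for a $P_\kappa$-point $p$ by Corollary \ref{mycor12}, and Dow's Theorem \ref{Dowsresult} applies to $D^*$. Separately, a butterfly recursion (feasible because each $D_\alpha$ is a $P_\kappa$-wing, so $x$ has a nested base relative to $D_\alpha\cup\singleton{x}$, together with Lemma \ref{moretechnicalbits}) shows $\star$ is a $P_{\kappa^+}$-point of $X^*$. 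Any failure of the $F_{\kappa^+}$- or $G_{\kappa^+}$-property would be witnessed at a point, which by the first part must be $\star$, contradicting the second. Your weight computation is essentially the paper's and is fine, but without something like the $\star$-localization the core of the argument is missing.
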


We remark that our proof of Theorem \ref{hui} makes essential use of Dow's original theorem, and does not handle all cases simultaneously. Before presenting the proof, we discuss some interesting corollaries.

\begin{mycor}
Assume $\kappa = \kappa^{<\kappa}$. The Stone-\v{C}ech compactifications $\beta(S_\kappa \setminus \singleton{x})$ and $\beta(S_\kappa \setminus \singleton{y})$ are homeomorphic if and only if $S_\kappa \setminus \singleton{x}$ and $S_\kappa \setminus \singleton{y}$ are homeomorphic.
\end{mycor}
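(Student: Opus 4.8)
The forward implication is immediate: a homeomorphism between $S_\kappa \setminus \singleton{x}$ and $S_\kappa \setminus \singleton{y}$ extends, by the universal property of the Stone-\v{C}ech compactification, to a homeomorphism of the compactifications. The content is therefore the converse, and the plan is to recover the subspace $S_\kappa \setminus \singleton{x}$ from $\beta(S_\kappa \setminus \singleton{x})$ by an internal, homeomorphism-invariant condition, namely the character $\chi$ of a point. Once this is done, any homeomorphism of compactifications must carry $S_\kappa \setminus \singleton{x}$ onto $S_\kappa \setminus \singleton{y}$ and restrict to the required homeomorphism.

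First I would pin down the character of the points coming from $S_\kappa \setminus \singleton{x}$. Every point $z$ of $S_\kappa$ has character exactly $\kappa$: the bound $\chi(z) \le \kappa$ holds because $S_\kappa$ has weight $\kappa$, while $\chi(z) \ge \kappa$ follows from the $G_\kappa$-space property, since a neighbourhood base of size $<\kappa$ would exhibit $\singleton{z}$ as a non-empty intersection of fewer than $\kappa$ clopen sets, hence a set with non-empty interior, and so an isolated point, which $S_\kappa$ does not have. As $S_\kappa \setminus \singleton{x}$ is open in $S_\kappa$, the character of each such $z$ is unchanged in $S_\kappa \setminus \singleton{x}$; and as $S_\kappa \setminus \singleton{x}$ is locally compact, it is open in $\beta(S_\kappa \setminus \singleton{x})$, so the character is again unchanged there. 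Thus every point of $S_\kappa \setminus \singleton{x}$ has character $\kappa$ in $\beta(S_\kappa \setminus \singleton{x})$.

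Next I would show that every point $z$ of the remainder $(S_\kappa \setminus \singleton{x})^*$ has character at least $\kappa^+$ in $\beta(S_\kappa \setminus \singleton{x})$. By Theorem \ref{hui} the remainder is a $\kappa^+$-Parovi\v{c}enko space, in particular a $G_{\kappa^+}$-space, and by Lemma \ref{atomless2} it has no isolated points; exactly as in the previous paragraph this forces $\chi(z) \ge \kappa^+$ computed in the subspace $(S_\kappa \setminus \singleton{x})^*$. Since the character of a point in a subspace is a lower bound for its character in the ambient space, $\chi(z) \ge \kappa^+$ in $\beta(S_\kappa \setminus \singleton{x})$ as well. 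Combining the two paragraphs, $S_\kappa \setminus \singleton{x}$ is precisely the set of points of $\beta(S_\kappa \setminus \singleton{x})$ of character $\le \kappa$. This set is preserved by any homeomorphism, so a homeomorphism $\beta(S_\kappa \setminus \singleton{x}) \to \beta(S_\kappa \setminus \singleton{y})$ maps $S_\kappa \setminus \singleton{x}$ bijectively onto $S_\kappa \setminus \singleton{y}$ and restricts to a homeomorphism between them.

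The only delicate point, and the one I would be most careful about, is the bookkeeping of character as one passes between $S_\kappa$, the open subspace $S_\kappa \setminus \singleton{x}$, the compactification $\beta(S_\kappa \setminus \singleton{x})$, and the closed remainder $(S_\kappa \setminus \singleton{x})^*$. Local compactness of $S_\kappa \setminus \singleton{x}$, which makes it an open subset of its own compactification, is what keeps the character of its points from rising to $\kappa^+$, and this open/closed dichotomy is exactly what separates the two kinds of points; without it the clean characterisation by character would break down.
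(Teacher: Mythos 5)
Your proof is correct and takes essentially the same route as the paper's: the paper's one-sentence argument likewise invokes Theorem \ref{hui} to distinguish ground-space points (character $\kappa$) from remainder points (character at least $\kappa^+$) and restricts the homeomorphism accordingly. You have merely made explicit the character computations and the open/closed bookkeeping that the paper leaves implicit.
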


\begin{proof}
By Theorem \ref{hui}, points in the ground space can be distinguished from points in the remainder of $\beta(S_\kappa \setminus \singleton{x})$ by their character. Thus, any homeomorphism between $\beta(S_\kappa \setminus \singleton{x})$ and $\beta(S_\kappa \setminus \singleton{y})$ restricts to a homeomorphism between $S_\kappa \setminus \singleton{x}$ and $S_\kappa \setminus \singleton{y}$.
\end{proof}

\begin{mycor}
\label{result}
Assume $\kappa = \kappa^{<\kappa}$ and $2^\kappa=\kappa^+$. For every point $x$ the space $(S_\kappa \setminus \singleton{x})^*$ is homeomorphic to $S_{\kappa^+}$.
\end{mycor}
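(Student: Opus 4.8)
The plan is to read off the conclusion by combining Theorem \ref{hui} with the uniqueness characterisation of $\kappa^+$-Parovi\v{c}enko spaces (Negrepontis, Dow). The whole corollary rests on a single observation: the two set-theoretic hypotheses together force $(\kappa^+)^{<\kappa^+} = \kappa^+$, which is precisely the equality that simultaneously guarantees the existence of $S_{\kappa^+}$ and the topological uniqueness of $\kappa^+$-Parovi\v{c}enko spaces of the minimal weight $\kappa^+$. So the work is to verify this cardinal identity and then to check that $(S_\kappa \setminus \singleton{x})^*$ lands in exactly the right class.

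First I would dispatch the cardinal arithmetic. Since $\kappa^+$ is a successor cardinal, the exponents $\mu < \kappa^+$ are exactly those with $\mu \leq \kappa$, so $(\kappa^+)^{<\kappa^+} = (\kappa^+)^\kappa$. Using the hypothesis $2^\kappa = \kappa^+$ I bound $(\kappa^+)^\kappa \leq (2^\kappa)^\kappa = 2^{\kappa \cdot \kappa} = 2^\kappa = \kappa^+$, while $(\kappa^+)^\kappa \geq \kappa^+$ is immediate. Hence $(\kappa^+)^{<\kappa^+} = \kappa^+$; that is, $\lambda = \kappa^+$ satisfies $\lambda = \lambda^{<\lambda}$. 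In particular the unique $\kappa^+$-Parovi\v{c}enko space $S_{\kappa^+}$ of weight $\kappa^+$ exists.

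Next I would invoke Theorem \ref{hui}, which applies because $\kappa = \kappa^{<\kappa}$ is assumed: the remainder $(S_\kappa \setminus \singleton{x})^*$ is a $\kappa^+$-Parovi\v{c}enko space of weight $2^\kappa$. Substituting the second hypothesis $2^\kappa = \kappa^+$ turns this into a $\kappa^+$-Parovi\v{c}enko space of weight exactly $\kappa^+$. Finally, since $(\kappa^+)^{<\kappa^+} = \kappa^+$, the characterisation theorem of Negrepontis and Dow guarantees that all $\kappa^+$-Parovi\v{c}enko spaces of weight $\kappa^+$ are homeomorphic; applying this to $(S_\kappa \setminus \singleton{x})^*$ and $S_{\kappa^+}$ yields the homeomorphism. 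As Theorem \ref{hui} makes no assumption on $x$, the conclusion holds for every point $x$. I do not expect a real obstacle: the corollary is a direct repackaging of Theorem \ref{hui} together with the uniqueness theorem, and the only point requiring care is the short computation confirming that the minimal-weight hypothesis $(\kappa^+)^{<\kappa^+} = \kappa^+$ of the uniqueness theorem is indeed met under $2^\kappa = \kappa^+$.
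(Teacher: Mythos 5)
Your proposal is correct and follows exactly the paper's own route: apply Theorem \ref{hui} to see that the remainder is a $\kappa^+$-Parovi\v{c}enko space of weight $2^\kappa = \kappa^+$, check that $2^\kappa = \kappa^+$ forces $(\kappa^+)^{<\kappa^+} = \kappa^+$, and conclude via the Negrepontis--Dow uniqueness theorem. The only difference is that you spell out the cardinal computation $(\kappa^+)^\kappa \leq (2^\kappa)^\kappa = 2^\kappa = \kappa^+$, which the paper leaves implicit.
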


\begin{proof}
Write $\lambda=\kappa^+$. The condition $2^\kappa = \kappa^+$ implies $\lambda = \lambda^{<\lambda}$. Thus, for every $x$ in $S_\kappa$, the space $(S_\kappa \setminus \singleton{x})^*$ is a $\lambda$-Parovi\v{c}enko space of weight $\lambda = \lambda^{<\lambda}$ by Theorem \ref{hui}, and hence, by Negrepontis' characterisation, homeomorphic to $S_\lambda$.
\end{proof}

\begin{mycor}
\label{chchchch}
Under the cardinal assumption $2^\cont=\w_2$, the remainders of $\wstar \setminus \singleton{x}$ and $\wstar \setminus \singleton{y}$ are homeomorphic for all points $x$ and $y$ of $\wstar$. \qed
\end{mycor}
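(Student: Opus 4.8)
The plan is to derive the statement from Corollary \ref{result} applied to $\kappa = \w_1$, after first extracting the relevant cardinal arithmetic from the hypothesis $2^\cont = \w_2$.

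First I would observe that $2^\cont = \w_2$ already forces the Continuum Hypothesis. Indeed, if CH failed then $\cont \geq \w_2$, and monotonicity of cardinal exponentiation would give $2^\cont \geq 2^{\w_2} > \w_2$, contradicting the hypothesis. Hence $\cont = \w_1$, and the hypothesis reads $2^{\w_1} = \w_2$. Setting $\kappa := \w_1$, this yields both ingredients needed below: using CH, $\kappa^{<\kappa} = \w_1^{<\w_1} = \w_1^{\w} = \w_1 = \kappa$, so $\kappa = \kappa^{<\kappa}$; and moreover $2^\kappa = 2^{\w_1} = \w_2 = \kappa^+$.

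Next I would invoke the identification $\wstar = S_{\w_1}$, which holds under CH by Parovi\v{c}enko's characterisation theorem, so that every space $\wstar \setminus \singleton{x}$ is literally of the form $S_\kappa \setminus \singleton{x}$ with $\kappa = \w_1$. Since this $\kappa$ satisfies both $\kappa = \kappa^{<\kappa}$ and $2^\kappa = \kappa^+$, Corollary \ref{result} applies verbatim: for every point $x$ of $S_{\w_1}$ the remainder $(S_{\w_1} \setminus \singleton{x})^*$ is homeomorphic to $S_{\kappa^+} = S_{\w_2}$, independently of $x$.

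Finally, translating back through $\wstar = S_{\w_1}$, this says that for every point $x$ of $\wstar$ the remainder $(\wstar \setminus \singleton{x})^*$ is homeomorphic to the single fixed space $S_{\w_2}$; hence any two such remainders are homeomorphic to one another, which is the claim. The only step carrying genuine content beyond citing earlier results is the cardinal-arithmetic observation that $2^\cont = \w_2$ simultaneously yields CH and $2^{\w_1} = \w_2$, so that Corollary \ref{result} becomes applicable with $\kappa = \w_1$; once these are in place the statement is an immediate specialisation, and there is no further topological obstacle (all the substantive work is already carried by Theorem \ref{hui} and Corollary \ref{result}).
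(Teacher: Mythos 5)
Your proof is correct and is precisely the argument the paper intends (the corollary is stated with a bare \qed, following immediately from Corollary \ref{result}): you correctly supply the one nontrivial observation that $2^\cont=\w_2$ forces CH, hence $\cont=\w_1$, $\w_1=\w_1^{<\w_1}$ and $2^{\w_1}=\w_2=\w_1^+$, so that Corollary \ref{result} applies with $\kappa=\w_1$ and $\wstar\cong S_{\w_1}$. Nothing is missing.
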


This is especially interesting when compared to the fact that there are $2^\cont$ non-homeomorphic subspaces of the form $\wstar \setminus \singleton{x}$, an observation that follows easily from Frol\'ik's result \cite{Frolik} that there are $2^\cont$ orbits in $\wstar$ under its autohomeomorphism group, i.e.\ that $\wstar$ is badly non-homogenous. 

The remaining part of this section is devoted to the proof of Theorem \ref{hui}. For this, we first need a lemma about separation of disjoint open sets of small type in $S_\kappa \setminus \singleton{x}$. Note that a clopen non-compact set of $S_\kappa \setminus \singleton{x}$ is of $S_\kappa$-type $\kappa$ by Lemma \ref{nicelemma}, but of $(S_\kappa \setminus \singleton{x})$-type $1$.

\begin{mylem}
\label{moretechnicalbits}
Assume $\kappa = \kappa^{<\kappa}$. For any two disjoint open sets $V$ and $W$ of $(S_\kappa \setminus \singleton{x})$-type less than $\kappa$, there is a clopen set $A$ of $S_\kappa \setminus \singleton{x}$ such that $V \subset A$ and $W \cap A = \emptyset$.  
\end{mylem}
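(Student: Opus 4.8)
The plan is to build, by transfinite recursion of length $\kappa$ in the spirit of the Butterfly Lemma, a partition of $S_\kappa \setminus \singleton{x}$ into two disjoint open sets $A$ and $B$ with $V \subseteq A$ and $W \subseteq B$. Once this is achieved, $A$ is the complement of the open set $B$ in $S_\kappa \setminus \singleton{x}$, hence clopen there, and $W \cap A = \emptyset$ follows from $W \subseteq B$. To set up, I would fix a clopen neighbourhood base $\set{U_\gamma}:{\gamma < \kappa}$ of $x$ in $S_\kappa$ and put $K_\gamma = S_\kappa \setminus U_\gamma$, so that the $K_\gamma$ are compact clopen sets avoiding $x$ whose union is $S_\kappa \setminus \singleton{x}$, using $\bigcap_\gamma U_\gamma = \singleton{x}$. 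Throughout the recursion I would maintain clopen sets $A_\gamma, B_\gamma$ of $S_\kappa$, none containing $x$, such that all cross-pairs $(A_\delta, B_{\delta'})$ are disjoint, $A_\delta \cap W = B_\delta \cap V = \emptyset$, $A_\delta \cup B_\delta = K_\delta$, and $V \cap K_\delta \subseteq A_\delta$, $W \cap K_\delta \subseteq B_\delta$.

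At stage $\gamma$, writing $P = \bigcup_{\delta<\gamma}A_\delta$ and $Q = \bigcup_{\delta<\gamma}B_\delta$, I would separate the sets $P \cup (V \setminus U_\gamma)$ and $Q \cup (W \setminus U_\gamma)$. The crucial point is that although $V$ and $W$ may each limit onto $x$, the truncated pieces $V \setminus U_\gamma = V \cap K_\gamma$ and $W \setminus U_\gamma$ are of $S_\kappa$-type less than $\kappa$, since each $V_\alpha \cap K_\gamma$ is clopen in the compact clopen set $K_\gamma$ and hence clopen in $S_\kappa$, and their closures sit inside $K_\gamma$, which avoids $x$. As $P$ and $Q$ are unions of fewer than $\kappa$ clopen sets and $\kappa$ is regular, both $P \cup (V\setminus U_\gamma)$ and $Q \cup (W \setminus U_\gamma)$ have type less than $\kappa$, and the maintained invariants together with $V \cap W = \emptyset$ make them disjoint. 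The $F_\kappa$-space property of $S_\kappa$ (fact $(2')$) then gives them disjoint closures, so by compactness and zero-dimensionality I can pick a clopen set $C_\gamma$ of $S_\kappa$ containing $\closure{P \cup (V\setminus U_\gamma)}$ and disjoint from $\closure{Q \cup (W\setminus U_\gamma)}$. Setting $A_\gamma = C_\gamma \cap K_\gamma$ and $B_\gamma = (S_\kappa \setminus C_\gamma) \cap K_\gamma$ then restores every invariant; checking that the cross-pairs stay disjoint, that $A_\gamma$ misses $W$ and $B_\gamma$ misses $V$, and that $A_\gamma \cup B_\gamma = K_\gamma$ is routine bookkeeping from the containments defining $C_\gamma$.

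Finally I would set $A = \bigcup_{\gamma<\kappa} A_\gamma$ and $B = \bigcup_{\gamma<\kappa} B_\gamma$. These are disjoint open sets avoiding $x$, because all cross-pairs are disjoint, and $A \cup B = \bigcup_\gamma K_\gamma = S_\kappa \setminus \singleton{x}$, so $A$ is clopen in $S_\kappa \setminus \singleton{x}$. Each point of $V$ lies outside some $U_\gamma$, hence in $V \cap K_\gamma \subseteq A_\gamma \subseteq A$, giving $V \subseteq A$; symmetrically $W \subseteq B$, whence $W \cap A = \emptyset$, as required. I expect the main obstacle to be exactly the case in which both $V$ and $W$ accumulate at $x$: then their $S_\kappa$-closures meet at $x$ and cannot be separated by any clopen set of $S_\kappa$. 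Truncating by $U_\gamma$ at every stage is what sidesteps this, reducing separation to pieces of type less than $\kappa$ whose closures avoid $x$, and the butterfly-style cross-pair bookkeeping is what guarantees these small separated pieces assemble into a single clopen partition of $S_\kappa \setminus \singleton{x}$.
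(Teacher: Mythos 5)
Your proof is correct and follows essentially the same route as the paper: a butterfly-style transfinite recursion in which $V$ and $W$ are truncated by the neighbourhoods $U_\gamma$ at each stage so that the pieces to be separated have $S_\kappa$-type less than $\kappa$, allowing the $F_\kappa$-space property to supply a clopen partition. The only cosmetic difference is that the paper first dispenses with the easy case where $\closure{V}$ and $\closure{W}$ are already disjoint in $S_\kappa$, whereas your uniform construction absorbs that case into the recursion.
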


\begin{proof}
The space $S_\kappa \setminus \singleton{x}$ is an $F_\kappa$-space by \cite[14.1]{Ultrafilters}, so $V$ and $W$ have disjoint closures in $S_\kappa \setminus \singleton{x}$. If $V$ and $W$ have disjoint closures in $S_\kappa$, by compactness there is a clopen set $A \subset S_\kappa$ separating $V$ from $W$. Clearly, the intersection of $A$ with $S_\kappa \setminus \singleton{x}$ is as required. 

So assume that both $V$ and $W$ limit onto $x$. We build a butterfly such that its wings separate $V$ and $W$. Let $\set{U_\alpha}:{\alpha < \kappa}$ be a clopen neighbourhood base for $x$ in $S_\kappa$. By transfinite recursion we define $S_\kappa$-clopen sets $A_\alpha$ and $B_\alpha$ for $\alpha < \kappa$ not containing $x$ such that all all $(A_\beta,B_\gamma)$-, $(A_\beta,W)$- and $(V,B_\gamma)$-pairs are disjoint and $A_\beta \cup B_\beta$ covers $S_\kappa \setminus U_\beta$ for all $\beta,\gamma < \kappa$.

Once the construction is completed, we define disjoint open sets $A = \Union_{\alpha < \kappa} A_\alpha$ and $B = \Union_{\alpha < \kappa} B_\alpha$. Their union covers of all of $S_\kappa \setminus \singleton{x}$ and $V \subset A$ and $W \subset B$ as required.

It remains to complete the recursive construction. Let $\alpha < \kappa$ and assume that $A_\beta$ and $B_\beta$ have been defined for all  ordinals $\beta < \alpha$ satisfying the inductive requirements. Since $V$ is of $(S_\kappa \setminus \singleton{x})$-type less than $\kappa$, it follows easily that $V \setminus U_\alpha$ is of $S_\kappa$-type less than $\kappa$. So the sets 
$$(V \cup \bigcup_{\beta< \alpha} A_\beta) \setminus U_\alpha \quad \text{and} \quad (W \cup \bigcup_{\beta< \alpha} B_\beta) \setminus U_\alpha$$ are disjoint open sets of $S_\kappa$-type less than $\kappa$, and by the $F_\kappa$-space property there is a clopen partition $(C,D)$ of $S_\kappa$ separating them. We put $A_\alpha = C\setminus U_\alpha$ and $B_\alpha = D\setminus U_\alpha$, preserving the inductive assumptions.  
\end{proof}

\begin{proof}[Proof of Theorem \ref{hui}]
Because of Theorem \ref{Dowsresult}, it suffices to prove the theorem for non-$P_\kappa$-points $x$. So let us fix a non-$P_\kappa$-point $x$ of $S_\kappa$ and an open subset $V \subset S_\kappa$ of type less than $\kappa$ that contains $x$ in its boundary. By the $F_\kappa$-space property, $V$ is $\Cstar$-embedded in $S_\kappa$. In particular, if we write $X=S_\kappa \setminus \singleton{x}$ then the closure of $V$ in $X$ has a one-point Stone-\v{C}ech compactification. This means the set $V$ limits onto precisely one point in the remainder of $X$. For the remaining parts of this proof, we denote this unique point in $\closureIn{V}{\beta X} \setminus X$ by $\star$. 

\begin{myclm}
\label{claim1}
For every clopen non-empty set $C$ of $X^*$ not containing $\star$ there is a clopen non-compact set $D \subset X$ which misses $V$ such that $D^*=\closure{D} \setminus D = C$. Moreover, every such $D$ is homeomorphic to $S_\kappa \setminus \singleton{p}$ for a $P_\kappa$-point $p$.
\end{myclm}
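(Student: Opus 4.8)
The plan is to establish the two assertions separately. For the existence of $D$, I would first lift $C$ from the remainder to all of $\beta X$: since $\beta X$ is compact and zero-dimensional by Theorem~\ref{main1} and $X^* = \beta X \setminus X$ is closed in it, the disjoint closed sets $C$ and $X^* \setminus C$ can be separated by a clopen set $E$ of $\beta X$ with $E \cap X^* = C$. Setting $D_0 = E \cap X$ gives a clopen subset of $X$ whose closure in $\beta X$ is exactly $E$ (because $X$ is dense and $E$ is clopen), so $D_0^* = E \cap X^* = C$; in particular $D_0$ is non-compact, since $C \neq \emptyset$.

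The delicate step of the first assertion is to make $D$ avoid $V$ without destroying $D^* = C$, and this is exactly where $\star \notin C$ is used. As $E$ is clopen and $\closureIn{V}{\beta X} \cap X^* = \singleton{\star}$ with $\star \notin E$, I obtain $\closure{V \cap E}^{\beta X} \subset \closureIn{V}{\beta X} \cap E$, whose intersection with $X^*$ is $\singleton{\star} \cap E = \emptyset$; hence $\closure{V \cap E}^{\beta X}$ is a compact subset of $X$. Since $X$ is an open, hence locally compact, subspace of the compact zero-dimensional space $S_\kappa$, this compact set can be covered by a compact clopen set $K \subset X$. Then $D = D_0 \setminus K$ is clopen in $X$, disjoint from $V$ (because $V \cap D_0 \subset K$), and $D^* = D_0^* = C$, as deleting the compact set $K$ does not change the trace on the remainder. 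This $D$ witnesses the first assertion.

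For the \emph{moreover} part, let $D$ be \emph{any} clopen non-compact subset of $X$ that misses $V$ and has $D^* = C$. Its closure in $S_\kappa$ equals $D \cup \singleton{x}$, the one-point compactification of $D$, so Lemma~\ref{superlemma} yields a homeomorphism $D \cup \singleton{x} \cong S_\kappa$ carrying $x$ to some point $p$; then $D \cong S_\kappa \setminus \singleton{p}$, and it remains to show that $p$ is a $P_\kappa$-point, i.e.\ that $x$ is a $P_\kappa$-point of $D \cup \singleton{x}$. This is the main obstacle, and it is precisely where ``misses $V$'' is exploited. By the characterisation in fact $(6)$ of Section~\ref{section2}, if $x$ were not a $P_\kappa$-point it would lie in the boundary of some open $W$ of $(D \cup \singleton{x})$-type less than $\kappa$. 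Writing $W$ as a union of fewer than $\kappa$ clopen subsets of $D \cup \singleton{x}$ — none of which contains $x$, since $x \notin W$ — each such piece is a compact clopen subset of $D$ and hence clopen in $S_\kappa$, so $W$ is an open set of $S_\kappa$-type less than $\kappa$ with $x \in \closure{W}^{S_\kappa}$. But $W \subset D$ is disjoint from $V$, while $V$ is an open set of type less than $\kappa$ with $x \in \closure{V}^{S_\kappa}$. Two disjoint open sets of type less than $\kappa$ both limiting onto $x$ contradict the $F_\kappa$-space property of $S_\kappa$ (fact $(2')$). Thus no such $W$ exists, $x$ is a $P_\kappa$-point of $D \cup \singleton{x}$, and $p$ is a $P_\kappa$-point as required.
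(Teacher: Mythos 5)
Your proof is correct and takes essentially the same route as the paper: lift $C$ to a clopen $E\subset X$ with $E^*=C$, use $\star\notin C$ to see that $\closureIn{E\cap V}{\beta X}$ stays inside $X$, adjust $E$ by a compact clopen set so that it misses $V$ (which leaves $D^*=C$ unchanged), and then deduce from Lemma \ref{superlemma} and the $F_\kappa$-property that $x$ is a $P_\kappa$-point of $D\cup\singleton{x}\cong S_\kappa$, since $V$ is a disjoint small-type open set already limiting onto $x$. The only cosmetic difference is that the paper packages this last step as an application of Corollary \ref{mycor12}, whereas you reprove it directly via fact $(6)$.
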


To see that the claim holds, let $C$ be a clopen subset of $X^*$ not containing $\star$ and find a clopen non-compact subset $E$ of $X$ with $E^*=C$. There is a clopen neighbourhood $U$ of $x$ in $S_\kappa$ such that $U \cap (E \cap V) = \emptyset$: otherwise, the closure of $E \cap V$ in $\beta X$ would grow into the remainder. But $$\closureIn{E \cap V}{\beta X} \setminus X \subset  \closureIn{E}{\beta X} \setminus X \cap \closureIn{V}{\beta X} \setminus X = C \cap \singleton{\star} = \emptyset,$$
a contradiction. Hence, for some suitable $U$, the clopen non-compact set $D=E\cap U$ of $S_\kappa$ does not intersect $V$. And since the symmetric difference of $D$ and $E$ is compact, it follows from \cite[2.6d]{Ultrafilters} that $D^* =E^* =C$, as claimed.

To see that $D$ is homeomorphic to $S_\kappa \setminus \singleton{p}$ for a $P_\kappa$-point $p$, note that $D$ and $X \setminus D$ form a butterfly around $x$ in $S_\kappa$ such that $V$ witnesses that $x$ is not a $P_\kappa$-point of $S_\kappa \setminus D$. Hence, $D$ is homeomorphic to $S_\kappa \setminus \singleton{p}$ for a $P_\kappa$-point $p$ by Corollary \ref{mycor12}, completing the proof of Claim \ref{claim1}. 

\begin{myclm}
\label{claim2}
Every compact clopen set of $X^*\setminus \singleton{\star}$ is a $\kappa^+$-Parovi\v{c}enko space.
\end{myclm}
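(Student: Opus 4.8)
The plan is to reduce Claim \ref{claim2} directly to Dow's Theorem \ref{Dowsresult} by means of Claim \ref{claim1}, so that most of the work has already been carried out. First I would note that the compact clopen subsets of $X^* \setminus \singleton{\star}$ are precisely the clopen subsets $C$ of $X^*$ with $\star \notin C$. Indeed, since $X^*$ is compact Hausdorff and $X^* \setminus \singleton{\star}$ is open in $X^*$, a subset that is clopen in $X^* \setminus \singleton{\star}$ and compact is open in $X^*$ (being open in an open subset) and closed in $X^*$ (being a compact subset of a Hausdorff space), hence clopen in $X^*$ and avoiding $\star$; conversely, any clopen $C \subset X^*$ with $\star \notin C$ is closed in the compact space $X^*$, hence compact, and clopen in $X^* \setminus \singleton{\star}$. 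The empty set is vacuously a $\kappa^+$-Parovi\v{c}enko space, so we may assume $C$ is non-empty.

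For such a non-empty $C$, Claim \ref{claim1} provides a clopen non-compact set $D \subset X$ with $D^* = C$ that is homeomorphic to $S_\kappa \setminus \singleton{p}$ for some $P_\kappa$-point $p$. The crucial step is to identify $C = D^*$, which is computed as $\closureIn{D}{\beta X} \setminus D$ inside $\beta X$, with the intrinsic Stone-\v{C}ech remainder $\beta D \setminus D$ of $D$. Because $D$ is clopen in $X$, it is $\Cstar$-embedded in $X$ (a bounded continuous function on $D$ extends continuously by any constant across the clopen complement $X \setminus D$); and since $X$ is densely $\Cstar$-embedded in $\beta X$, the closure $\closureIn{D}{\beta X}$ is a copy of $\beta D$ meeting $X$ exactly in $D$. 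Consequently the subspace $C = \closureIn{D}{\beta X} \setminus D$ of $X^*$ is homeomorphic to $\beta D \setminus D$, and hence to $(S_\kappa \setminus \singleton{p})^*$.

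To finish, I would invoke Dow's Theorem \ref{Dowsresult}: as $p$ is a $P_\kappa$-point, the remainder $(S_\kappa \setminus \singleton{p})^*$ is a $\kappa^+$-Parovi\v{c}enko space, and transporting this along the homeomorphism shows that $C$ itself is $\kappa^+$-Parovi\v{c}enko. The only genuinely delicate point is the identification of the externally computed remainder $D^*$ with the abstract remainder $\beta D \setminus D$; this rests entirely on the $\Cstar$-embedding of the clopen set $D$, and everything else in the argument is routine bookkeeping on top of Claim \ref{claim1} and Theorem \ref{Dowsresult}.
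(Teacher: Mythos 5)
Your proposal is correct and follows essentially the same route as the paper: apply Claim \ref{claim1} to realise the compact clopen set as $D^*$ for a $D$ homeomorphic to $S_\kappa \setminus \singleton{p}$ with $p$ a $P_\kappa$-point, and then invoke Theorem \ref{Dowsresult}. The extra care you take in identifying $D^* = \closureIn{D}{\beta X}\setminus D$ with $\beta D \setminus D$ via the $\Cstar$-embedding of the clopen set $D$ is exactly the standard fact the paper uses implicitly, so this is the same argument with the bookkeeping made explicit.
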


By Claim \ref{claim1}, a compact clopen set of $X^*\setminus \singleton{\star}$ is of the form $(S_\kappa \setminus \singleton{p})^*$ for a $P_\kappa$-point $p$ of $S_\kappa$. Claim \ref{claim2} now follows from Theorem \ref{Dowsresult}.

\begin{myclm}
\label{claim3}
The point $\star$ is a $P_{\kappa^+}$-point of $X^*$.
\end{myclm}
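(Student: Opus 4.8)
The plan is to prove the dual form of the statement: whenever $\set{C_\gamma}:{\gamma < \kappa}$ is a family of clopen subsets of $X^*$, none of which contains $\star$, then $\star \notin \closure{\Union_{\gamma<\kappa} C_\gamma}$. Since $X^*$ is compact and zero-dimensional, this says precisely that the intersection of $\kappa$-many clopen neighbourhoods of $\star$ again contains a clopen neighbourhood, which is exactly the assertion that $\star$ is a $P_{\kappa^+}$-point. By Claim \ref{claim1} I may write each $C_\gamma = D_\gamma^*$ for a clopen non-compact $D_\gamma \subset X$ that misses $V$. The crucial reformulation is then: it suffices to produce a single clopen set $E$ of $X$ with $V \subset E$ and $E \cap D_\gamma$ compact for every $\gamma < \kappa$. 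Indeed, for such $E$ the trace $E^*$ is a clopen neighbourhood of $\star$ (because $V \subset E$ forces $E$ to limit onto $\star$), while $E^* \cap C_\gamma = (E \cap D_\gamma)^* = \emptyset$, so $E^*$ witnesses $\star \notin \closure{\Union_\gamma C_\gamma}$.

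To build $E$ I would run a butterfly construction around $x$ of length $\kappa$, exactly in the spirit of Lemmas \ref{butterfly2} and \ref{moretechnicalbits}. Fixing a clopen neighbourhood base $\set{U_\alpha}:{\alpha<\kappa}$ of $x$ in $S_\kappa$, I recursively choose clopen compact sets $A_\alpha$ and $B_\alpha$ of $S_\kappa$ not containing $x$, with $A_\alpha \cup B_\alpha = S_\kappa \setminus U_\alpha$, keeping all cross-pairs disjoint and $V$ on the $A$-side. The whole point of the butterfly is that, although each wing has $S_\kappa$-type $\kappa$ by Lemma \ref{nicelemma}, the final wing $A = \Union_{\alpha<\kappa} A_\alpha$ is clopen in $X$ and contains $V$; this $A$ is the candidate for $E$. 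At stage $\alpha$ the separation is carried out with the $F_\kappa$-space property of $S_\kappa$ applied to two disjoint open sets of type less than $\kappa$, and feeding the sets $D_\gamma$ into the opposite wing is legitimate because, for $\gamma \le \alpha$, the set $D_\gamma \setminus U_\alpha$ is a \emph{compact} clopen subset of $S_\kappa$ (of type $1$), so the $B$-side stays of type less than $\kappa$ at every stage below $\kappa$. Handling each $D_\gamma$ from stage $\gamma$ onwards prevents any new overlap of the wing with $D_\gamma$ after stage $\gamma$, so that $A \cap D_\gamma$ reduces to the contribution $\Union_{\alpha<\gamma}(A_\alpha \cap D_\gamma)$ of the earlier stages.

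The main obstacle, and the real content of the claim, is to guarantee that this leftover intersection $A \cap D_\gamma$ is genuinely \emph{compact}, rather than merely of $X$-type less than $\kappa$. Here the hypothesis that $x$ is a \emph{non}-$P_\kappa$-point is what makes life hard: by fact $(6)$ of Section \ref{section2} (together with a short compactness argument) $x$ possesses no nested clopen neighbourhood base, so the individual pieces $A_\alpha$, each living outside $U_\alpha$, can creep up to $x$ through their union at limit stages, and with them the incidental overlaps $A_\alpha \cap D_\gamma$. The way I would control this is to keep the wing ``slaved to $V$'' near $x$: since $V \cap D_\gamma = \emptyset$ and $D_\gamma$ is open, $\closureIn{V}{S_\kappa} \cap D_\gamma = \emptyset$, and in fact $\closureIn{V}{S_\kappa} \cap \closureIn{D_\gamma}{S_\kappa} = \singleton{x}$, so any part of the wing coinciding with $\closure{V}$ automatically avoids $D_\gamma$ away from $x$. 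Maintaining the recursive invariant that $A_\alpha \setminus \closure{V}$ is compact and bounded away from $x$ keeps $A \cap D_\gamma$ inside a compact set disjoint from $x$; the almost-compactness of $V$, that is, the fact recorded before Claim \ref{claim1} that $\closureIn{V}{X}$ has a one-point Stone-\v{C}ech remainder, is exactly what is needed to push this invariant through the limit stages.

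Finally I would isolate the genuinely easy sub-case to make the structure transparent. If the ``diagonal'' set $S = \closureIn{V}{S_\kappa} \cap \closureIn{\Union_\gamma D_\gamma}{S_\kappa}$ does not accumulate at $x$, then some neighbourhood $U$ of $x$ meets $\closure{V}$ and $\closure{\Union_\gamma D_\gamma}$ in disjoint closed sets, and a single appeal to the normality of $U$ produces a clopen $E \subset U$ with $V \cap U \subset E$ and $E \cap \Union_\gamma D_\gamma = \emptyset$, yielding the required neighbourhood of $\star$ outright. The whole weight of the argument therefore lies in the complementary case, in which the $D_\gamma$ conspire to limit onto $x$ along $\closure{V}$, and it is precisely there that the butterfly recursion together with the invariant above must do its work.
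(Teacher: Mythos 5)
Your first two paragraphs set up exactly the reduction the paper itself uses (in dual form): build a butterfly $X=A\cup B$ with $V\subset A$ and $A\cap D_\gamma$ compact (equivalently, $D_\gamma$ almost contained in $B$) for every $\gamma<\kappa$, and you correctly locate the difficulty in the overlaps $\bigcup_{\beta<\gamma}(A_\beta\cap D_\gamma)$ accumulating at $x$. The gap is in your third paragraph: the mechanism you propose for controlling these overlaps cannot work. To deduce compactness of $A\cap D_\gamma$ from $\closureIn{V}{S_\kappa}\cap D_\gamma=\emptyset$ you need the uniform statement $A\cap U\subset\closureIn{V}{S_\kappa}$ for some clopen neighbourhood $U$ of $x$ (the non-uniform version of your invariant, that each $A_\alpha\setminus\closure{V}$ separately is compact and misses a neighbourhood of $x$, holds trivially and says nothing about the union). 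But no clopen $A\subset X$ with $V\subset A$ can satisfy this: since $A$ is closed in $X$ it contains $\closureIn{V}{S_\kappa}\setminus\singleton{x}$, so $A\cap U$ would equal $\closureIn{V\cap U}{S_\kappa}\setminus\singleton{x}$ and be open; by the $G_\kappa$-property (fact $(5)$b of Section \ref{section2}) this forces $\closureIn{V\cap U}{S_\kappa}=(V\cap U)\cup\singleton{x}$, making $V\cap U$ a clopen non-compact subset of $S_\kappa\setminus\singleton{x}$ of $S_\kappa$-type less than $\kappa$, contradicting Lemma \ref{nicelemma}. In other words the $A$-wing is forced to bulge out of $\closure{V}$ arbitrarily close to $x$ --- this is the non-$P_\kappa$-ness of $x$ reasserting itself --- so ``hugging $\closure{V}$'' cannot shield $A$ from the $D_\gamma$. (Your ``easy sub-case'' also does not parse: $x$ lies in $\closure{V}$ and in every $\closure{D_\gamma}$, so the two traces on $U$ are never disjoint closed sets, and $\bigcup_\gamma D_\gamma$ has type $\kappa$, out of reach of the $F_\kappa$-property.)

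The missing idea is the ``moreover'' clause of Claim \ref{claim1}: each $D_\gamma\cup\singleton{x}$ is a copy of $S_\kappa$ in which $x$ \emph{is} a $P_\kappa$-point, because the witness $V$ to non-$P_\kappa$-ness lies entirely in the opposite wing. The paper therefore controls $B$ near each $D_\gamma$ rather than $A$ near $V$: at stage $\gamma$ the set $\bigcap_{\beta<\gamma}(S_\kappa\setminus A_\beta)$ is an intersection of fewer than $\kappa$ clopen neighbourhoods of $x$, so by the relative $P_\kappa$-property it contains a clopen neighbourhood $E'_\gamma$ of $x$ in $D_\gamma\cup\singleton{x}$; setting $E_\gamma=E'_\gamma\setminus\singleton{x}$ gives $E_\gamma=^*D_\gamma$ with $E_\gamma$ disjoint from every earlier $A_\beta$, and it is $E_\gamma$, not $D_\gamma$, that is fed into the $B$-wing from then on via Lemma \ref{moretechnicalbits}. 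Then $D_\gamma\setminus B\subset D_\gamma\setminus E_\gamma$ is compact. Without this shrinking step your recursion has no way to dispose of the already-committed overlaps, and the argument does not go through.
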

 
To prove Claim \ref{claim3} we show that whenever $\set{C_\alpha}:{\alpha < \kappa}$ is a collection of $X^*$-clopen sets not containing $\star$, there is a clopen set $B^* \subset X^*$ not containing $\star$ such that $\bigcup_{\alpha < \kappa} C_\alpha \subset B^*$.

From Claim \ref{claim1}, we know that for every $C_\alpha$ there is a clopen non-compact subset $D_\alpha \subset X$ such that $D_\alpha \cap V= \emptyset$ and $D_\alpha^*=C_\alpha$. In $X$, we write $F \subset^* G$ (read: $F$ is \emph{almost contained} in $G$) if there is a clopen neighbourhood $U$ of $x$ in $S_\kappa$, such that $F \cap U \subset G$. Write $F=^*G$ if $F\subset^*G$ and $G \subset^*F$. We will use the well-known fact that $C_\alpha \subset C_\beta$ in $X^*$ if and only if $D_\alpha \subset^* D_\beta$ in $X$. 

Similar to the proof of Lemma \ref{moretechnicalbits}, our aim is to build a butterfly in $X$ with wings $A$ and $B$ such that $V \subset A$ and $D_\alpha \subset^* B$ for all $\alpha < \kappa$. Clearly then, $B^*$ is as required. 

To construct such a butterfly, fix a neighbourhood base $\set{U_\alpha}:{\alpha < \kappa}$ of $x$ in $S_\kappa$ consisting of clopen sets. By recursion we will define families of $S_\kappa$-clopen sets $\set{A_\alpha}:{\alpha < \kappa}$ and $\set{B_\alpha}:{\alpha < \kappa}$  and a third family $\set{E_\alpha}:{\alpha < \kappa}$ of $X$-clopen sets such that for all $\alpha, \beta < \kappa$
\begin{enumerate}
\item $A_\alpha \cap B_\beta = \emptyset$ and $A_\alpha \cup B_\alpha = S_\kappa \setminus U_\alpha$, 
\item $A_\alpha \cap E_\beta = \emptyset$ and $B_\alpha \cap V = \emptyset$,
\item $E_\alpha \subset D_\alpha$ and $E_\alpha=^*D_\alpha$.
\end{enumerate} 
Once the construction is completed, it follows from $(1)$ that $A=\bigcup_{\alpha<\kappa}A_\alpha$ and $B=\bigcup_{\alpha<\kappa}B_\alpha$ partition $X$ into disjoint open sets. Condition $(2)$ guarantees both $V \subset A$ and $E_\alpha \subset B$ and finally, condition $(3)$ gives $D_\alpha \subset^* B$ as desired. 

It remains to complete the recursive construction. Let $\alpha < \kappa$ and assume that $A_\beta$, $B_\beta$ and $E_\beta$ have been defined for all ordinals $\beta < \alpha$ satisfying the inductive assumptions. The set $S_\kappa \setminus \bigcup_{\beta < \alpha} A_\beta$ is an intersection of less than $\kappa$-many clopen sets in $S_\kappa$ containing $x$. In particular, it is a non-empty intersection of less than $\kappa$-many clopen sets in $D_\alpha \cup \singleton{x}$. But by Claim \ref{claim1}, the point $x$ is a $P_\kappa$-point with respect to $D_\alpha \cup \singleton{x}$, and hence there is a $D_\alpha \cup \singleton{x}$-clopen neighbourhood $E'_\alpha$ of $x$ in this space such that $E'_\alpha \subset S_\kappa \setminus \bigcup_{\beta < \alpha} A_\beta$. Now put $E_\alpha = E'_\alpha \setminus \singleton{x}$ and note that $E_\alpha=^*D_\alpha$.

By Lemma \ref{moretechnicalbits} there exist $X$-clopen sets $C$ and $D$ partitioning $X$ and containing the disjoint open sets 
$$V \cup \Union_{\beta < \alpha} A_\beta \quad \text{and} \quad \Union_{\beta < \alpha} B_\beta \cup \bigcup_{\beta \leq \alpha} E_\beta$$ respectively. By defining  $A_\alpha = C \setminus U_\alpha$ and $B_\alpha = D \setminus U_\alpha$ it is clear that $A_\alpha$, $B_\alpha$ and $E_\alpha$ satisfy the inductive assumptions $(1)$-$(3)$. The proof of Claim \ref{claim3} is complete.

\begin{myclm}
\label{claim4}
The space $X^*$ is a $\kappa^+$-Parovi\v{c}enko space.
\end{myclm}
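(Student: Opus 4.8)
The plan is to assemble Claims \ref{claim1}--\ref{claim3} into a verification of the two outstanding $\kappa^+$-Parovi\v{c}enko properties for $X^*$. By Theorem \ref{main1} the space $X^*$ is already compact and zero-dimensional, and by Lemma \ref{atomless2} it has no isolated points, so only the $F_{\kappa^+}$- and $G_{\kappa^+}$-properties remain. Throughout, the point $\star$ plays the role of the single distinguished point: away from $\star$ the space is locally a $\kappa^+$-Parovi\v{c}enko space by Claim \ref{claim2}, while at $\star$ itself the $P_{\kappa^+}$-property of Claim \ref{claim3} takes over.

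For the $G_{\kappa^+}$-property I would first reduce to intersections of clopen sets through a common point: given open sets $\set{O_\alpha}:{\alpha<\lambda}$ with $\lambda \leq \kappa$ and a point $y \in \bigcap_\alpha O_\alpha$, pick clopen $V_\alpha$ with $y \in V_\alpha \subset O_\alpha$, so that it suffices to show that $G=\bigcap_{\alpha<\lambda} V_\alpha$ has non-empty interior. If $\star \in G$, then every $V_\alpha$ is a clopen neighbourhood of $\star$, and Claim \ref{claim3} directly gives that $G$ contains a neighbourhood of $\star$. If $\star \notin G$, fix $\alpha_0$ with $\star \notin V_{\alpha_0}$; then $V_{\alpha_0}$ is a compact clopen subset of $X^* \setminus \singleton{\star}$, hence a $\kappa^+$-Parovi\v{c}enko space by Claim \ref{claim2}, and $G$ is a non-empty intersection of at most $\kappa$ clopen subsets of $V_{\alpha_0}$, so it has non-empty interior there and therefore in $X^*$.

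For the $F_{\kappa^+}$-property it suffices, by normality and $(2')$ of Section \ref{section2}, to show that disjoint open sets $U,W$ of type less than $\kappa^+$ have disjoint closures. The crucial preliminary observation, extracted from Claim \ref{claim3}, is that $\star$ cannot lie in the closure of a set of type less than $\kappa^+$ that avoids it: if $U=\bigcup_{\alpha<\kappa} U_\alpha$ with $\star \notin U$, then the clopen sets $X^* \setminus U_\alpha$ are neighbourhoods of $\star$ whose intersection $X^* \setminus U$ contains a neighbourhood of $\star$, so $\star \notin \closure{U}$. Hence $\star \in \closure{U}$ forces $\star \in U$, and since $U,W$ are disjoint, at most one of $\closure{U},\closure{W}$ contains $\star$. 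Assuming $\star \notin \closure{W}$, I would choose a clopen neighbourhood $N$ of $\star$ disjoint from $\closure{W}$ and set $M=X^* \setminus N$, a compact clopen subset of $X^* \setminus \singleton{\star}$; by Claim \ref{claim2} it is a $\kappa^+$-Parovi\v{c}enko space. Since $W \subset M$ and $U \cap M$ and $W$ are disjoint sets of type less than $\kappa^+$ in $M$, its $F_{\kappa^+}$-property yields $\closure{U \setminus N} \cap \closure{W} = \emptyset$; combining this with $\closure{U \cap N} \subset N$ and $N \cap \closure{W}=\emptyset$, and using $\closure{U}=\closure{U\cap N}\cup\closure{U\setminus N}$, we conclude $\closure{U} \cap \closure{W} = \emptyset$.

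I expect the main obstacle to be the $F_{\kappa^+}$-property, and specifically the bookkeeping at $\star$. The delicate point is ruling out $\star$ lying simultaneously in the closures of two disjoint small-type sets, which is exactly where the $P_{\kappa^+}$-property of Claim \ref{claim3} is indispensable, together with the careful splitting of $U$ into its parts inside and outside the separating neighbourhood $N$ so that the $F_{\kappa^+}$-property of the Parovi\v{c}enko piece $M$ can be invoked. The $G_{\kappa^+}$-property, by contrast, falls out routinely from the dichotomy ``$\star \in G$ or not'' once the reduction to clopen sets is in place.
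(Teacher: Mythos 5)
Your proposal is correct and follows the same strategy as the paper, which disposes of Claims 1--3 in a single sentence: any failure of the $F_{\kappa^+}$- or $G_{\kappa^+}$-property is witnessed at a point, Claim \ref{claim2} forces that point to be $\star$, and Claim \ref{claim3} rules $\star$ out. You have simply written out in full the localisation argument (splitting along a clopen neighbourhood $N$ of $\star$ and invoking $(2')$ in the Parovi\v{c}enko piece $M=X^*\setminus N$) that the paper leaves implicit, and your details check out.
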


For this, note that $X^*$ is a zero-dimensional compact space without isolated points by Lemma \ref{atomless2}. Thus it only remains to check for the $F_{\kappa^+}$- and the $G_{\kappa^+}$-space property. So suppose one of these conditions fails. This is witnessed by some point $x$. By Claim \ref{claim2}, $x$ must be $\star$, but this is a contradiction as $\star$ is a $P_{\kappa^+}$-point by Claim \ref{claim3}. This proves Claim \ref{claim4}.

To complete the proof, it remains to establish the weight of $X^*$. But every $\kappa^+$-Parovi\v{c}enko space has weight at least $2^\kappa$, which can either be shown directly by embedding a binary tree of clopen sets of height $\kappa+1$, or, in the case of $X^*$, be concluded from Theorem \ref{cellularity}. 
\end{proof}

\section{\texorpdfstring{\bf Cardinal invariants of $\mathbf{\left(S_\kappa \setminus \singleton{x}\right)^*}$}{Cardinal Invariants of Remainders}}
\label{section8}

The results from the previous section have shown that under $2^{\kappa}=\kappa^+$, all remainders of the form $(S_\kappa \setminus \singleton{x})^*$ are homeomorphic to $S_{\kappa^+}$, which settles all further topological questions regarding cardinality, weight and cellularity of these spaces. However, we do not yet know what happens in absence of $2^{\kappa}=\kappa^+$. In this section we therefore investigate cardinal invariants of $(S_\kappa \setminus \singleton{x})^*$ without additional set-theoretic assumptions beyond its existence, i.e.\ $\kappa=\kappa^{<\kappa}$.

The next lemma says that for questions such as size, weight and cellularity of the remainder of $S_\kappa \setminus \singleton{x}$, it is enough to focus on the remainder of $S_\kappa \setminus \singleton{p}$ for a $P_\kappa$-point $p$.
 
\begin{mylem}
\label{wlog}
Assume $\kappa=\kappa^{<\kappa}$. For all $x \in S_\kappa$ the space $(S_\kappa \setminus \singleton{x})^*$ contains a clopen copy of $(S_\kappa \setminus \singleton{p})^*$ for a $P_\kappa$-point $p$.
\end{mylem}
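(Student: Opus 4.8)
The plan is to realise the required clopen copy as the Stone-\v{C}ech remainder of a single wing of a butterfly around $x$. First I would fix $x \in S_\kappa$, write $X = S_\kappa \setminus \singleton{x}$, and invoke the Butterfly Lemma (Lemma \ref{butterfly2}): since $x$ is a strong butterfly point, $X$ is partitioned into clopen non-compact wings $A$ and $B$ with $\closure{A} \cap \closure{B} = \singleton{x}$, the closures taken in $S_\kappa$. By Corollary \ref{mycor12} I may assume without loss of generality that $A \cup \singleton{x}$ is a copy of $S_\kappa$ in which $x$ is a $P_\kappa$-point. Letting $p$ denote the image of $x$ under a homeomorphism $A \cup \singleton{x} \to S_\kappa$, the point $p$ is a $P_\kappa$-point of $S_\kappa$, and the restriction of this homeomorphism witnesses $A \cong S_\kappa \setminus \singleton{p}$. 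This step is where the genuine work has already been done for us.

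It then remains to show that the remainder $A^*$ of this wing sits inside $X^*$ as a clopen copy of $(S_\kappa \setminus \singleton{p})^*$. The key observation is that a clopen subset is always $\Cstar$-embedded: any bounded continuous function on $A$ extends continuously to $X$ by declaring it $0$ on the clopen complement $X \setminus A$. Hence $A$ is $\Cstar$-embedded in $X$ and, by transitivity of $\Cstar$-embedding together with the dense $\Cstar$-embedding of $X$ in $\beta X$, also in $\beta X$; consequently $\closureIn{A}{\beta X}$ is a copy of $\beta A$. Moreover, extending the characteristic function $\chi_A \colon X \to \Set{0,1}$ over $\beta X$ shows that $\closureIn{A}{\beta X}$ is precisely the clopen set on which the extension takes the value $1$. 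Since $A$ is clopen, hence closed, in $X$ we have $\closureIn{A}{\beta X} \cap X = A$, so that
$$A^* = \closureIn{A}{\beta X} \setminus A = \closureIn{A}{\beta X} \cap X^*$$
is a clopen subset of $X^*$. As $\closureIn{A}{\beta X} = \beta A$ and $A \cong S_\kappa \setminus \singleton{p}$, the remainder $A^*$ is homeomorphic to $(S_\kappa \setminus \singleton{p})^*$, which gives the desired clopen copy.

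I do not anticipate a genuine obstacle beyond correctly combining standard facts about Stone-\v{C}ech compactifications: that clopen subsets are $\Cstar$-embedded, that the closure in $\beta X$ of a $\Cstar$-embedded subset is its own Stone-\v{C}ech compactification, and that forming the remainder is a topological invariant, so that $A^* \cong (S_\kappa \setminus \singleton{p})^*$. The only mild care needed is to confirm that $\closureIn{A}{\beta X}$ is genuinely clopen in $\beta X$ via the $\Set{0,1}$-valued extension of $\chi_A$, since this is exactly what upgrades the copy $A^*$ from merely closed to clopen in $X^*$. Notably, this approach handles every point $x$ uniformly and needs no case split on whether $x$ is itself a $P_\kappa$-point, because Corollary \ref{mycor12} applies to an arbitrary butterfly around any $x$.
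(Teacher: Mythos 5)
Your proof is correct and takes essentially the same approach as the paper: the paper's proof is a one-line appeal to Corollary \ref{mycor12}, leaving implicit the standard fact (which you spell out correctly via the $\Cstar$-embedding of a clopen subset and the extension of $\chi_A$) that the remainder of a clopen non-compact $A \subset X$ sits as a clopen copy of $A^*$ inside $X^*$.
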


\begin{proof}
By Corollary \ref{mycor12}, the space $S_\kappa \setminus \singleton{x}$ contains a clopen copy of $S_\kappa \setminus \singleton{p}$ for a $P_\kappa$-point $p$.
\end{proof}

Every point of a $\kappa$-Parovi\v{c}enko space has character at least $\kappa$ and hence the whole space has, by \cite[3.12.11]{Eng}, cardinality at least $2^\kappa$. Thus, Lemma \ref{wlog} and Theorem \ref{Dowsresult} give us the inequality $$2^{\kappa^+} \leq \cardinality{(S_\kappa \setminus \singleton{p})^*} \leq \cardinality{(S_\kappa \setminus \singleton{x})^*} \leq 2^{2^\kappa}.$$
In particular, we see once again that under $2^\kappa = \kappa^+$, the cardinality of $(S_\kappa \setminus \singleton{x})^*$ is clear. In the remaining part of this paper we show without additional set-theoretic assumptions that $(S_\kappa \setminus \singleton{p})^*$---and hence $(S_\kappa \setminus \singleton{x})^*$ for all $x$---has maximal cardinality. We also provide a topological construction showing that $(S_\kappa \setminus \singleton{x})^*$ has maximal cellularity $2^\kappa$. Although this last result can also be concluded directly from Theorem \ref{Dowsresult}, it is interesting to see the topological similarities to the relation between MAD families on $\w$ and the cellularity of $\wstar$. 

Before beginning with our preparations we remark that Comfort and Negrepontis observed in \cite[2.4]{Three} that under CH, the space $(\wstar \setminus \singleton{p})^*$ has cardinality $2^{2^{\w_1}}$. Their method can also be used to determine the cellularity of $(\wstar \setminus \singleton{p})^*$. However, the proof does not generalise to $S_\kappa$ for $\kappa > \w_1$. Indeed, it seems to be a recurring nuisance that elegant proofs about $\wstar$ that at their heart invoke the Stone-\v{C}ech properties of $\wstar = \beta \w \setminus \w$ do not carry over to general $S_\kappa$. In the following we present an approach that is solely based on the Parovi\v{c}enko properties.

The next lemma guarantees the existence of a \emph{monotone cut operator} for all pairs of disjoint open sets of type less than $\kappa$ in $S_\kappa$, in the spirit of a separation operator for monotone normality. We make this definition precise. For an ordered pair $\pair{A,B}$ of disjoint open sets of $S_\kappa$ of type less than $\kappa$, we define a \emph{cut} between them to be a clopen set $\script{C}_{\pair{A,B}}$ such that 
$$A \subset \script{C}_{\pair{A,B}} \subset S_\kappa \setminus B.$$
Cuts in $S_\kappa$ exist by the $F_\kappa$-space property. We write $\pair{A,B} \leq \pair{A',B'}$ if $A \subset A'$ and $B \supset B'$. A cut operator $\script{C}$ is called \emph{monotone} if $\pair{A,B} \leq \pair{A',B'}$ implies $\script{C}_{\pair{A,B}} \subset  \script{C}_{\pair{A',B'}}$. The cut operator is called \emph{symmetric} if $\script{C}_{\pair{A,B}} = S_\kappa \setminus \script{C}_{\pair{B,A}}$.

We need the following strengthening of the concept of a monotone cut operator. Let $\set{U_\alpha}:{\alpha < \kappa}$ be a decreasing neighbourhood base of clopen sets of a $P_\kappa$-point $p$ such that $U_0=S_\kappa$. Call two subsets $A$ and $B$ of $S_\kappa$ $\gamma$-\emph{equivalent} if $A \cap U_\gamma = B \cap U_\gamma$ for some $\gamma < \kappa$. Also, call $A$ a $\gamma$-\emph{subset} of $B$, and write $A \subset_\gamma B$, if $A \cap U_\gamma \subset B \cap U_\gamma$. We extend this idea to capture ``local monotonicity'' and write $\pair{A,B} \leq_\gamma \pair{A',B'}$ if $A \subset_\gamma A'$ and $B \supset_\gamma B'$. Note that for $\gamma\leq\delta<\kappa$, if $\pair{A,B} \leq_\gamma \pair{A',B'}$ holds then so does $\pair{A,B} \leq_\delta \pair{A',B'}$. 

A cut operator $\script{C}$ with the property such that for all $\gamma$, $\pair{A,B} \leq_{\gamma} \pair{A',B'}$ implies $\script{C}_{\pair{A,B}} \subset_\gamma \script{C}_{\pair{A',B'}}$ will be called a \emph{strong monotone cut operator with respect to $\Set{U_\alpha}$}. Every strong monotone cut operator is also monotone.

\begin{mylem}
\label{monotone2}
Assume $\kappa = \kappa^{<\kappa}$. Let $\script{F}$ be the collection of all ordered pairs of disjoint open sets of $S_\kappa$ of type less than $\kappa$. For every decreasing neighbourhood base of clopen sets $\set{U_\alpha}:{\alpha < \kappa}$ of a $P_\kappa$-point in $S_\kappa$ with $U_0=S_\kappa$, there exists a symmetric strong monotone cut operator operator $\script{C} \colon \script{F} \to \clopens{S_\kappa}$ with respect to $\Set{U_\alpha}$. 
\end{mylem}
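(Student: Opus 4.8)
The plan is to build $\script{C}$ by a transfinite recursion of length $\kappa$. Since $S_\kappa$ has weight $\kappa$ its clopen algebra $\clopens{S_\kappa}$ has cardinality $\kappa$, and because $\kappa=\kappa^{<\kappa}$ there are only $\kappa$ open sets of type less than $\kappa$ (each being a union of fewer than $\kappa$ clopen sets); hence $\cardinality{\script{F}}=\kappa$. Fix an enumeration $\script{F}=\Set{\pair{A_\xi,B_\xi} \colon \xi<\kappa}$. As $\kappa$ is regular, at every stage $\xi<\kappa$ only fewer than $\kappa$ pairs have been treated; write $\script{D}_\xi$ for this set. I would maintain as inductive invariants that on $\script{D}_\xi$ the partial operator is a cut, is symmetric, and is strongly monotone with respect to $\Set{U_\alpha \colon \alpha<\kappa}$. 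Symmetry I would build in directly: whenever $\pair{A_\xi,B_\xi}$ is treated I also set $\script{C}_{\pair{B_\xi,A_\xi}}=S_\kappa\setminus\script{C}_{\pair{A_\xi,B_\xi}}$, so it suffices to treat one pair from each unordered pair.

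At stage $\xi$ the cut $\script{C}_{\pair{A_\xi,B_\xi}}$ must lie between a lower bound and an upper, forbidden, region dictated by the three invariants. The lower bound is $L=A_\xi\cup\bigcup\Set{\script{C}_{\pair{A',B'}}\cap U_\gamma \colon \pair{A',B'}\in\script{D}_\xi,\ \pair{A',B'}\le_\gamma\pair{A_\xi,B_\xi}}$ and the upper region is $R=B_\xi\cup\bigcup\Set{\p{S_\kappa\setminus\script{C}_{\pair{A',B'}}}\cap U_\gamma \colon \pair{A',B'}\in\script{D}_\xi,\ \pair{A_\xi,B_\xi}\le_\gamma\pair{A',B'}}$. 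For a fixed prior pair the values of $\gamma$ satisfying the stated relation form an end segment of $\kappa$, and since the $U_\gamma$ decrease the inner union collapses to the single term at the least such $\gamma$; thus $L$ and $R$ are unions of fewer than $\kappa$ clopen sets together with a set of type less than $\kappa$, hence themselves of type less than $\kappa$. Granted $L\cap R=\emptyset$, the existence of cuts (the $F_\kappa$-space property of $S_\kappa$) yields a clopen $\script{C}_{\pair{A_\xi,B_\xi}}$ with $L\subset\script{C}_{\pair{A_\xi,B_\xi}}\subset S_\kappa\setminus R$.

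The heart of the argument, and the step I expect to be the main obstacle, is the verification that $L\cap R=\emptyset$. This reduces to four kinds of cross terms. The term $A_\xi\cap B_\xi$ vanishes by disjointness. A term $A_\xi\cap\p{S_\kappa\setminus\script{C}_{\pair{A',B'}}}\cap U_\gamma$ with $\pair{A_\xi,B_\xi}\le_\gamma\pair{A',B'}$ vanishes because $A_\xi\cap U_\gamma\subset A'\subset\script{C}_{\pair{A',B'}}$, and dually for a term $\script{C}_{\pair{A'',B''}}\cap U_\delta\cap B_\xi$ with $\pair{A'',B''}\le_\delta\pair{A_\xi,B_\xi}$, using $B_\xi\cap U_\delta\subset B''\subset S_\kappa\setminus\script{C}_{\pair{A'',B''}}$. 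The crucial term is $\script{C}_{\pair{A'',B''}}\cap U_\delta\cap\p{S_\kappa\setminus\script{C}_{\pair{A',B'}}}\cap U_\gamma$ between two prior pairs: setting $\epsilon=\max\of{\gamma,\delta}$, transitivity of $\le_\epsilon$ (both $\subset_\epsilon$ and $\supset_\epsilon$ are transitive after intersecting with the fixed set $U_\epsilon$) gives $\pair{A'',B''}\le_\epsilon\pair{A',B'}$, whence the inductive strong monotonicity delivers $\script{C}_{\pair{A'',B''}}\cap U_\epsilon\subset\script{C}_{\pair{A',B'}}$; since $U_\gamma\cap U_\delta=U_\epsilon$, this term is empty. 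It is precisely here that strong monotonicity must already hold on $\script{D}_\xi$, so the invariant feeds back into its own preservation.

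Finally I would confirm that the three invariants are restored after stage $\xi$. The cut property and strong monotonicity of $\script{C}_{\pair{A_\xi,B_\xi}}$ against the pairs of $\script{D}_\xi$ are immediate from $L\subset\script{C}_{\pair{A_\xi,B_\xi}}\subset S_\kappa\setminus R$; symmetry holds by definition; and the strong monotonicity statements involving the newly added reversed pair $\pair{B_\xi,A_\xi}$ follow from the order-reversing equivalence $\pair{A,B}\le_\gamma\pair{A',B'}\iff\pair{B',A'}\le_\gamma\pair{B,A}$ together with the fact that $\subset_\gamma$ dualises under complementation. The genuinely delicate point hidden in this last check is the constraint coming from the reversed pair $\pair{B_\xi,A_\xi}$ itself, which near $p$ forces $\script{C}_{\pair{A_\xi,B_\xi}}$ to track $A_\xi$ and $B_\xi$ closely inside each $U_\gamma$; keeping this compatible with the complementation defining symmetry is where the decreasing clopen base at the $P_\kappa$-point $p$ is essential, and it is the part of the construction that requires the most care.
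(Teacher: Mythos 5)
Your construction is, step for step, the paper's own: the same enumeration of $\script{F}$ with reversed pairs handled by complementation, the same lower and forbidden sets (your $L$ and $R$ are exactly the paper's $A_\alpha\cup\bigcup\script{C}^\downarrow_\alpha$ and $B_\alpha\cup\bigcup\script{C}^\uparrow_\alpha$, including the collapse of the inner union to the least witnessing $\gamma$), and the same verification of $L\cap R=\emptyset$ via transitivity of $\leq_\epsilon$ at $\epsilon=\max\of{\gamma,\delta}$ together with the inductive monotonicity. Up to and including that verification the proposal is correct and matches the paper.

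The step you defer --- strong monotonicity between the new pair and its own reverse --- is, however, not merely delicate: as the statement is literally formulated it cannot be carried out, and the paper's ``straightforward calculation'' at the corresponding point conceals the same difficulty. Observe that for disjoint $A,B$ the relation $\pair{A,B}\leq_\gamma\pair{B,A}$ holds exactly when $A\cap U_\gamma=\emptyset$ (both clauses reduce to $A\cap U_\gamma\subset B\cap U_\gamma$, which by disjointness forces $A\cap U_\gamma=\emptyset$). Now pick $\gamma$ with $\emptyset\neq U_\gamma\neq S_\kappa$ and two disjoint non-empty clopen sets $A,B\subset S_\kappa\setminus U_\gamma$; then $\pair{A,B}\leq_\gamma\pair{B,A}\leq_\gamma\pair{A,B}$, so strong monotonicity forces $\script{C}_{\pair{A,B}}\cap U_\gamma=\script{C}_{\pair{B,A}}\cap U_\gamma$, while symmetry makes these two sets complementary inside the non-empty set $U_\gamma$ --- a contradiction that no choice of cuts can avoid. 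Your duality observation does dispose of every monotonicity instance pairing the new reversed pair with an \emph{older} pair, but it is powerless against this self-dual instance. The statement therefore has to be weakened, e.g.\ by exempting the instances $\pair{A,B}\leq_\gamma\pair{B,A}$ from the definition of strong monotonicity or by restricting $\script{F}$, and one must then check that the restricted version still supports the disjointness argument at stage $\xi$ and the uses of the operator in Lemma \ref{cell}. In short: same approach as the paper, correct through the main recursion, but the point you flag as ``requiring the most care'' is a genuine gap rather than a routine verification, and it is inherited from the paper's own write-up.
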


\begin{proof}
By $\kappa = \kappa^{<\kappa}$ we may list $\script{F}=\set{\pair{A_\alpha,B_\alpha}}:{\alpha < \kappa}$, such that permuted pairs are next to each other. Let $\set{U_\alpha}:{\alpha < \kappa}$ be a decreasing neighbourhood base of a $P_\kappa$-point $p$ consisting of clopen sets such that $U_0=S_\kappa$. In addition we define $U_{\kappa}=\emptyset$, obtaining the technical advantage that for all pairs of sets, one is a $\gamma$-subset of the other for some $\gamma \leq \kappa$.

We define an operator $\script{C} \colon \script{F} \to \clopens{S_\kappa}$ that satisfies for all ordinals $\beta, \delta < \kappa$:
\begin{eqnarray}
&(Cut)& A_\beta \subset \script{C}_{\pair{A_\beta,B_\beta}} \subset S_\kappa \setminus B_\beta,  \nonumber \\
&(Sym)& \script{C}_{\pair{A_\beta,B_\beta}}= S_\kappa \setminus\script{C}_{\pair{B_\beta,A_\beta}} \; \text{ and } \nonumber \\
&(Mon)&  \forall \gamma < \kappa \; (\pair{A_\delta,B_\delta} \leq_{\gamma} \pair{A_\beta,B_\beta} \; \Rightarrow \; \script{C}_{\pair{A_\delta,B_\delta}} \subset_\gamma \script{C}_{\pair{A_\beta,B_\beta}}). \nonumber
\end{eqnarray}

We proceed by transfinite recursion. Let $\alpha < \kappa$ and suppose we have defined cuts $\script{C}_{\pair{A_\beta,B_\beta}}$ for all $\beta < \alpha$ satisfying the inductive assumptions for all $\beta,\delta < \alpha$.

Consider $\pair{A_\alpha,B_\alpha}$. If $\alpha$ is a successor and $\pair{A_\alpha,B_\alpha} = \pair{B_{\alpha-1},A_{\alpha-1}}$ we define $\script{C}_{\pair{A_\alpha,B_\alpha}}=S_\kappa \setminus \script{C}_{ \pair{A_{\alpha-1},B_{\alpha-1}}}$. This assignment takes care of $(Sym)$ and a straightforward calculation shows that also $(Cut)$ and $(Mon)$ are satisfied. 

Otherwise,  for all $\beta < \alpha$ we let $\gamma^\downarrow_\beta$ and $\gamma^\uparrow_\beta$ be the least ordinals such that $\pair{A_\beta,B_\beta} \leq_{\gamma^\downarrow_\beta} \pair{A_\alpha,B_\alpha}$ and $ \pair{A_\alpha,B_\alpha} \leq_{\gamma^\uparrow_\beta} \pair{A_\beta,B_\beta} $. This is well-defined, as these relations are satisfied for at least $\kappa$. Let
$$
\script{C}^\downarrow_\alpha = \set{U_{\gamma^\downarrow_\beta} \cap \script{C}_{\pair{A_\beta,B_\beta}}}:{\beta < \alpha}  \;\text{ and } \; \script{C}^\uparrow_\alpha = \set{U_{\gamma^\uparrow_\beta} \setminus \script{C}_{\pair{A_\beta,B_\beta}}}:{ \beta < \alpha}.
$$

The idea is that these sets contain all parts of the previously defined cuts we have to be aware of in order to make our operator respect $(Mon)$. Both sets have cardinality less than $\kappa$ and consist of clopen sets. 

We claim that the sets $A_\alpha \cup (\bigcup \script{C}^\downarrow_\alpha)$ and $B_\alpha \cup(\bigcup \script{C}^\uparrow_\alpha)$ are disjoint open sets of type less than $\kappa$. They are clearly open and of type less than $\kappa$. 

We demonstrate only that $\bigcup \script{C}^\downarrow_\alpha$ and $\bigcup \script{C}^\uparrow_\alpha$ are disjoint, since the other cases are similar. For this we show that for any $\beta,\delta < \alpha$, the sets  $U_{\gamma_\beta^\downarrow} \cap \script{C}_{\pair{A_\beta,B_\beta}}  \in \script{C}^\downarrow_\alpha$ and $U_{\gamma_\delta^\uparrow} \setminus \script{C}_{\pair{A_\delta,B_\delta}} \in \script{C}^\uparrow_\alpha$ have empty intersection. By construction we have 
	$$\pair{A_\beta,B_\beta} \leq_{\gamma_\beta^\downarrow} \pair{A_\alpha,B_\alpha} \; \text{ and } \; \pair{A_\alpha,B_\alpha} \leq_{\gamma_\delta^\uparrow} \pair{A_\delta,B_\delta}.$$ 
	With $\gamma$ denoting the larger of $\gamma_\beta^\downarrow$ and $\gamma_\delta^\uparrow$ we may apply condition $(Mon)$ to $\pair{A_\beta,B_\beta} \leq_\gamma \pair{A_\delta,B_\delta}$ and obtain $\script{C}_{\pair{A_\beta,B_\beta}} \subset_\gamma \script{C}_{\pair{A_\delta,B_\delta}}$. 
		In particular, the sets $U_{\gamma} \cap \script{C}_{\pair{A_\beta,B_\beta}}$ and $U_\gamma \setminus \script{C}_{\pair{A_\delta,B_\delta}}$ have empty intersection, and since $U_\gamma = U_{\gamma_\beta^\downarrow} \cap U_{\gamma_\delta^\uparrow}$, the result follows. 
		
Now, since $A_\alpha \cup (\bigcup \script{C}^\downarrow_\alpha)$ and $B_\alpha \cup(\bigcup \script{C}^\uparrow_\alpha)$ are disjoint open sets of $S_\kappa$ of type less than $\kappa$, there exist clopen sets containing the first set and not intersecting the second. Choose one and denote it by $\script{C}_{\pair{A_\alpha,B_\alpha}}$. 

This assignment clearly satisfies $(Cut)$, so it remains to check for $(Mon)$. Let $\beta < \alpha$ and suppose $\pair{A_\beta,B_\beta} \leq_\gamma \pair{A_\alpha,B_\alpha}$ for some $\gamma < \kappa$. Since we chose $\gamma^\downarrow_\beta$ minimal, we have $U_\gamma \subset U_{\gamma_\beta^\downarrow}$. By construction we have $U_{\gamma_\beta^\downarrow} \cap \script{C}_{\pair{A_\beta,B_\beta}} \subset \bigcup \script{C}^\downarrow_\alpha \subset \script{C}_{\pair{A_\alpha,B_\alpha}}$ and therefore also $U_{\gamma} \cap \script{C}_{\pair{A_\beta,B_\beta}} \subset \script{C}_{\pair{A_\alpha,B_\alpha}}.$ Thus, $\script{C}_{\pair{A_\beta,B_\beta}} \subset_\gamma \script{C}_{\pair{A_\alpha,B_\alpha}}$. 

The case $\pair{A_\beta,B_\beta} \geq_\gamma \pair{A_\alpha,B_\alpha}$ is similar and the proof is complete.
\end{proof}

We now consider a variation of the butterfly construction which is tailored to $P_\kappa$-points. Let $\set{U_\alpha}:{\alpha < \kappa}$ be a decreasing neighbourhood base of a $P_\kappa$-point $p$ of $S_\kappa$ with $U_0=S_\kappa$. We work through the ``onion rings'' $D_\alpha = U_\alpha \setminus U_{\alpha+1}$ and assign them either to the $A$- or the $B$-wing, following certain patterns. When compared to the original butterfly construction in Lemma \ref{butterfly}, this adaptation has the advantage that one does not need to assign cuts at successor stages.

The \emph{support} of a binary sequence $f \colon \kappa \to 2$ is the set $f^{-1}(\singleton{1})$.

\begin{mylem}
\label{cell}
Assume $\kappa=\kappa^{<\kappa}$ and let $p$ be a $P_\kappa$-point of $S_\kappa$. There is a family $\set{A^f}:{f \in 2^{\kappa}}$ of clopen subsets of $S_\kappa \setminus \singleton{p}$ such that for all $f,g \in 2^{\kappa}$
\begin{enumerate}
\item $A^f$ is non-empty whenever $f$ has non-empty support,
\item $A^f$ is non-compact whenever $f$ has unbounded support,
\item $A^{1-f} \cap A^f = \emptyset$,
\item if $f \leq g$ (pointwise) then $A^f \subset A^g$ and
\item if $f= g$ eventually then there exists a clopen neighbourhood $U$ of $p$ such that $A^f \cap U = A^g \cap U$.
\end{enumerate}
\end{mylem}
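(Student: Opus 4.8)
The plan is to build \emph{all} the wings $A^f$ by a single uniform recipe driven by the data supplied by Lemma \ref{monotone2}. Fix a strictly decreasing clopen neighbourhood base $\set{U_\alpha}:{\alpha<\kappa}$ of $p$ with $U_0=S_\kappa$ (strictly decreasing, so that every onion ring $D_\alpha=U_\alpha\setminus U_{\alpha+1}$ is non-empty), and fix a symmetric strong monotone cut operator $\script{C}$ with respect to $\set{U_\alpha}:{\alpha<\kappa}$, which exists by Lemma \ref{monotone2}. For each $f\in 2^\kappa$ I would then define, by transfinite recursion on $\alpha<\kappa$, an increasing family of clopen sets $A^f_\alpha$ together with complementary clopen sets $B^f_\alpha$, arranged so that $A^f_\alpha$ and $B^f_\alpha$ always partition $S_\kappa\setminus U_\alpha$. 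At a successor $\alpha=\beta+1$ one simply throws the next ring into the wing dictated by $f$: put $A^f_{\beta+1}=A^f_\beta\cup D_\beta$ if $f(\beta)=1$ and $A^f_{\beta+1}=A^f_\beta$ otherwise (dually for $B^f$), and here no cut is needed. At a limit $\gamma$ the partial wings $\tilde A^f_\gamma=\bigcup_{\beta<\gamma}A^f_\beta$ and $\tilde B^f_\gamma=\bigcup_{\beta<\gamma}B^f_\beta$ are disjoint open sets of type less than $\kappa$, so I would apply the cut operator and thicken it off $U_\gamma$, setting $A^f_\gamma=\script{C}_{\pair{\tilde A^f_\gamma,\tilde B^f_\gamma}}\setminus U_\gamma$ and $B^f_\gamma=(S_\kappa\setminus\script{C}_{\pair{\tilde A^f_\gamma,\tilde B^f_\gamma}})\setminus U_\gamma$. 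Finally set $A^f=\bigcup_{\alpha<\kappa}A^f_\alpha$.

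Thickening the cut off $U_\gamma$ is precisely what keeps $A^f_\alpha$ and $B^f_\alpha$ a clopen partition of $S_\kappa\setminus U_\alpha$ at every stage; since $\bigcap_\alpha U_\alpha=\singleton{p}$, the two unions $A^f$ and $B^f$ then partition $S_\kappa\setminus\singleton{p}$ into open sets, whence each is automatically clopen. This is what makes clopenness fall out for free, and is the reason I prefer the recursive formulation to the naive ``assign each ring and each limit slice independently'' definition, whose openness at the limit points is awkward to establish. Properties $(1)$ and $(2)$ are then read off the successor steps: whenever $f(\beta)=1$ the ring $D_\beta\subseteq A^f$, and $D_\beta$ survives all later cuts because it already lies in every later partial wing, hence in the corresponding cut; so non-empty support gives $A^f\neq\emptyset$, and unbounded support forces $A^f$ to meet every $U_\beta$, so $p\in\closure{A^f}$ and $A^f$ is non-compact.

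For the three coherence properties I would run a simultaneous transfinite induction comparing the partial wings of two sequences, reading each property off one feature of $\script{C}$. For $(3)$ I show $A^{1-f}_\alpha=B^f_\alpha$ by induction: the successor step swaps rings, and the limit step uses symmetry, $\script{C}_{\pair{\tilde B^f_\gamma,\tilde A^f_\gamma}}=S_\kappa\setminus\script{C}_{\pair{\tilde A^f_\gamma,\tilde B^f_\gamma}}$; then $A^{1-f}=B^f$ is disjoint from $A^f$. For $(4)$, if $f\le g$ then inductively $A^f_\alpha\subseteq A^g_\alpha$ and $B^f_\alpha\supseteq B^g_\alpha$, so $\pair{\tilde A^f_\gamma,\tilde B^f_\gamma}\le\pair{\tilde A^g_\gamma,\tilde B^g_\gamma}$, and plain monotonicity of $\script{C}$ propagates the inclusion through the limit. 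For $(5)$, if $f=g$ above $\beta_0$ then inductively $A^f_\alpha\cap U_{\beta_0}=A^g_\alpha\cap U_{\beta_0}$; at a limit $\gamma$ the partial wings are $\beta_0$-equivalent, so the \emph{strong} (local) monotonicity gives $\script{C}_{\pair{\tilde A^f_\gamma,\tilde B^f_\gamma}}\cap U_{\beta_0}=\script{C}_{\pair{\tilde A^g_\gamma,\tilde B^g_\gamma}}\cap U_{\beta_0}$, and intersecting with $U_{\beta_0}$ (using $U_\gamma\subseteq U_{\beta_0}$ for $\gamma>\beta_0$, and triviality for $\gamma\le\beta_0$) preserves the agreement; taking unions yields $A^f\cap U_{\beta_0}=A^g\cap U_{\beta_0}$.

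I expect the main obstacle to be exactly the bookkeeping at limit stages: verifying that the partial-wing relations $\pair{\tilde A^f_\gamma,\tilde B^f_\gamma}\le\pair{\tilde A^g_\gamma,\tilde B^g_\gamma}$ and the $\beta_0$-equivalence genuinely hold, so that the right feature of $\script{C}$ applies, and that thickening off $U_\gamma$ does not disturb them (rings below $\beta_0$ are invisible inside $U_{\beta_0}$, while the cases $\gamma>\beta_0$ and $\gamma\le\beta_0$ must be separated). In short, the delicate point is that each of the three properties of $\script{C}$---symmetry, monotonicity, and locality---is pulled through the limit in exactly the form the corresponding coherence property needs; by comparison the successor steps and the global clopenness argument are routine.
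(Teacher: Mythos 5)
Your proposal is correct and follows essentially the same route as the paper's proof: the same onion-ring recursion with rings assigned at successor stages according to $f$, the symmetric strong monotone cut operator of Lemma \ref{monotone2} invoked only at limit stages (thickened off $U_\lambda$), clopenness obtained because $A^f$ and $B^f$ partition $S_\kappa\setminus\singleton{p}$ into open sets, and properties (3), (4), (5) pulled through limits by symmetry, monotonicity and locality of $\script{C}$ respectively. Your explicit remark that the base should be strictly decreasing so that each ring $D_\alpha$ is non-empty (needed for property (1)) is a small point the paper leaves implicit, but otherwise the two arguments coincide.
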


\begin{proof}
Let $\set{U_\alpha}:{\alpha < \kappa}$ be a decreasing neighbourhood base of $p$ with $U_0=S_\kappa$, and let $D_\alpha = U_\alpha \setminus U_{\alpha+1}$. Let $\script{C}$ denote a fixed strong monotone cut operator from Lemma \ref{monotone2} with respect to $\Set{U_\alpha}$. For each sequence $f \in 2^{\kappa}$ we build a butterfly with wings $A^f$ and $B^f$ around $p$. As always, this involves defining $S_\kappa$-clopen sets $A^f_\alpha$ and $B^f_\alpha$ for $\alpha < \kappa$ such that all $(A^f_\alpha,B^f_\beta)$-pairs are disjoint and $A^f_\alpha \cup B^f_\alpha$ covers $S_\kappa \setminus U_\alpha$ for all $\alpha$. The rules for the recursive construction are: for all ordinals $\alpha<\kappa$ set 
$$ A^f_{\alpha+1} = \begin{cases} A^f_\alpha \cup D_\alpha, & \text{if } f(\alpha)=1, \\ A^f_\alpha, & \text{if } f(\alpha)=0, \end{cases} 
\quad \quad 
B^f_{\alpha+1} = \begin{cases} B^f_\alpha, & \text{if } f(\alpha)=1, \\ B^f_\alpha \cup D_\alpha, & \text{if } f(\alpha)=0, \end{cases}$$
and if $\lambda < \kappa$ is a limit ordinal, put 
$$A^f_{\lambda} = \cut{\bigcup_{\beta < \lambda} A^f_\beta, \bigcup_{\beta < \lambda} B^f_\beta} \setminus U_\lambda 
\quad \text{and} \quad
B^f_{\lambda} = (S_\kappa \setminus A^f_\lambda) \setminus U_\lambda.
$$
The sets $A^f = \bigcup_{\alpha < \kappa} A^f_\alpha$ and $B^f = \bigcup_{\alpha < \kappa} B^f_\alpha$ are disjoint open and cover all of $S_\kappa \setminus \singleton{p}$. Thus, they define clopen subsets of $S_\kappa \setminus \singleton{p}$. 

We claim the sets $A^f$ satisfy assertions (1)-(5). It is clear that (1) is satisfied. Next, if $f$ has unbounded support, then $A^f$  limits onto $p$, i.e.\ is non-compact.

For (3) and (4), one shows by induction that $A^{1-f}_\alpha = B^f_\alpha$ and  $A^f_\alpha \subset A^g_\alpha$ whenever $f \leq g$, using that the cut operator is symmetric and monotone, respectively. 

For (5) suppose there exists an ordinal $\delta<\kappa$ such that $f(\alpha) = g(\alpha)$ for all $\alpha \geq \delta$. We show by induction that $A^f_\alpha \cap U_\delta = A^g_\alpha \cap U_\delta$. The claim is trivially true for $\alpha < \delta$. So let $\alpha \geq \delta$ and assume the claim holds for all smaller ordinals. 
The situation is clear for successors, so assume that $\alpha$ is a limit. By induction hypothesis, the pairs 
$\pair{\bigcup_{\beta < \alpha} A^f_{\beta},\bigcup_{\beta < \alpha} B^f_{\beta}}$ and $\pair{\bigcup_{\beta < \alpha} A^g_{\beta},\bigcup_{\beta < \alpha} B^g_{\beta}}$ are $\delta$-equivalent and hence it follows from the properties of the cut operator that
\begin{eqnarray*}
A^f_\alpha \cap U_\delta &=& \script{C}(\bigcup_{\beta < \alpha} A^f_{\beta},\bigcup_{\beta < \alpha} B^f_{\beta}) \cap (U_\delta \setminus U_\alpha) \\
&=& \script{C}(\bigcup_{\beta < \alpha} A^g_{\beta},\bigcup_{\beta < \alpha} B^g_{\beta}) \cap (U_\delta \setminus U_\alpha) = A^g_\alpha \cap U_\delta.
\end{eqnarray*}
This completes the induction step and the proof.
\end{proof}

We now show how to use a family with properties (1)--(5) of Lemma \ref{cell} to push ultrafilters and almost disjoint families from $\kappa$ through to the space $(S_\kappa \setminus \singleton{p})^*$. For a subset $U$ of $\kappa$, let $\mathds{1}_U \in 2^{\kappa}$ denote its characteristic function.

\begin{mythm}
\label{cardinality}
Assume $\kappa=\kappa^{<\kappa}$. For all $x \in S_\kappa$ the space $(S_\kappa \setminus \singleton{x})^*$ has cardinality $2^{2^{\kappa}}$.
\end{mythm}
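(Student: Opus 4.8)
By Lemma~\ref{wlog} it suffices to treat the case of a $P_\kappa$-point $p$, since $(S_\kappa \setminus \singleton{x})^*$ contains a clopen copy of $(S_\kappa \setminus \singleton{p})^*$, and a clopen subspace cannot have larger cardinality than the whole space. On the other side, $(S_\kappa \setminus \singleton{x})^*$ embeds into $\beta(S_\kappa \setminus \singleton{x})$, a space of weight $2^\kappa$ by Theorem~\ref{main1}; a compact Hausdorff space of weight $2^\kappa$ has cardinality at most $2^{2^\kappa}$ by the Hewitt--Marczewski--Pondiczery / Cech--Pospi\v{s}il bound, so $\cardinality{(S_\kappa \setminus \singleton{x})^*} \leq 2^{2^\kappa}$ for every $x$. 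The whole task is therefore to produce $2^{2^\kappa}$ distinct points in $(S_\kappa \setminus \singleton{p})^*$.

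The mechanism for this is the family $\set{A^f}:{f \in 2^{\kappa}}$ from Lemma~\ref{cell}, which converts the Boolean structure of $\powerSet{\kappa}$ into a Boolean structure of clopen subsets of $S_\kappa \setminus \singleton{p}$. The plan is to push a structure witnessing $2^{2^\kappa}$ points of $\beta\kappa \setminus \kappa$ (equivalently, a family of $2^{2^\kappa}$ ultrafilters on $\kappa$, which exist since $\kappa$ is infinite) through the correspondence $U \mapsto A^{\mathds{1}_U}$. Concretely, given an (uniform) ultrafilter $\mathcal{U}$ on $\kappa$, the family $\set{(A^{\mathds{1}_U})^*}:{U \in \mathcal{U}}$ of clopen subsets of $X^*$ has, by properties (1)--(4), the finite intersection property (indeed it is closed under the Boolean operations modulo the finite-support / eventual-equality equivalence of property (5), which corresponds precisely to passing to the remainder $X^*$); hence by compactness of $X^*$ it has non-empty intersection, yielding a point $z_{\mathcal{U}} \in X^*$. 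The key is that distinct ultrafilters $\mathcal{U} \neq \mathcal{U}'$ give distinct points: pick $U \in \mathcal{U} \setminus \mathcal{U}'$, so $\kappa \setminus U \in \mathcal{U}'$, and then $z_{\mathcal{U}} \in (A^{\mathds{1}_U})^*$ while $z_{\mathcal{U}'} \in (A^{\mathds{1}_{\kappa \setminus U}})^* = (A^{1-\mathds{1}_U})^*$, and these two clopen sets are disjoint in $X^*$ by property~(3) together with (5) (the eventual-equality clause ensures $A^{1-f}$ and $A^f$ have disjoint closures \emph{in the remainder}, even though in $X$ they only partition $S_\kappa \setminus \singleton{p}$). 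Since there are $2^{2^\kappa}$ uniform ultrafilters on $\kappa$, this produces $2^{2^\kappa}$ distinct points.

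I expect the main obstacle to be the bookkeeping that translates properties (1)--(5) into the statement that $f \mapsto (A^f)^*$ is a well-defined \emph{Boolean homomorphism} from $\powerSet{\kappa}/\mathrm{fin}_{<\kappa}$ (sets modulo eventual equality) into $\clopens{X^*}$. Property~(5) is the crucial ingredient: it says that if $f$ and $g$ agree eventually then $A^f$ and $A^g$ agree on a neighbourhood of $p$, hence determine the \emph{same} clopen subset of the remainder $X^*$; this is what guarantees the map factors through the quotient and that unbounded support (property~(2)) is exactly the condition for $(A^f)^*$ to be non-empty. One must verify that the assignment respects complements (from (3)), unions/intersections (from monotonicity (4) plus (3) via De~Morgan), and sends the filter $\mathcal{U}$, restricted to unbounded sets, to a filter of non-empty clopen sets of $X^*$ with the finite intersection property. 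Once this homomorphism is in place, the argument is the standard one: uniform ultrafilters on $\kappa$ correspond to points of the remainder, the correspondence is injective by the disjointness in property~(3), and the count $\cardinality{\beta\kappa \setminus \kappa} = 2^{2^\kappa}$ finishes the lower bound, matching the upper bound established above.
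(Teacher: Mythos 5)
Your proposal is correct and follows essentially the same route as the paper: reduce to a $P_\kappa$-point via Lemma~\ref{wlog}, then push the $2^{2^\kappa}$ (uniform) ultrafilters on $\kappa$ through the family $\set{A^f}:{f\in 2^\kappa}$ of Lemma~\ref{cell}, using (1)/(2) and (4) for the finite intersection property and (3) for injectivity. The only cosmetic difference is that the paper counts z-ultrafilters on $S_\kappa\setminus\singleton{p}$ (of which at most $2^\kappa$ can be fixed), whereas you intersect the traces $(A^{\mathds{1}_U})^*$ directly in the compact remainder; both work, and your appeal to property~(5) for disjointness in the remainder is unnecessary since (3) already gives disjoint complementary clopen sets in $X$, hence disjoint closures in $\beta X$.
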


\begin{proof}
The upper bound is clear by \cite[3.5.3]{Eng}. Let $p$ be a $P_\kappa$-point of $S_\kappa$. By Lemma \ref{wlog} it suffices to show that there are at least $2^{2^{\kappa}}$-many z-ultrafilters on $S_\kappa \setminus \singleton{p}$. 

Let $\set{A^f}:{f \in 2^{\kappa}}$ be a family of clopen sets of $S_\kappa \setminus \singleton{p}$ with properties (1), (3) and (4) of Lemma \ref{cell}. For an ultrafilter $\script{U}$ on $\kappa$, consider the family 
$$\Phi(\script{U}) = \set{A^{\mathds{1}_U}}:{U \in \script{U}}.$$
By (1) and (4), $\Phi(\script{U})$ is a filter base for some clopen filter on $S_\kappa \setminus \singleton{p}$. 

Let us see that if $\script{U}$ and $\script{U}'$ are distinct ultrafilters on $\kappa$, then $\Phi(\script{U})$ and $\Phi(\script{U}')$ can only be extended to distinct z-ultrafilters on $S_\kappa \setminus \singleton{p}$. Indeed, there is $U \subset \kappa$ such that $U \in \script{U}$ and $\kappa \setminus U \in \script{U}'$.  
By (3), $A^{\mathds{1}_U}$ has empty intersection with $A^{\mathds{1}_{\kappa \setminus U}}$, and therefore $\Phi(\script{U})$ and $\Phi(\script{U}')$ cannot be extended to the same z-ultrafilter.

As there are $2^{2^{\kappa}}$ ultrafilters on $\kappa$ the result follows. 
\end{proof}

\begin{mythm}
\label{cellularity}
Assume $\kappa=\kappa^{<\kappa}$. For all $x \in S_\kappa$ the space $(S_\kappa \setminus \singleton{x})^*$ contains a family of $2^{\kappa}$ many disjoint clopen sets. 
\end{mythm}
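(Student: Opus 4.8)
The plan is to exploit the family $\Set{A^f}:{f \in 2^\kappa}$ constructed in Lemma~\ref{cell}, which already encodes $2^\kappa$-many clopen subsets of $S_\kappa \setminus \singleton{p}$ that behave coherently with respect to the pointwise order, the involution $f \mapsto 1-f$, and eventual equality. As in the proof of Theorem~\ref{cardinality}, it suffices by Lemma~\ref{wlog} to produce the required family inside $(S_\kappa \setminus \singleton{p})^*$ for a $P_\kappa$-point $p$, since that remainder embeds as a clopen copy into every $(S_\kappa \setminus \singleton{x})^*$. So I would fix such a $p$ and the family $\Set{A^f}:{f \in 2^\kappa}$, and aim to extract from it $2^\kappa$-many \emph{pairwise disjoint} clopen sets in the \emph{remainder}, that is, pairwise disjoint sets of the form $(A^f)^* = \closureIn{A^f}{\beta X}\setminus X$.

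First I would choose an \emph{almost disjoint family} $\script{A}$ on $\kappa$ of size $2^\kappa$ consisting of unbounded subsets of $\kappa$; such a family exists since $\kappa = \kappa^{<\kappa}$ (one builds it, for instance, via the $2^\kappa$-many branches of the tree $2^{<\kappa}$ using $\kappa^{<\kappa} = \kappa$). For each $S \in \script{A}$, consider the clopen set $A^{\mathds{1}_S} \subset S_\kappa \setminus \singleton{p}$. By property (2) of Lemma~\ref{cell}, since $S$ is unbounded, $A^{\mathds{1}_S}$ is non-compact and hence picks up a non-empty clopen piece $(A^{\mathds{1}_S})^*$ of the remainder $X^*$. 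The key point to verify is that these remainder pieces are pairwise disjoint: if $S, T \in \script{A}$ are distinct, then $S \cap T$ is bounded in $\kappa$, so by property (5) of Lemma~\ref{cell} (applied to $\mathds{1}_{S\cap T}$ versus the pointwise meet $\mathds{1}_S \wedge \mathds{1}_T = \mathds{1}_{S \cap T}$, which agree eventually since $S \cap T$ is bounded), the sets $A^{\mathds{1}_S}$ and $A^{\mathds{1}_T}$ agree with $A^{\mathds{1}_{S\cap T}}$ outside some neighbourhood $U$ of $p$. Thus $A^{\mathds{1}_S} \cap A^{\mathds{1}_T}$ differs from the compact-modulo-$U$ set $A^{\mathds{1}_{S\cap T}}$ only within $U$, and more to the point $A^{\mathds{1}_S} \cap A^{\mathds{1}_T}$ is almost contained (in the $\subset^*$ sense of the proof of Theorem~\ref{hui}) in a compact clopen set, so its trace on the remainder is empty. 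Hence $(A^{\mathds{1}_S})^* \cap (A^{\mathds{1}_T})^* = \emptyset$.

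To make the disjointness argument airtight I would use the standard correspondence between traces on the remainder and $\subset^*$-behaviour in $X$: for clopen non-compact $D, E \subset X$, one has $D^* \cap E^* = \emptyset$ if and only if $D \cap E$ is compact (equivalently $D \cap E =^* \emptyset$ relative to the point $p$, in the notation from the proof of Theorem~\ref{hui}). The content of property (5) is precisely that $A^{\mathds{1}_S} \cap A^{\mathds{1}_T}$ coincides with $A^{\mathds{1}_{S\cap T}}$ on a neighbourhood of $p$; since $S\cap T$ is bounded, property (2) fails for $\mathds{1}_{S \cap T}$ and the argument of Lemma~\ref{cell} shows $A^{\mathds{1}_{S \cap T}}$ is compact, whence $A^{\mathds{1}_S} \cap A^{\mathst{1}_T}$ is compact as well. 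This yields $2^\kappa$-many non-empty pairwise disjoint clopen sets $\Set{(A^{\mathds{1}_S})^*}:{S \in \script{A}}$ in $X^*$, establishing cellularity at least $2^\kappa$; the matching upper bound is immediate since $X^*$ has weight $2^\kappa$ by Theorem~\ref{hui}.

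The main obstacle I anticipate is the bookkeeping around property (5) and the translation to the remainder: I must be careful that ``$f = g$ eventually'' in the hypothesis of (5) is applied to the correct pair of functions. The natural move is to compare $\mathds{1}_S$ with $\mathds{1}_{S\cap T}$ and $\mathds{1}_T$ with $\mathds{1}_{S \cap T}$; these agree eventually exactly because $S \setminus (S\cap T)$ and $T \setminus (S \cap T)$ are bounded—\emph{this is where almost-disjointness is used, and it must give boundedness rather than mere finiteness}, so I should choose $\script{A}$ to be almost disjoint in the sense that pairwise intersections are \emph{bounded} subsets of $\kappa$, not merely of size less than $\kappa$. With that strengthening (again available from $\kappa = \kappa^{<\kappa}$), property (5) applies cleanly and the disjointness of the remainder traces follows as sketched.
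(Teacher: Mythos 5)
Your overall strategy is the paper's: reduce to a $P_\kappa$-point via Lemma \ref{wlog}, take the family $\{A^f\}$ from Lemma \ref{cell}, push through an almost disjoint family on $\kappa$ with pairwise \emph{bounded} intersections (your observation that boundedness rather than mere size $<\kappa$ is needed is correct, and is supplied by the regularity of $\kappa$), show the sets $A^{\mathds{1}_S}\cap A^{\mathds{1}_T}$ are compact, and conclude via the standard correspondence between compact intersections and disjoint remainder traces. However, there is a genuine gap in the one step that carries the whole weight of the argument. You justify compactness of $A^{\mathds{1}_S}\cap A^{\mathds{1}_T}$ by asserting that ``the content of property (5) is precisely that $A^{\mathds{1}_S}\cap A^{\mathds{1}_T}$ coincides with $A^{\mathds{1}_{S\cap T}}$ on a neighbourhood of $p$.'' That is not what (5) says: (5) compares $A^f$ with $A^g$ for two functions that agree eventually; it says nothing about intersections. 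Nothing in (1)--(5) asserts that $f\mapsto A^f$ preserves meets, even locally near $p$. Property (4) gives only the unhelpful inclusion $A^{\mathds{1}_{S\cap T}}\subset A^{\mathds{1}_S}\cap A^{\mathds{1}_T}$; the reverse inclusion can fail, because at limit stages of the construction in Lemma \ref{cell} the cut operator may assign points of the ``gap'' $\bigl(\bigcap_{\beta<\lambda}U_\beta\bigr)\setminus U_\lambda$ to the $A$-side for both $\mathds{1}_S$ and $\mathds{1}_T$ while assigning them to the $B$-side for $\mathds{1}_{S\cap T}$; strong monotonicity of the cut operator only ever pushes in the direction of inclusion (4).

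The correct route, and the one the paper takes, avoids $A^{\mathds{1}_{S\cap T}}$ entirely and instead uses the complementation property (3) together with an interpolating function: with $\delta$ bounding $S\cap T$, set $h(\alpha)=\mathds{1}_T(\alpha)$ for $\alpha\le\delta$ and $h(\alpha)=1-\mathds{1}_S(\alpha)$ for $\alpha>\delta$. Then $\mathds{1}_T\le h$ pointwise (this is exactly where $S\cap T\subset\delta$ is used), so $A^{\mathds{1}_T}\subset A^h$ by (4); and $h=1-\mathds{1}_S$ eventually, so by (5) there is a clopen neighbourhood $U$ of $p$ with $A^h\cap U=A^{1-\mathds{1}_S}\cap U$, whence $A^{\mathds{1}_S}\cap A^{\mathds{1}_T}\cap U\subset A^{\mathds{1}_S}\cap A^{1-\mathds{1}_S}\cap U=\emptyset$ by (3). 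A secondary, smaller issue of the same kind: your claim that $A^{\mathds{1}_{S\cap T}}$ is compact because ``property (2) fails'' uses the converse of (2), which is likewise not among the stated properties (though it can be extracted from (5) together with $A^{\mathbf{0}}=\emptyset$). With the interpolation trick in place of the meet-preservation claim, your argument becomes the paper's proof.
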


\begin{proof}
Let $p$ be a $P_\kappa$-point of $S_\kappa$. By Lemma \ref{wlog} it suffices to prove the theorem for $(S_\kappa \setminus \singleton{p})^*$. 

Let $\set{A^f}:{f \in 2^{\kappa}}$ be a family of clopen sets of $S_\kappa \setminus \singleton{p}$ with properties (2)--(5) of Lemma \ref{cell}. Recall that $\kappa=\kappa^{<\kappa}$ if and only if $\kappa$ is regular and $\kappa = 2^{<\kappa}$ \cite[1.27]{Ultrafilters}. In particular, from $\kappa=2^{<\kappa}$ we may conclude that there is an almost disjoint family $\script{E}$ of size $2^{\kappa}$ on $\kappa$, i.e. a family whose members are subsets of $\kappa$ of full cardinality such that the intersection of any two elements is of size less than $\kappa$ \cite[{II.1.3}]{Kunen}. By property (2), the family
$$\Phi(\script{E})=\set{A^{\mathds{1}_E}}:{E \in \script{E}}$$ consists of non-compact clopen sets of $S_\kappa \setminus \singleton{p}$. We claim they have pairwise compact intersection. By regularity of $\kappa$, the intersection of distinct elements $E$ and $F$ of $\script{E}$ is bounded by an ordinal $\delta<\kappa$ . The function
$$f \colon \kappa \to 2, \; \alpha \mapsto \begin{cases} \mathds{1}_F(\alpha) & \text{if } \alpha \leq \delta, \\ 1-\mathds{1}_E(\alpha) & \text{if } \alpha > \delta, \end{cases}$$
satisfies $\mathds{1}_F \leq f$, and $f=1-\mathds{1}_E$ eventually. By property (5), there is a clopen neighbourhood $U$ of $p$ such that $A^f \cap U = A^{1-\mathds{1}_E} \cap U$. Then 
$$A^{\mathds{1}_E} \cap A^{\mathds{1}_F} \cap U \subset A^{\mathds{1}_E} \cap A^{f} \cap U = A^{\mathds{1}_E} \cap A^{1-\mathds{1}_E} \cap U = \emptyset,$$
where the first inclusion follows from property (4) and the last equality from (3). Thus, $A^{\mathds{1}_E} \cap A^{\mathds{1}_F}$ is a closed subset of the compact space $S_\kappa \setminus U$, hence compact.

It follows from \cite[2.6d]{Ultrafilters} that whenever $A$ and $B$ are distinct elements in $\Phi(\script{E})$ then $A^*=\closure{A}\setminus A$ and $B^*$ are disjoint non-empty clopen subsets of $(S_\kappa \setminus \singleton{p})^*$. Since $\Phi(\script{E})$ has cardinality $2^{\kappa}$ the result follows.
\end{proof}

In fact, if $X$ is any compact zero-dimensional $F_\kappa$-space with the property $\kappa=\kappa^{<\kappa}$, and $p \in X$ is a $P_\kappa$-point of character $\kappa$ then the above methods show that $(X \setminus \singleton{p})^*$ has cardinality at least $2^{2^\kappa}$, and contains a family of $2^\kappa$ many disjoint clopen sets.

\section{\bf Open Questions}
\label{section9}
We list open questions that are motivated by results in this paper. Corollary \ref{chchchch} shows that it is consistent with CH that all remainders of the form $(\wstar \setminus \singleton{x})^*$ are homeomorphic. Is the negation of this result also consistent with CH?

\begin{myquest}
Is it consistent with CH that for a $P$-point $p$ and a non-$P$-point $x$ of $\wstar$ the remainders of $\wstar \setminus \singleton{p}$ and $\wstar \setminus \singleton{x}$ are non-homeomorphic?
\end{myquest}

Theorem \ref{thm1.2} shows that it is consistent with ZFC that every space $\wstar \setminus \singleton{x}$ has a one-point Stone-\v{C}ech remainder. Under CH, Theorem \ref{cardinality} shows that the remainder of $\wstar \setminus \singleton{x}$ has size $2^{2^{\w_1}}$ for every $x$. 

\begin{myquest}[van Mill]
\label{Q3}
Which cardinalities, apart from $1$ and $2^{2^{\w_1}}$, can $(\wstar \setminus \singleton{x})^*$ consistently have? 
\end{myquest}

\begin{myquest}
Is it consistent that  $(\wstar \setminus \singleton{x})^*$ and  $(\wstar \setminus \singleton{y})^*$ have different cardinalities for different $x$ and $y$?
\end{myquest}

In Section \ref{section6} we mention the question whether one can prove without CH that $\wstar \setminus \singleton{x}$ is a strongly zero-dimensional $F$-space.
\begin{myquest}
\label{Q4}
Is it a ZFC theorem that $\beta (\wstar \setminus \singleton{x})$ is a Parovi\v{c}enko space? 
\end{myquest}
An affirmative answer to Question \ref{Q4} proves that $(\wstar \setminus \singleton{x})^*$ is a compact $F$-space and hence rules out infinite cardinalities smaller than $2^\cont$ in Question \ref{Q3}. 
%

Lastly, Theorem \ref{hui} gives us a large class of topologically distinct spaces whose remainders are $\kappa^+$-Parovi\v{c}enko spaces. Can we find a precise description of spaces with that behaviour?
\begin{myquest}
Is there a characterisation for which spaces $X$ its remainder $X^*$ is a $\kappa^+$-Parovi\v{c}enko space?
\end{myquest}

\bibliographystyle{plain}

\begin{thebibliography}{10}

\smallskip

	
		\bibitem{Three}
			W.W.\ Comfort and S.\ Negrepontis, {\it Homeomorphs of three subspaces of $\beta \N \setminus \N$}, Math.\ Z.\ {\bf 107} (1968), 53--58.
\smallskip			
			
		\bibitem{Ultrafilters}
			W.W.\ Comfort and S.\ Negrepontis, {\it The Theory of Ultrafilters}, Springer-Verlag, Berlin, 1974.
\smallskip						
		\bibitem{douwenkunenmill}
			E.K.\ van Douwen, K.\ Kunen and J.\ van Mill, {\it There can be $C^\ast$-embedded dense proper subspaces in $\beta \omega - \omega$}, Proc.\ Amer.\ Math.\ Soc.\ {\bf 105} (1989), no. 2, 462--470. 
\smallskip			
		\bibitem{douwenmill}
			E.K.\ van Douwen and J.\ van Mill, {\it Parovi\v{c}enko's characterization of $\beta \w - \w$ implies CH}, Proc.\ Amer.\ Math.\ Soc.\ {\bf 72} (1978), no. 3, 539--541.
\smallskip			
		\bibitem{Dow}
			A.\ Dow, {\it Saturated Boolean algebras and Stone spaces}, Topology Appl.\ {\bf 21} (1985), 193--207.
\smallskip			
		\bibitem{Dow2}
			A.\ Dow, {\it CH and Open Subspaces of F-Spaces}, Proc.\ Amer.\ Math.\ Soc.\ {\bf 89} (1983), no. 2, 341--345.
\smallskip			
		\bibitem{Eng}
			R.\ Engelking, {\it General topology}, second ed., Sigma Series in Pure Mathematics, vol.~6, Heldermann Verlag, Berlin, 1989.
\smallskip		
		\bibitem{fine}
			N.J.\ Fine and L.\ Gillman, {\it Extension of continuous functions in $\beta \N$}, Bull.\ Amer.\ Math.\ Soc.\ {\bf 66} (1960), 376--381. 
\smallskip			
		\bibitem{Frolik} 
			Z.\ Frol\'ik: {\it Sums of ultrafilters}, Bull.\ Amer.\ Math.\ Soc.\ {\bf 73} (1967), 87--91. 
\smallskip			
			\bibitem{Gillmann}
			L.\ Gillman, {\it The space $\beta \N$ and the continuum hypothesis}, in: General Topology and its Relations to Modern 				Analysis and Algebra, Proceedings of the second Prague topological symposium. Praha, 1967. 144--146.
\smallskip			
		\bibitem{Rings}
			L.\ Gillman and M. Jerison, {\it Rings of Continuous Functions}, Springer-Verlag, New York, 1976.
\smallskip						
		\bibitem{alphapseudo}
			J.F.\ Kennison, {\it $m$-Pseudocompactness}, Trans.\ Amer.\ Math.\ Soc.\ {\bf 104} (1962), 436--442.				
\smallskip						
		\bibitem{Kunen}
			K.\ Kunen, {\it Set Theory}, Elsevier Science, North Holland, 1980.
\smallskip	
		\bibitem{Intro}
			J.\ van Mill, {\it An Introduction to $\beta \w$}, in: Handbook of Set-Theoretic Topology. Eds.\ K.\ Kunen and J.E.\ Vaughan.\  Elsevier Science, 1984. 503--567.
\smallskip			
		\bibitem{negrepontis}
			S.\ Negrepontis, {\it The Stone space of the saturated Boolean algebras}, Trans.\ Amer.\ Math.\ Soc.\ {\bf 141} (1969), 515--527.
			
\smallskip		
		\bibitem{parov}
			I.I.\ Parovi\v{c}enko, {\it A universal bicompact of weight $\aleph$}, Soviet Math.\ Dokl.\ {\bf 4} (1963), 592--595.

\end{thebibliography}

\end{document}